\documentclass[12pt,leqno]{amsart}
\usepackage{enumerate}
\usepackage{amsrefs}
\usepackage{amsfonts, amsmath, amssymb, amscd, amsthm, bm, cancel}
\usepackage{url}
\usepackage{graphicx}
\usepackage[
linktocpage=true,colorlinks,citecolor=magenta,linkcolor=blue,urlcolor=magenta]{hyperref}
\usepackage{multicol}
\usepackage{comment}
\usepackage[margin=1in]{geometry}
\makeatletter
\@namedef{subjclassname@2020}{\textup{2020} Mathematics Subject Classification}
\makeatother
\parskip = 0.13cm

\theoremstyle{plain} \newtheorem{thm}{Theorem}[section]
\theoremstyle{plain} \newtheorem{prop}[thm]{Proposition}
\theoremstyle{plain} 
\theoremstyle{definition} 
\theoremstyle{plain} \newtheorem{cor}[thm]{Corollary}
\theoremstyle{plain} \newtheorem{lem}[thm]{Lemma}
\theoremstyle{plain} 
\theoremstyle{remark} \newtheorem{rmk}[thm]{Remark}
\theoremstyle{plain} \newtheorem{conj}{Conjecture}
 
 \newtheorem*{ack}{Acknowledgments}

 \numberwithin{equation}{section}
 \setcounter{tocdepth}{1} 
 \allowdisplaybreaks

\newcommand{\divg}{\mathrm{div}}

\newcommand{\bS}{\mathbb{S}}

\newcommand{\la}{\langle}
\newcommand{\ra}{\rangle}

\begin{document}
\title[Weinstock inequality in hyperbolic space]{Weinstock inequality in hyperbolic space}

\author{Pingxin Gu}
\address{Department of Mathematical Sciences, Tsinghua University, Beijing 100084, P.R. China}
\email{\href{mailto:gpx21@mails.tsinghua.edu.cn}{gpx21@mails.tsinghua.edu.cn}}

\author{Haizhong Li}
\address{Department of Mathematical Sciences, Tsinghua University, Beijing 100084, P.R. China}
\email{\href{mailto:lihz@tsinghua.edu.cn}{lihz@tsinghua.edu.cn}}

\author{Yao Wan}
\address{Department of Mathematics, The Chinese University of Hong Kong, Shatin, Hong Kong, P.R. China}
\email{\href{mailto:yaowan@cuhk.edu.hk}{yaowan@cuhk.edu.hk}}

\keywords{Weinstock inequality, Steklov eigenvalue, Inverse mean curvature flow, Hyperbolic space}
\subjclass[2020]{53C21; 35P15; 58C40}


\begin{abstract}
In this paper, we establish the Weinstock inequality for the first non-zero Steklov eigenvalue on star-shaped mean convex domains in hyperbolic space $\mathbb{H}^n$ for $n \geq 4$.  In particular, when the domain is convex, our result gives an affirmative answer to Open Question 4.27 in \cite{CGGS24} for the hyperbolic space $\mathbb{H}^n$ when $n \geq 4$.
\end{abstract}

\maketitle

\section{Introduction}\label{sec-1}

One of the oldest topics in spectral geometry is shape optimization. We focus on the first non-zero eigenvalue of the Steklov eigenvalue problem introduced by Steklov in the early $20$th century. We refer readers to the notable surveys \cites{GP17,CGGS24}.

Let $\Omega$ be a compact Riemannian manifold with Lipschitz boundary $\partial\Omega$. The Steklov eigenvalue problem on $\Omega$ is given by
\begin{align*}
    \left\{\begin{array}{ll}
    \Delta u=0,&\text{in $\Omega$},\\
    \frac{\partial u}{\partial \nu}=\sigma(\Omega)u,&\text{on $\partial\Omega$},
\end{array}\right.
\end{align*}
where $\Delta$ is the Laplacian operator on $\Omega$, and $\nu$ is the outward unit normal of $\partial\Omega$. We may compute the first non-zero Steklov eigenvalue of $\Omega$ by
\begin{align}\label{def-steklov}
\sigma_1(\Omega)=\min\left\{\frac{\int_{\Omega}|\nabla u|^2 dv}{\int_{\partial\Omega}u^2 d\mu}:\ u\in H^1(\Omega)\backslash \{0\},\int_{\partial\Omega}u d\mu=0\right\},
\end{align}
where $dv$ is the volume element of $\Omega$ and $d\mu$ is the area element of $\partial\Omega$.

The earliest isoperimetric type inequality on $\sigma_1(\Omega)$ is the well-known Weinstock's inequality, which states that among simply-connected planar domains $\Omega$ with a fixed perimeter $|\partial\Omega|$, $\sigma_1(\Omega)$ is maximized by a disk, see \cites{Wei54,Wei54-2}. It is also possible to maximize $\sigma_1(\Omega)$ by fixing the volume $|\Omega|$, see \cite{Esc99}, and \cite{Bro01} for higher dimensions, where the connectedness assumption can be removed in both cases.

The simply-connected assumptions in Weinstock inequality cannot be removed, since the result fails for appropriate spherical shell $\Omega_{\epsilon}=B(1)\backslash \overline{B(\epsilon)}$ ($B(r)$ denotes the geodesic ball of radius $r$ centered at the origin), see for example \cite{GP17}. In higher dimensions $n\geq 3$, even for the class of contractible domains, the ball is not a maximizer, see \cite{FS19}. However, when considering convex domains, using the inverse mean curvature flow (IMCF), Bucur, Ferone, Nitsch and Trombetti \cite{BFNT21} proved that

\begin{thm}[\cite{BFNT21}]\label{Thm-BFNT-3.1}
Let $\Omega$ be a bounded convex domain in $\mathbb{R}^n$. Then
\begin{align}\label{eq-thmbfnt}
    \sigma_1(\Omega)|\partial\Omega|^{\frac{1}{n-1}}\leq \sigma_1(B)|\partial B|^{\frac{1}{n-1}},
\end{align}
where $B\subset \mathbb{R}^n$ is a ball, and equality holds if and only if $\Omega$ is a ball.
\end{thm}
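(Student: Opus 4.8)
The plan is to combine the variational characterization \eqref{def-steklov} of $\sigma_1$ with a sharp geometric inequality for convex bodies obtained via the inverse mean curvature flow (IMCF). Since $\sigma_1(\Omega)$, $|\Omega|$ and $|\partial\Omega|$ are invariant under rigid motions, I first translate $\Omega$ so that its boundary centroid is at the origin, i.e.\ $\int_{\partial\Omega}x_i\,d\mu=0$ for $i=1,\dots,n$; for $\Omega$ convex this point lies in the interior of $\Omega$. A standard smoothing argument reduces us to the case of smooth strictly convex $\partial\Omega$, the general case following by approximation together with continuity of $\sigma_1(\Omega)$, $|\Omega|$, $|\partial\Omega|$ under Hausdorff convergence of convex bodies.

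The coordinate functions $x_1,\dots,x_n$ are harmonic with zero boundary mean, hence admissible in \eqref{def-steklov}, so $\sigma_1(\Omega)\int_{\partial\Omega}x_i^2\,d\mu\le\int_\Omega|\nabla x_i|^2\,dv=|\Omega|$; summing over $i$,
\[
\sigma_1(\Omega)\int_{\partial\Omega}|x|^2\,d\mu\ \le\ n|\Omega| .
\]
Since $\sigma_1(B)|\partial B|^{1/(n-1)}=\omega_{n-1}^{1/(n-1)}$ for every ball $B$, with $\omega_{n-1}=|\mathbb{S}^{n-1}|$, inequality \eqref{eq-thmbfnt} reduces to the geometric inequality
\[
\int_{\partial\Omega}|x|^2\,d\mu\ \ge\ \frac{n|\Omega|}{\omega_{n-1}^{1/(n-1)}}\,|\partial\Omega|^{1/(n-1)} .
\]
It is essential to keep $|\Omega|$ here: discarding it via Cauchy--Schwarz yields a pure perimeter inequality that is false in general (it already fails for cubes).

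To prove the displayed geometric inequality I would run the IMCF $\partial_t F=H^{-1}\nu$ from $\Sigma_0=\partial\Omega$. For smooth strictly convex data (more generally, star-shaped mean convex data) the flow exists for all $t\ge0$, the hypersurfaces $\Sigma_t$ remain star-shaped and mean convex, and after rescaling they converge to a round sphere; moreover $|\Sigma_t|=|\partial\Omega|e^t$, $\partial_t\,d\mu_t=d\mu_t$, $\partial_t|x|^2=2H^{-1}\langle x,\nu\rangle$, and $\tfrac{d}{dt}|\Omega_t|=\int_{\Sigma_t}H^{-1}\,d\mu_t$, where $\Omega_t$ denotes the enclosed region. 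Set
\[
\Phi(t):=\frac{n\,|\Omega_t|\,|\Sigma_t|^{1/(n-1)}}{\int_{\Sigma_t}|x|^2\,d\mu_t}.
\]
From the round limit one computes $\lim_{t\to\infty}\Phi(t)\le\omega_{n-1}^{1/(n-1)}$, so the geometric inequality follows from $\Phi(0)\le\lim_{t\to\infty}\Phi(t)$, which in turn holds provided $\Phi$ is nondecreasing. Logarithmic differentiation shows that $\Phi'\ge0$ is equivalent to the curvature inequality
\[
\frac{1}{|\Omega_t|}\int_{\Sigma_t}\frac{1}{H}\,d\mu_t\ \ge\ \frac{n-2}{n-1}+\frac{2\int_{\Sigma_t}H^{-1}\langle x,\nu\rangle\,d\mu_t}{\int_{\Sigma_t}|x|^2\,d\mu_t} .
\]

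The main obstacle is this curvature inequality. It is sharp on origin-centered spheres and is strictly stronger than the Heintze--Karcher inequality $\int_\Sigma H^{-1}\,d\mu\ge\frac{n}{n-1}|\Omega|$ alone: on nearly-flat parts of $\Sigma_t$ both $\int H^{-1}\,d\mu$ and $\int H^{-1}\langle x,\nu\rangle\,d\mu$ become large, and the inequality survives only through the precise way in which they balance, so it cannot be reduced to a pointwise estimate and requires a careful integral argument. I expect it to follow by combining the Hsiung--Minkowski identities $\int_\Sigma H\langle x,\nu\rangle\,d\mu=(n-1)|\Sigma|$ and $\int_\Sigma\langle x,\nu\rangle\,d\mu=n|\Omega|$, the pointwise bound $\langle x,\nu\rangle\le|x|$, and Cauchy--Schwarz, possibly packaged as a weighted Heintze--Karcher inequality. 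Granting it, monotonicity of $\Phi$ yields \eqref{eq-thmbfnt}. Finally, equality in \eqref{eq-thmbfnt} forces equality in the test-function step, so each $x_i$ is a first Steklov eigenfunction and $\partial_\nu x_i=\sigma_1(\Omega)\,x_i$ on $\partial\Omega$; reading this as $\nu=\sigma_1(\Omega)\,x$ shows $|x|\equiv\sigma_1(\Omega)^{-1}$ with $x\parallel\nu$ along $\partial\Omega$, i.e.\ $\Omega$ is a ball.
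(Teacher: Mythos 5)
This theorem is quoted from \cite{BFNT21}; the paper you were given does not prove it, so your proposal can only be measured against the known proofs (BFNT, Kwong--Wei) and against the hyperbolic analogue carried out in Section \ref{sec-4}. Your reductions are fine and are exactly the standard opening moves: centering the boundary centroid, using the coordinate functions to get $\sigma_1(\Omega)\int_{\partial\Omega}|x|^2\,d\mu\le n|\Omega|$, computing $\sigma_1(B)|\partial B|^{1/(n-1)}=\omega_{n-1}^{1/(n-1)}$, and reducing \eqref{eq-thmbfnt} to the sharp momentum inequality $\int_{\partial\Omega}|x|^2\,d\mu\ge n|\Omega|\,|\partial\Omega|^{1/(n-1)}\omega_{n-1}^{-1/(n-1)}$ (your observation that the volume must be kept, since the pure-perimeter version fails for cubes, is correct). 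The gap is that you never prove this momentum inequality: you reduce it to the monotonicity of $\Phi(t)$, i.e.\ to the ``curvature inequality''
\begin{equation*}
\frac{1}{|\Omega_t|}\int_{\Sigma_t}\frac{1}{H}\,d\mu_t\ \ge\ \frac{n-2}{n-1}+\frac{2\int_{\Sigma_t}H^{-1}\langle x,\nu\rangle\,d\mu_t}{\int_{\Sigma_t}|x|^2\,d\mu_t},
\end{equation*}
and then only ``expect'' it to follow from Hsiung--Minkowski, $\langle x,\nu\rangle\le|x|$, Cauchy--Schwarz and Heintze--Karcher. That expectation is unsubstantiated, and the natural way to use those tools fails: splitting off Heintze--Karcher ($\int_\Sigma H^{-1}\ge\frac{n}{n-1}|\Omega|$) would require $\int_\Sigma H^{-1}\langle x,\nu\rangle\,d\mu\le\frac{1}{n-1}\int_\Sigma|x|^2\,d\mu$, which is false for convex bodies: for a thin flattened body (a smoothed pancake) $H\to0$ on the two large nearly flat faces, so the left-hand side blows up while $\int_\Sigma|x|^2$ stays bounded. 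So whatever makes your curvature inequality true (if it is true for all flow hypersurfaces) is precisely the hard analytic content of the theorem, and it is missing; as it stands the proposal reduces the theorem to an unproven inequality of comparable difficulty.

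A concrete repair, which is the Euclidean shadow of the paper's own hyperbolic argument (the monotone quantity $A(t)$ in Section \ref{sec-4}, followed by Cauchy--Schwarz) and of Kwong--Wei: run IMCF but monitor the \emph{first} moment instead of the second, namely $\widetilde\Phi(t):=\int_{\Sigma_t}|x|\,d\mu_t\,/\,(|\Sigma_t|\,|\Omega_t|^{1/n})$. Its logarithmic derivative is $\int_{\Sigma_t}\frac{\langle x,\nu\rangle}{|x|H}\,d\mu_t\,/\int_{\Sigma_t}|x|\,d\mu_t-\int_{\Sigma_t}H^{-1}d\mu_t/(n|\Omega_t|)$, and this is $\le 0$ using only the pointwise bound $\langle x,\nu\rangle\le|x|$ and the divergence-theorem inequality $\int_{\Sigma_t}|x|\,d\mu_t\ge\int_{\Sigma_t}\langle x,\nu\rangle\,d\mu_t=n|\Omega_t|$ (the Euclidean analogue of Corollary \ref{cor-g}); no curvature inequality is needed. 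Gerhardt--Urbas asymptotics give $\lim_{t\to\infty}\widetilde\Phi(t)=(n/\omega_{n-1})^{1/n}$, hence $\frac{1}{|\Sigma|}\int_\Sigma|x|\,d\mu\ge\bigl(\frac{n|\Omega|}{\omega_{n-1}}\bigr)^{1/n}$; then Cauchy--Schwarz gives $\int_\Sigma|x|^2\,d\mu\ge|\Sigma|\bigl(\frac{n|\Omega|}{\omega_{n-1}}\bigr)^{2/n}$, and the classical isoperimetric inequality turns this into exactly your target momentum inequality, with equality only for balls. Your equality discussion via the eigenfunction property of the $x_i$ is fine once the inequality itself is in place.
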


Combining IMCF with elementary methods, Kwong and Wei \cite{KW23} proved sharp geometric inequalities involving three quantities, and extended Theorem \ref{Thm-BFNT-3.1} to a broader class of star-shaped mean convex domains. It is then natural to ask whether such a Weinstock type inequality holds in hyperbolic space, specifically

\begin{conj}[\cite{CGGS24}, Open Question 4.27]\label{conj}
Let $\Omega$ be a bounded convex domain in $\mathbb{H}^n$. Let $\Omega^*$ be a geodesic ball with $|\partial\Omega^*|=|\partial\Omega|$. Is it true that
\begin{equation}
    \sigma_1(\Omega)\leq\sigma_1(\Omega^*)
\end{equation}
and equality holds if and only if $\Omega$ is a geodesic ball?
\end{conj}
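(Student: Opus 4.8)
The plan is to adapt the inverse mean curvature flow (IMCF) method of Bucur--Ferone--Nitsch--Trombetti and Kwong--Wei to $\bH^n$; the essential new point is the choice of test functions, where the Euclidean coordinate functions are replaced by the radial profile of the first Steklov eigenfunction of a geodesic ball. Writing $\bH^n\setminus\{o\}\cong(0,\infty)\times\bS^{n-1}$ in geodesic polar coordinates about a point $o$, with metric $dr^2+\sinh^2 r\,g_{\bS^{n-1}}$, let $\psi$ be the solution of
\[
\psi''+(n-1)\coth r\,\psi'-\frac{n-1}{\sinh^2 r}\,\psi=0,\qquad \psi(0)=0,\quad \psi'(0)=1 ,
\]
which satisfies $\psi>0$ and $\psi'>0$ on $(0,\infty)$; then the functions $u_i:=\psi(r)\,\omega_i$ ($i=1,\dots,n$, with $\omega_i$ the $i$-th component of the unit direction) are harmonic on $\bH^n$ and are precisely the first non-zero Steklov eigenfunctions of every geodesic ball $B_\rho(o)$, with $\sigma_1(B_\rho(o))=\psi'(\rho)/\psi(\rho)$.

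First I would perform a Hersch-type balancing: the map sending $o\in\bH^n$ to $\big(\int_{\partial\Omega}\psi(r_o)\,\omega_i^o\,d\mu\big)_{i=1}^{n}\in\bR^n$ extends continuously to the ideal boundary, where it is a nonzero multiple of the identity of $\bS^{n-1}$, so by a degree argument it vanishes somewhere; fix such a center $o$, so that $\int_{\partial\Omega}u_i\,d\mu=0$ for all $i$. Summing the variational characterization \eqref{def-steklov} over $i$ and using $\sum_i\omega_i^2=1$ and $\sum_i|\nabla_{\bS^{n-1}}\omega_i|^2=n-1$ gives
\[
\sigma_1(\Omega)\ \le\ \frac{\displaystyle\int_{\Omega}\Big(\psi'(r)^2+\frac{n-1}{\sinh^2 r}\psi(r)^2\Big)dv}{\displaystyle\int_{\partial\Omega}\psi(r)^2\,d\mu}\ =:\ \frac{N(\Omega)}{D(\Omega)} ,
\]
with equality when $\Omega$ is a geodesic ball centered at $o$. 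Since $\psi'^2+\tfrac{n-1}{\sinh^2 r}\psi^2=\divg\!\big(\psi\psi'\,\partial_r\big)$, the divergence theorem gives $N(\Omega)=\int_{\partial\Omega}\psi\psi'\,\la\partial_r,\nu\ra\,d\mu$. Hence it is enough to prove the geometric inequality $N(\Omega)\le\sigma_1(\Omega^*)\,D(\Omega)$, equivalently that
\[
\mathcal Q(\Omega):=\frac{N(\Omega)}{f(|\partial\Omega|)\,D(\Omega)}\ \le\ 1 ,
\]
where $f(a)$ is $\sigma_1$ of the geodesic ball whose boundary has area $a$, so $f(|\partial\Omega|)=\sigma_1(\Omega^*)$; a short computation using the ODE shows $\mathcal Q=1$ for geodesic balls centered at $o$.

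The core of the argument is an IMCF monotonicity for $\mathcal Q$. Since $\Omega$ is star-shaped and mean convex, the flow $\partial_t X=H^{-1}\nu$ with $M_0=\partial\Omega$ exists for all time, remains star-shaped and mean convex, and converges to a geodesic sphere (the hyperbolic analogue of the Gerhardt--Urbas theorem); along it $|M_t|=|\partial\Omega|e^{t}\to\infty$, and a limiting computation on large geodesic spheres yields $\mathcal Q(\Omega_t)\to1$. I would then prove $\tfrac{d}{dt}\mathcal Q(\Omega_t)\ge0$: substituting the evolution equations $\tfrac{d}{dt}d\mu=d\mu$, $\tfrac{d}{dt}|M_t|=|M_t|$, $\tfrac{d}{dt}D(\Omega_t)=\int_{M_t}\big(\psi^2+2\psi\psi'\la\partial_r,\nu\ra/H\big)d\mu$ and $\tfrac{d}{dt}N(\Omega_t)=\int_{M_t}H^{-1}\big(\psi'^2+\tfrac{n-1}{\sinh^2 r}\psi^2\big)d\mu$ into $\tfrac{d}{dt}\log\mathcal Q(\Omega_t)$ and simplifying with the ODE, the claim reduces to an inequality among boundary integrals on $M_t$ that follows from elementary integral inequalities (Cauchy--Schwarz together with a Heintze--Karcher-type lower bound for $\int_{M_t}H^{-1}$). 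Combining the three steps, $\sigma_1(\Omega)\le N(\Omega)/D(\Omega)\le\sigma_1(\Omega^*)$.

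The main obstacle is exactly this monotonicity $\tfrac{d}{dt}\mathcal Q(\Omega_t)\ge0$: passing to $\bH^n$ produces extra terms in the evolution of the mean curvature coming from the ambient curvature (the Ricci term $\Ric(\nu,\nu)=-(n-1)$), and keeping their contributions of the right sign through the reduction succeeds only for $n\ge4$, which is where the dimensional restriction originates; the long-time existence and round-convergence of hyperbolic IMCF for star-shaped mean convex data is a second, more standard, ingredient. For the rigidity statement, equality forces equality in the Heintze--Karcher/Cauchy--Schwarz step along the whole flow, so each $M_t$ is totally umbilic with $\la\partial_r,\nu\ra\equiv1$, hence a geodesic sphere centered at $o$; tracing back, $\partial\Omega$ is a geodesic sphere and $\Omega$ a geodesic ball, while conversely geodesic balls realize equality throughout.
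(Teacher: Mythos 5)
Your setup is sound and in fact coincides with the paper's: your radial profile $\psi$ is (up to the constant $n$) exactly the volume--area ratio $g(r)=|B(r)|/|S(r)|$ used there, the balanced test functions $\psi(r)\omega_i$ give the same upper bound $\sigma_1(\Omega)\int_{\partial\Omega}g^2d\mu\le\int_\Omega\bigl((g')^2+(n-1)\tfrac{g^2}{\lambda^2}\bigr)dv$, and the target is the same geometric inequality $N(\Omega)\le\sigma_1(\Omega^*)D(\Omega)$. The gap is the step you declare to be the core: the claimed monotonicity $\tfrac{d}{dt}\mathcal Q(\Omega_t)\ge 0$ along IMCF is asserted, not proved, and the reduction you sketch (``Cauchy--Schwarz plus a Heintze--Karcher-type lower bound for $\int_{M_t}H^{-1}$'') does not obviously close. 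Writing out $\tfrac{d}{dt}\log\mathcal Q$ one gets $\tfrac1N\int_{M_t}\tfrac{1}{H}\bigl((g')^2+(n-1)\tfrac{g^2}{\lambda^2}\bigr)d\mu-1-\tfrac{2}{D}\int_{M_t}\tfrac{gg'\la\partial_r,\nu\ra}{H}d\mu-\tfrac{d}{dt}\log f(|M_t|)$: you need a \emph{lower} bound on one $H^{-1}$-weighted integral normalized by $N=\int_{M_t}gg'\la\partial_r,\nu\ra d\mu$, an \emph{upper} bound on another normalized by $D=\int_{M_t}g^2d\mu$, and a comparison of both with the purely area-dependent term $\tfrac{d}{dt}\log f(|M_t|)$; the mismatched radial weights and normalizations are exactly what elementary inequalities do not deliver, and Heintze--Karcher points in the wrong direction for the $D'$ term. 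This is precisely where the published proof departs from your plan: it never flows the Rayleigh-type quotient itself. Instead it flows the quantity $A(t)=|\Sigma_t|^{-1}\int_{\Sigma_t}g\,d\mu_t\big/\bigl(|\Omega_t|\,h(\int_{\Omega_t}\tfrac{\lambda'g}{\lambda}dv_t)\bigr)$, built from the \emph{first} power of $g$ and an auxiliary log-convex function $h$ defined by $h(\int_{B(t)}\tfrac{\lambda'g}{\lambda}dv)=|S(t)|^{-1}$; there the $H^{-1}$-integrals enter with the \emph{same} weight $g'$ and cancel using only $\la\partial_r,\nu\ra\le1$, $\int_\Sigma g\,d\mu\ge|\Omega|$, log-convexity of $h$, and mass transplantation. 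This yields only a lower bound on $\int_\Sigma g^2d\mu$; the numerator $N(\Omega)$ is then controlled by a completely separate mass-transplantation argument (monotonicity in $r$ of $(g')^2+(n-1)\tfrac{g^2}{\lambda^2}+\tfrac{2(n-1)}{n}\tfrac{\lambda'g}{\lambda}$ for $n\ge5$, with an explicit variant for $n=4$), a Mean Value Theorem estimate for powers of $h$, the monotonicity of $|S(r)|\sigma_1(B(r))$ (resp.\ $|S(r)|^{2/3}\sigma_1(B(r))$), and the isoperimetric inequality.

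A secondary but telling point: your diagnosis that the restriction $n\ge4$ comes from Ricci terms in the evolution of the mean curvature is not where the obstruction lies. A correct proof never needs the evolution equation of $H$ (only $\partial_t d\mu_t=d\mu_t$, the coarea formula, and Gerhardt's asymptotics); the dimensional constraint originates in the analytic behavior of the profile itself, e.g.\ $\lim_{r\to\infty}\lambda^2g'=\tfrac1{n-3}$ and the bound $\tfrac{\lambda\lambda'g'}{g}\le\tfrac{n-1}{n-3}$, which are what make the transplantation lemmas and the monotonicity of $|S(r)|\sigma_1(B(r))$ work. So even if you intend to pursue your direct monotonicity of $\mathcal Q$, you should expect the dimension hypothesis to enter through properties of $\psi=ng$ rather than through curvature terms in the flow, and you must supply an actual proof of the monotonicity (or replace it, as the paper does, by a monotone surrogate plus transplantation); as it stands the proposal leaves the decisive inequality unestablished. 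Minor items to tidy if you continue: the balancing center $o$ and the star-shapedness center used for IMCF need to be reconciled, and the limit $\mathcal Q(\Omega_t)\to1$ should be justified via the graphical asymptotics of Gerhardt together with a Mean Value Theorem argument, as in the paper.
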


For $n=2$, the conjecture holds, as the Weinstock inequality is valid for the hyperbolic plane and, more generally, for compact surfaces, see \cite{FS11}. There are also many researches devoted in estimating general eigenvalues on general surfaces, see also \cites{GP12,KV14,YY17,Kar17}.

For higher dimensions, to the authors knowledge, the conjecture remains open. But if we turn to the case of fixing volume $|\Omega|$, then the geodesic ball is maximizer, see \cite{BS14}. When $\Omega$ is convex, this result can be a corollary of Conjecture \ref{conj} by means of isoperimetric inequality, see Remark \ref{rmk-2}.

In this paper, we prove the Weinstock inequality for smooth domains with star-shaped mean convex boundary in $\mathbb{H}^n$ for general dimension $n\geq 4$.

\begin{thm}\label{main thm}
Let $n\geq 4$, $\Omega$ be a smooth bounded domain in $\mathbb{H}^n$ with star-shaped mean convex boundary $\partial\Omega$, and $\Omega^{\ast}$ be a geodesic ball with $|\partial\Omega^*|=|\partial\Omega|$. Then
\begin{equation}\label{main-equ}
    \sigma_1(\Omega)\leq \sigma_1(\Omega^{\ast})
\end{equation}
and equality holds if and only if $\Omega$ is a geodesic ball.
\end{thm}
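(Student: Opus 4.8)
The plan is to follow the inverse mean curvature flow (IMCF) strategy of Bucur–Ferone–Nitsch–Trombetti and Kwong–Wei, adapted to the hyperbolic ambient geometry. The scheme has three ingredients: (i) a variational test-function bound for $\sigma_1(\Omega)$ using coordinate-type functions; (ii) monotonicity along IMCF of a suitable curvature-weighted boundary integral that controls the test-function Rayleigh quotient; and (iii) a reduction of the eigenvalue of the geodesic ball $\Omega^*$ to the same geometric quantity. Concretely, for the upper bound on $\sigma_1(\Omega)$ I would use the conformal/coordinate functions coming from the embedding $\mathbb{H}^n \hookrightarrow \mathbb{R}^{n,1}$ (or the ball model), pick $n$ harmonic extensions $u_i$ whose boundary traces are (a barycentre-normalized version of) these coordinates, and sum the resulting Rayleigh quotients. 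After a Hersch-type centering argument (using a Möbius/conformal automorphism of $\mathbb{H}^n$ to arrange $\int_{\partial\Omega} x_i\, d\mu = 0$), one gets
\begin{equation}\label{eq-testfn}
\sigma_1(\Omega)\sum_{i=1}^{n}\int_{\partial\Omega} u_i^2\, d\mu \;\le\; \sum_{i=1}^{n}\int_{\Omega} |\nabla u_i|^2\, dv,
\end{equation}
and the right-hand side is estimated by the Dirichlet energy of the coordinate functions themselves (harmonicity minimizes Dirichlet energy), which reduces to an explicit boundary integral of the type $\int_{\partial\Omega} f(r)\, d\mu$ or $\int_{\partial\Omega} H_\Omega^{-1}\,d\mu$ against geometric weights.

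The heart of the argument is Step (ii). I would flow $\partial\Omega$ by IMCF, $\partial_t X = H^{-1}\nu$; since $\partial\Omega$ is star-shaped and mean convex this flow exists for all time, stays star-shaped, becomes convex, and converges (after rescaling) to a geodesic sphere, by the results of Gerhardt and Urbas in the hyperbolic setting. Along the flow $|\partial\Omega_t|$ grows like $e^{t}$ (up to the hyperbolic correction), so matching $|\partial\Omega^*| = |\partial\Omega|$ corresponds to comparing with the initial time. I would then show that the scale-invariant functional
\begin{equation}\label{eq-monoton}
\mathcal{Q}(t) \;=\; \Big(\text{RHS of }\eqref{eq-testfn}\text{ for }\Omega_t\Big)\Big/\Big(\text{a power of }|\partial\Omega_t|\Big)
\end{equation}
is monotone non-increasing in $t$ (or non-decreasing — whichever direction sends $\Omega$ toward the round model), with equality only on geodesic balls. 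This uses the evolution equations for $H$, $d\mu$, and $|\nabla r|$ under IMCF together with the Gauss equation in $\mathbb{H}^n$ (constant curvature $-1$), plus a Heintze–Karcher- or Minkowski-type inequality in hyperbolic space to handle the extra curvature terms; the dimensional restriction $n\ge 4$ should enter exactly here, in the sign of a curvature term in the monotonicity computation (as it does in Kwong–Wei). Finally, for Step (iii), since $\sigma_1$ of a geodesic ball is realized by coordinate-type eigenfunctions, the inequality $\mathcal{Q}(\infty) = $ value at the round sphere equals $\sigma_1(\Omega^*)|\partial\Omega^*|^{\text{power}}$, so chaining $\sigma_1(\Omega)|\partial\Omega|^{\text{power}} \le \mathcal{Q}(0) \le \mathcal{Q}(\infty) = \sigma_1(\Omega^*)|\partial\Omega^*|^{\text{power}}$ gives \eqref{main-equ}.

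The main obstacle I anticipate is the monotonicity in Step (ii): in $\mathbb{H}^n$ the Dirichlet energy of the coordinate functions is not conformally as clean as in $\mathbb{R}^n$, and the IMCF evolution of the relevant boundary integral picks up terms involving both $H$, $|\mathring{A}|^2$ (the traceless second fundamental form), and the ambient curvature $-1$; controlling the sign of the net expression will require either an integrated Cauchy–Schwarz/Newton-inequality argument ($H^2 \ge \frac{n-1}{n-2}|A|^2$-type bounds on convex pieces) or an auxiliary hyperbolic geometric inequality, and this is where the condition $n \ge 4$ is genuinely used. A secondary technical point is the equality case: one must trace equality back through the Hersch centering (forcing the support function to be radial), through the monotonicity (forcing $\mathring{A}\equiv 0$, i.e. umbilical), and through convergence, to conclude $\Omega$ is a geodesic ball; this is routine once the monotonicity is established with a sharp equality characterization.
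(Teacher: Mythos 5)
Your outline reproduces the general philosophy (test functions plus IMCF monotonicity), but as written it has two genuine gaps, and the anticipated mechanisms do not survive in the hyperbolic setting. First, the final chaining step fails because $\mathbb{H}^n$ has no scale invariance: if you normalize by a power of $|\partial\Omega_t|$ and let $t\to\infty$, the limit of $\mathcal{Q}(t)$ is the value associated with \emph{infinitely large} geodesic spheres, and since $\sigma_1(B(r))\,|S(r)|^{p}$ is genuinely $r$-dependent in hyperbolic space (it is increasing in $r$, cf.\ Lemma \ref{lem-ln-1g'/g}), this limit is \emph{not} equal to $\sigma_1(\Omega^*)\,|\partial\Omega^*|^{p}$ for the ball $\Omega^*$ with $|\partial\Omega^*|=|\partial\Omega|$. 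In the Euclidean argument of Bucur--Ferone--Nitsch--Trombetti this identification is exactly what scale invariance provides, and it is precisely the point where the hyperbolic case requires new input. The paper's route is different: the IMCF monotonicity is applied to the quantity $A(t)=|\Sigma_t|^{-1}\int_{\Sigma_t}g\,d\mu_t\,\big/\big(|\Omega_t|\,h(\int_{\Omega_t}\tfrac{\lambda' g}{\lambda}dv_t)\big)$ built from the volume--area ratio $g(r)=|B(r)|/|S(r)|$ and an auxiliary function $h$ calibrated so that the limit at infinity can be evaluated; this yields $\int_{\Sigma}g^2\,d\mu\ge|\Sigma|\,|\Omega|^2 h(\cdot)^2$, which is then compared with the ball $B(R)$ of the \emph{same volume} via mass transplantation and a Mean Value Theorem estimate on $h^2$ (or $h^{4/3}$ when $n=4$), giving $|\Sigma|\sigma_1(\Omega)\le|S(R)|\sigma_1(B(R))$; only at the end does the isoperimetric inequality together with the monotonicity of $|S(r)|\sigma_1(B(r))$ transfer the bound from $B(R)$ to $\Omega^*$. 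Your sketch contains no substitute for this bridge between the same-volume ball and the same-area ball.

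Second, your choice of test functions and your prediction of where $n\ge4$ enters are both off target in ways that matter. Harmonic extensions of ambient (Minkowski or ball-model) coordinates, even after a Hersch-type conformal centering, do not restrict to the first Steklov eigenfunctions of a hyperbolic geodesic ball, so the resulting Rayleigh-quotient bound cannot be sharp at the ball and the equality case is lost; the paper instead uses the functions $g(r)\frac{x_i}{r}$ (which \emph{are} the ball eigenfunctions, since $\sigma_1(B(r))=g'(r)/g(r)$) together with a weighted center-of-mass theorem, not a conformal normalization. Moreover, the IMCF monotonicity in the paper holds in all dimensions and involves no $|\mathring{A}|^2$, Gauss-equation, Heintze--Karcher or Minkowski-type terms -- it only uses $\langle\partial_r,\nu\rangle\le1$, divergence identities for $g$, mass transplantation and log-convexity of $h$. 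The restriction $n\ge4$ arises elsewhere, from the elementary bound $\lambda\lambda' g'/g\le\frac{n-1}{n-3}$ (Proposition \ref{prop-ll'g'/g}), which is what makes the integrand $(g')^2+(n-1)\frac{g^2}{\lambda^2}+\frac{2(n-1)}{n}\frac{\lambda' g}{\lambda}$ monotone so that mass transplantation applies (with a separate explicit computation for $n=4$). So the step you flag as the ``main obstacle'' is indeed the crux, but the curvature-flow machinery you propose to overcome it is not the mechanism that works; without the auxiliary function $h$, the same-volume comparison, and the $r$-monotonicity of $|S(r)|\sigma_1(B(r))$, the argument does not close.
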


In particular, when the domain is convex, we provide an affirmative answer to Conjecture \ref{conj} for $n\geq 4$.

\begin{cor}
     Conjecture \ref{conj} is true for $n\geq 4$.
\end{cor}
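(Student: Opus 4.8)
The plan is to obtain the corollary as a direct specialization of Theorem~\ref{main thm}, by checking that any smooth bounded convex domain $\Omega\subset\mathbb{H}^n$ already lies in the class of star-shaped mean convex domains treated there. First I would verify star-shapedness: since $\Omega$ is geodesically convex, the geodesic segment joining any two of its points stays inside $\Omega$, so fixing an interior point $o$, every geodesic issuing from $o$ meets $\partial\Omega$ exactly once; hence $\partial\Omega$ is star-shaped about $o$ and admits a radial graph parametrization over a geodesic sphere. Next I would verify mean convexity: convexity means the second fundamental form of $\partial\Omega$ with respect to the outward normal is positive semi-definite, so all principal curvatures satisfy $\kappa_i\ge 0$ and thus $H=\sum_i\kappa_i\ge 0$; when $\Omega$ is in addition strictly convex one has $H>0$, so $\partial\Omega$ is mean convex in the sense needed to run the inverse mean curvature flow. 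Consequently every smooth strictly convex domain is a smooth star-shaped mean convex domain, and Theorem~\ref{main thm} applies verbatim to give both the inequality $\sigma_1(\Omega)\le\sigma_1(\Omega^{\ast})$ and the rigidity characterization, which is exactly the assertion of Conjecture~\ref{conj} in this case.

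To cover an arbitrary bounded convex domain $\Omega$ (which need only have Lipschitz boundary for the Steklov problem \eqref{def-steklov} to be well posed), I would pass through an approximation. One can find smooth strictly convex domains $\Omega_k$ converging to $\Omega$ in the Hausdorff distance—for instance by smoothing slightly shrunk inner level sets, keeping convexity at every stage—and apply the previous paragraph to each $\Omega_k$ to get $\sigma_1(\Omega_k)\le\sigma_1(\Omega_k^{\ast})$, where $\Omega_k^{\ast}$ is the geodesic ball with $|\partial\Omega_k^{\ast}|=|\partial\Omega_k|$. Letting $k\to\infty$, Hausdorff convergence of convex bodies forces $|\partial\Omega_k|\to|\partial\Omega|$, whence $\Omega_k^{\ast}\to\Omega^{\ast}$ and, by the explicit strictly monotone dependence of the first Steklov eigenvalue of a geodesic ball on its radius, $\sigma_1(\Omega_k^{\ast})\to\sigma_1(\Omega^{\ast})$. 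Combined with the convergence $\sigma_1(\Omega_k)\to\sigma_1(\Omega)$, this yields $\sigma_1(\Omega)\le\sigma_1(\Omega^{\ast})$ in the limit.

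The two points demanding genuine care are the stability of the Steklov eigenvalue and the equality case. For the former, I would prove $\sigma_1(\Omega_k)\to\sigma_1(\Omega)$ by showing that the Dirichlet energies and boundary traces in the Rayleigh quotient \eqref{def-steklov} vary continuously under the uniform radial-graph convergence of the $\Omega_k$; this is the step I expect to be the main obstacle, since the trace term $\int_{\partial\Omega_k}u^2\,d\mu$ requires uniform control of the boundary measures. For the rigidity, equality is immediate for smooth strictly convex $\Omega$ directly from Theorem~\ref{main thm}, while for a general convex domain I would argue that any deviation of $\Omega$ from a geodesic ball produces a strict deficit in the approximating inequalities that persists in the limit, forcing $\Omega$ to be a geodesic ball when $\sigma_1(\Omega)=\sigma_1(\Omega^{\ast})$.
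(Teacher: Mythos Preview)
Your proposal is correct and follows the same route as the paper: the corollary is deduced directly from Theorem~\ref{main thm} once one observes that a convex domain is star-shaped about any interior point and has nonnegative second fundamental form, hence is mean convex. Your approximation argument for general (non-smooth) convex domains and your attention to the rigidity case are in fact more careful than the paper, which treats the corollary as immediate and does not address either issue explicitly.
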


\begin{rmk}\label{rmk-1}
Let $I_1(t)=|\partial B(t)|$ and $J(t)=\sigma_1(B(t))$ for $t>0$. Then (\ref{main-equ}) is equivalent to the following isoperimetric type inequality
\begin{equation}\label{main-equ2}
    \frac{\sigma_1(\Omega)}{J\circ I_1^{-1}(|\partial\Omega|)}\leq \left.\frac{\sigma_1(\Omega)}{J\circ I_1^{-1}(|\partial\Omega|)}\right|_{\text{$\Omega$ is a geodesic ball}}=1.
\end{equation}
\end{rmk}

Our method is inspired by the use of IMCF as presented in \cite{KW23} and by the well-known mass transplantation argument, as discussed in \cite{FL21}. The application of IMCF is not straightforward, as we must introduce a special auxiliary function $h$ in \eqref{def-h}. The evolution of this function under IMCF establishes a useful geometric inequality involving $\int_{\partial\Omega} g(r)^2d\mu$. By the same estimate, as a byproduct, we can extend Verma's isoperimetric inequality for the harmonic mean of the first $(n-1)$ non-zero Steklov eigenvalues \cite{Ver21} with prescribed area. We state it as the following Corollary \ref{main cor}, which can lead to Theorem \ref{main thm} again.

\begin{cor}\label{main cor}
Let $n\geq 4$, $\Omega$ be a smooth bounded domain in $\mathbb{H}^n$ with star-shaped mean convex boundary $\partial\Omega$, and $\Omega^{\ast}$ be a geodesic ball with $|\partial\Omega^*|=|\partial\Omega|$. Then
\begin{equation}
    \sum_{i=1}^{n-1}\frac{1}{\sigma_i(\Omega)}\geq \sum_{i=1}^{n-1}\frac{1}{\sigma_i(\Omega^{\ast})}
\end{equation}
and equality holds if and only if $\Omega$ is a geodesic ball.
\end{cor}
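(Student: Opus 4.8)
The plan is to mirror the variational/trial-function strategy that yields Theorem \ref{main thm}, but now exploiting the full first $(n-1)$-dimensional block of coordinate-type test functions at once rather than a single one. Write $\Omega^\ast = B(R)$ with $|\partial B(R)| = |\partial\Omega|$, and recall that on a geodesic ball the first $(n-1)$ nonzero Steklov eigenvalues coincide, $\sigma_1(\Omega^\ast) = \dots = \sigma_{n-1}(\Omega^\ast) = J(R)$, with eigenfunctions of the form $\phi_i(r,\theta) = g(r)\,\Theta_i(\theta)$, where $\Theta_i$ are the restrictions of the ambient coordinate functions to the geodesic sphere and $g$ solves the associated radial ODE. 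The key algebraic fact is the min-max characterization of $\sum_{i=1}^{n-1}\sigma_i^{-1}$: if $u_1,\dots,u_{n-1}\in H^1(\Omega)$ are any functions that are $L^2(\partial\Omega)$-orthonormal and each has zero boundary mean, then one does \emph{not} directly get a bound on the sum of reciprocals; instead one uses the dual (Dirichlet-to-Neumann) formulation. Concretely, $\sum_{i=1}^{n-1}\sigma_i(\Omega)^{-1} = \max \sum_{i=1}^{n-1}\langle f_i, \mathcal D\mathcal N^{-1} f_i\rangle$ over $L^2(\partial\Omega)$-orthonormal systems $\{f_i\}$ orthogonal to constants, equivalently $\sum_{i=1}^{n-1}\sigma_i(\Omega)^{-1} \geq \sum_{i=1}^{n-1} \langle f_i, \mathcal D\mathcal N^{-1} f_i\rangle$ for \emph{any} such system. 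Choosing $f_i$ to be (a recentered version of) the ambient coordinate functions restricted to $\partial\Omega$ and bounding $\langle f_i,\mathcal D\mathcal N^{-1}f_i\rangle$ from below by the Dirichlet energy of a convenient extension is the route: take the extension $w_i = h(r)\,\Theta_i$ built from the same auxiliary profile $h$ introduced in \eqref{def-h}, so that $\mathcal D\mathcal N^{-1}f_i$ minimizing property gives $\langle f_i,\mathcal D\mathcal N^{-1}f_i\rangle \geq \bigl(\int_{\partial\Omega} f_i w_i\,d\mu\bigr)^2 / \int_\Omega |\nabla w_i|^2\,dv$.

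Next I would carry out the following steps. \textbf{Step 1 (centering).} Translate the ambient coordinates so that $\int_{\partial\Omega}\Theta_i\,d\mu = 0$ for all $i$; this uses the same center-of-mass argument as in the proof of Theorem \ref{main thm} and is where star-shapedness relative to the chosen origin is used. \textbf{Step 2 (energy bound via IMCF).} Summing over $i$ and using $\sum_i \Theta_i^2 = $ (an explicit function of $r$ on the relevant warped product, equal to $\sinh^2 r$ up to normalization when $\Theta_i$ are the $\mathbb H^n\hookrightarrow \mathbb R^{n,1}$ spatial coordinates suitably scaled — in fact $\sum_i \Theta_i^2$ restricted to $\partial\Omega$ is controlled by the geometry), reduce $\sum_i \int_\Omega|\nabla w_i|^2\,dv$ and $\sum_i \bigl(\int_{\partial\Omega} f_i w_i\,d\mu\bigr)^2$ to integrals of the radial profiles against $\int_{\partial\Omega} g(r)^2\,d\mu$-type quantities. \textbf{Step 3 (the geometric inequality).} Invoke exactly the monotonicity-under-IMCF estimate that the paper advertises — "the evolution of [$h$] under IMCF establishes a useful geometric inequality involving $\int_{\partial\Omega} g(r)^2\,d\mu$" — to compare these radial integrals over $\partial\Omega$ with the corresponding integrals over $\partial\Omega^\ast = \partial B(R)$, using $|\partial\Omega| = |\partial\Omega^\ast|$. \textbf{Step 4 (assembling).} On $\partial B(R)$ the trial functions $w_i = h(r)\Theta_i$ become, after the right choice of $h$, proportional to the genuine eigenfunctions, so the Rayleigh-type quotients are exactly $1/J(R) = 1/\sigma_i(\Omega^\ast)$; combining with Steps 2–3 gives $\sum_i \sigma_i(\Omega)^{-1} \geq (n-1)/J(R) = \sum_i \sigma_i(\Omega^\ast)^{-1}$.

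For the equality case, I would trace back through the chain: equality forces equality in the IMCF monotonicity of Step 3, which (as in \cite{KW23} and in the proof of Theorem \ref{main thm}) forces $\partial\Omega$ to be totally umbilic, hence a geodesic sphere; one then checks the center is the origin, so $\Omega$ is a geodesic ball. Finally, to see that Corollary \ref{main cor} "can lead to Theorem \ref{main thm} again," note $\sigma_1(\Omega) \leq \sigma_i(\Omega)$ for $i \leq n-1$ gives $(n-1)/\sigma_1(\Omega) \geq \sum_i \sigma_i(\Omega)^{-1} \geq (n-1)/\sigma_1(\Omega^\ast)$, hence $\sigma_1(\Omega)\leq\sigma_1(\Omega^\ast)$.

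The main obstacle I anticipate is \textbf{Step 3}, i.e. producing the right monotone quantity along IMCF. In hyperbolic space the warping function $\sinh r$ and the extra curvature terms mean the naive monotone quantity from the Euclidean case is not monotone; this is precisely why the auxiliary profile $h$ of \eqref{def-h} (rather than $r$ or a power of a coordinate) must be tuned so that the unwanted terms in $\frac{d}{dt}$ of the flow quantity have a definite sign — and it is exactly here that the dimension restriction $n \geq 4$ is expected to enter, through the sign of a coefficient depending on $n$. A secondary technical point is justifying the Dirichlet-to-Neumann dual formulation and the admissibility of the (only $H^1$, not smooth) extensions $w_i$ on a merely star-shaped mean convex domain, and checking that the sum $\sum_i \langle f_i,\mathcal D\mathcal N^{-1} f_i\rangle$ is indeed bounded below by $\sum_i$ of the individual Rayleigh quotients even though the $f_i$ need not be eigenfunctions — this follows from the variational (trace) characterization of $\sum_{i=1}^{n-1}\sigma_i^{-1}$ but must be stated carefully.
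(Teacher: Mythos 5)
Your overall architecture is close to the paper's: recenter coordinate-type test functions by a center-of-mass argument, feed them into a variational characterization of $\sum_{i=1}^{n-1}\sigma_i^{-1}$, and then compare the resulting radial integrals with the ball of the same boundary area via the IMCF estimate. Indeed the paper's proof is exactly a two-step assembly: the Verma-type inequality \eqref{eq-cor-1} (cited from \cite{Ver21}) plus the estimate \eqref{eq-cor-2}, which was already established inside the proof of Theorem \ref{main thm}, together with $\sigma_1(\Omega^\ast)=\cdots=\sigma_{n-1}(\Omega^\ast)$. However, your proposal has a genuine gap at the variational step. Your Ky Fan/Dirichlet-to-Neumann bound requires an $L^2(\partial\Omega)$-orthonormal system of $n-1$ functions orthogonal to constants, while the recentered coordinate functions are $n$ in number and not orthonormal on a general $\Omega$; if you keep only $n-1$ of them, $\sum_{i=1}^{n-1}\Theta_i^2$ is no longer a radial function and your Step 2 reduction collapses, and if you keep all $n$ and sum Rayleigh quotients you only bound $\sum_{i=1}^{n}\sigma_i^{-1}$, which does \emph{not} imply the claimed bound on $\sum_{i=1}^{n-1}\sigma_i^{-1}$ (that loss is exactly the content of the corollary). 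The passage from $n$ coordinate functions to the first $n-1$ reciprocals with the factor $\frac{1}{n-1}$, i.e. inequality \eqref{eq-cor-1}, is Verma's Hersch--Payne--Schiffer/Brock-type orthogonalization argument, which you neither reproduce nor delegate; without it your Step 4 does not reach the stated conclusion.

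A second concrete problem is your use of the auxiliary function $h$ of \eqref{def-h} as the radial profile of the trial functions. In the paper $h$ is not a profile in $r$ at all: it is defined as a function of the volume integral $\int_{B(t)}\frac{\lambda' g}{\lambda}\,dv$, and its sole role is to normalize the IMCF monotone quantity $A(t)$, yielding the geometric inequality \eqref{eq-intg2-imcf}. The profile that makes the computation close is $g$: with $u_i=g(r)\frac{x_i}{r}$ one has $\sum_i u_i^2=g^2$ and $\sum_i|\nabla u_i|^2=(g')^2+(n-1)\frac{g^2}{\lambda^2}$, and on the ball the relevant quotient is exactly $\sigma_1(B(R))=\frac{g'(R)}{g(R)}$ by \eqref{eq-sigmabr}--\eqref{eq-sigmabr2}; with $h(r)\Theta_i$ neither the boundary term nor the energy reduces to the quantities controlled by Corollary \ref{cor-int-g}, Proposition \ref{prop-g2-n=4}, and Lemma \ref{main lem}. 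Two smaller inaccuracies: the restriction $n\geq 4$ does not come from the IMCF monotonicity (which holds in every dimension) but from Proposition \ref{prop-ll'g'/g}, the limit \eqref{eq-g-lim2}, and Lemmas \ref{lem-n=5} and \ref{lem-n=4}; and the equality case in the paper is settled by the rigidity of the isoperimetric inequality rather than by umbilicity along the flow. Once the profile is corrected to $g$ and \eqref{eq-cor-1} is supplied (from \cite{Ver21}), the corollary follows from \eqref{eq-cor-2} exactly as in the paper.
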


$\ $

The paper is organized as follows. In section \ref{sec-2}, we present basic properties of hyperbolic spaces, including the isoperimetric inequality and the first non-zero Steklov eigenvalue of geodesic balls. We also state the well-known theorems for center of mass and mass transplantation. In section \ref{sec-3}, we discuss the detailed properties of the volume-area ratio function $g$. In section \ref{sec-4}, we introduce the key auxiliary function $h$, get the estimate using inverse mean curvature flow, and prove Theorem \ref{main thm} and Corollary \ref{main cor}.

\begin{ack}
	The work was supported by NSFC Grant No. 12471047. We would like to thank Yong Wei for his valuable comments and suggestions.
\end{ack}

\section{Preliminaries}\label{sec-2}
We use the warped product to illustrate the hyperbolic space $(\mathbb{H}^n,g_{\mathbb{H}^n})=[0,\infty)\times_{\lambda(r)} \mathbb{S}^{n-1}$, where $\lambda(r)=\sinh r$. That is, after choosing an origin $O$ in $\mathbb{H}^n$, the Riemannian metric $g_{\mathbb{H}^n}$ on $\mathbb{H}^n$ is defined by
\[
g_{\mathbb{H}^n}=dr^2+\lambda^2(r)g_{\mathbb{S}^{n-1}},
\]
where $r$ denotes the geodesic distance from $O$, and $g_{\mathbb{S}^{n-1}}$ is the canonical metric on the unit sphere $\mathbb{S}^{n-1}$. We will also use the notation $\la \cdot,\cdot\ra$ to denote $g_{\mathbb{H}^n}$.

\subsection{The first non-zero Steklov eigenvalue of geodesic balls}$\ $

For $r> 0$, a geodesic ball $B(r)\subset \mathbb{H}^n$ centered at $O$ with radius $r$ is given by $\{(s,\theta)\in \mathbb{H}^n:\ 0\leq s\leq r\}\subset \mathbb{H}^n$, and its boundary is the geodesic sphere $S(r)=\partial B(r)=\{(s,\theta)\in \mathbb{H}^n:\ s=r\}$. We can directly compute the volume and area of $B(r)$ as
\begin{align}\label{eq-B}
|B(r)|=\int_0^r\int_{\mathbb{S}^{n-1}}\lambda^{n-1}(t)dtd\mu_{\mathbb{S}^{n-1}}=\omega_{n-1}\int_0^r\lambda^{n-1}(t)dt
\end{align}
and
\begin{align}\label{eq-S}
|S(r)|=\int_{\mathbb{S}^{n-1}}\lambda^{n-1}(r)d\mu_{\mathbb{S}^{n-1}}=\omega_{n-1}\lambda^{n-1}(r),
\end{align}
where $\omega_{n-1}$ denotes the area of $\mathbb{S}^{n-1}$. 

It is remarkable that the geodesic ball minimizes the area among bounded domains with the same volume,  which is known as the isoperimetric inequality in hyperbolic space.
\begin{thm}[\cite{Sch43}]\label{thm-isoperimetric inequality}
Let $\Omega\subset \mathbb{H}^n$ be a bounded domain and $B(R)$ be a geodesic ball with the same volume as $\Omega$. Then the area of $\Omega$ satisfies
\begin{align}\label{eq-isoperimetric}
|\partial\Omega|\geq |\partial B(R)|.
\end{align}
Furthermore, equality holds if and only if $\Omega$ is a geodesic ball.
\end{thm}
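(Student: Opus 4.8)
Since the final statement is the classical hyperbolic isoperimetric inequality of E.~Schmidt, the plan is to establish it by symmetrization, a route that also delivers the rigidity clause. First I would reduce to a \emph{connected} domain: writing $\Omega=\bigsqcup_i\Omega_i$ one has $|\partial\Omega|=\sum_i|\partial\Omega_i|\ge\sum_i|\partial B(R_i)|$ with $|B(R_i)|=|\Omega_i|$, and since the hyperbolic isoperimetric profile $V\mapsto I(V):=|\partial B|$ (with $|B|=V$) satisfies $\frac{dI}{dV}=(n-1)\coth r$, which is strictly decreasing in $r$, $I$ is strictly concave with $I(0)=0$, hence strictly subadditive; therefore $\sum_i|\partial B(R_i)|\ge|\partial B(R)|$, with equality only if there is a single component. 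After this reduction I would further approximate $\Omega$ in $L^1$ by smooth domains of uniformly bounded perimeter, recovering the general statement at the end via lower semicontinuity of the perimeter.

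The core step is \emph{polarization} (two-point symmetrization). Fix a totally geodesic hyperplane $\Pi\subset\mathbb{H}^n$ bounding an open half-space $H$ with $O\in H$, and let $\sigma$ denote the isometric reflection across $\Pi$. The polarized set $\Omega^{\Pi}$ — obtained by moving, within each reflected pair $\{x,\sigma x\}$, as much of $\Omega$ as possible onto the $H$-side — satisfies $|\Omega^{\Pi}|=|\Omega|$, and the submodularity of perimeter, $P(A\cap B)+P(A\cup B)\le P(A)+P(B)$, together with the isometry invariance $P(\sigma A)=P(A)$, gives $|\partial\Omega^{\Pi}|\le|\partial\Omega|$. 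By the Brock--Solynin approximation theorem, which holds verbatim in space forms, one can choose hyperplanes $\Pi_1,\Pi_2,\dots$, each bounding a half-space containing $O$, so that $\Omega^{\Pi_1\cdots\Pi_k}\to\Omega^{\ast}$ in $L^1$, where $\Omega^{\ast}=B(R)$ is the geodesic ball centered at $O$ with $|B(R)|=|\Omega|$. Lower semicontinuity of the perimeter then yields $|\partial\Omega^{\ast}|\le\liminf_k|\partial\Omega^{\Pi_1\cdots\Pi_k}|\le|\partial\Omega|$, which is exactly \eqref{eq-isoperimetric}. (An alternative I would keep in reserve is the optimal-transport / needle-decomposition proof, using $\Ric_{\mathbb{H}^n}=-(n-1)g_{\mathbb{H}^n}$ to reduce to a one-dimensional comparison whose model is a geodesic ray in $\mathbb{H}^n$.)

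For the equality case, suppose $|\partial\Omega|=|\partial\Omega^{\ast}|$. Then along the polarization sequence the perimeters are non-increasing and converge to $|\partial\Omega^{\ast}|=|\partial\Omega|$, so every polarization preserves perimeter; invoking the equality case of the polarization inequality along a sufficiently rich family of hyperplanes forces $\Omega$, up to a null set, to coincide with a geodesic ball centered at $O$. A cleaner finish that I would actually carry out is to observe that an extremal $\Omega$ is an isoperimetric region, hence — by the regularity theory for volume-constrained perimeter minimizers — $\partial\Omega$ is, away from a closed singular set of Hausdorff dimension at most $n-8$, a smooth embedded hypersurface of constant mean curvature, whence Alexandrov's soap-bubble theorem in $\mathbb{H}^n$ (equivalently, the equality case of the Heintze--Karcher inequality) identifies $\Omega$ as a geodesic ball.

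The main obstacle is not a computation but the two soft inputs: (i) the Brock--Solynin-type assertion that iterated polarizations about half-spaces through $O$ converge in measure to the centered symmetric rearrangement in $\mathbb{H}^n$, which requires a careful, quantitative choice of the hyperplanes $\Pi_k$; and (ii) the rigidity analysis, where one must either track the equality case of polarization uniformly in the hyperplane or import the full regularity theory of isoperimetric minimizers together with Alexandrov's theorem in hyperbolic space. The remaining ingredients — submodularity and isometry invariance of perimeter, lower semicontinuity under $L^1$ convergence, and concavity of the isoperimetric profile of $\mathbb{H}^n$ — are standard.
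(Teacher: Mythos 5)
First, note that the paper itself offers no proof of this statement: it is quoted as a classical result of Schmidt \cite{Sch43}, so there is no internal argument to compare yours with. Taken on its own, your outline of the inequality \eqref{eq-isoperimetric} follows a known alternative route (polarization, a Brock--Solynin/Van Schaftingen-type approximation of the symmetrized set by iterated polarizations, which indeed extends to space forms, and lower semicontinuity of perimeter), and the reduction to connected domains via concavity of the profile is correct though unnecessary, since polarization handles arbitrary sets of finite perimeter. One step is smoother in the writing than in the mathematics: $P(\Omega^{\Pi})\le P(\Omega)$ does not follow from submodularity alone, because the polarized set agrees with $\Omega\cup\sigma\Omega$ on one side of $\Pi$ and with $\Omega\cap\sigma\Omega$ on the other, so one must also rule out a new interface along $\Pi$ itself; this is true and citable, but it needs a localized argument, not just $P(A\cup B)+P(A\cap B)\le P(A)+P(B)$.

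The genuine gap is the equality case. Your primary rigidity mechanism cannot work as stated: \emph{every} geodesic ball, wherever its center lies, is either fixed by a polarization about a half-space containing $O$ or is replaced by its reflected (congruent) image, so it satisfies equality in the polarization inequality for every such half-space. Hence ``all polarizations preserve perimeter'' does not force the extremal domain to be a ball centered at $O$ (that conclusion is in fact false --- off-center balls are also extremal), and, more seriously, there is no argument given that a set whose perimeter is preserved by all polarizations must be a ball at all; that is precisely the hard part of rigidity. Your fallback (an equality domain is an isoperimetric region, boundary regularity, then Alexandrov) is the standard modern route and is sound for $n\le 7$, but for larger $n$ the regularity theory only gives smoothness away from a singular set of codimension at least $8$, and the classical moving-plane Alexandrov theorem does not apply to such boundaries; one needs either a dimension restriction, an Alexandrov theorem for sets of finite perimeter with constant distributional mean curvature adapted to $\mathbb{H}^n$ (Delgadino--Maggi type), or Schmidt's original symmetrization rigidity. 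As written, the ``equality holds if and only if $\Omega$ is a geodesic ball'' clause is not established.
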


\begin{rmk}\label{rmk-2}
As noted in Remark \ref{rmk-1}, if we additionally denote $I_0(t)=|B(t)|$ for $t>0$, then \eqref{eq-isoperimetric} is equivalent to the following inequality
\begin{align}
\frac{|\Omega|}{I_0\circ I_1^{-1}(|\partial\Omega|)}\leq \left.\frac{|\Omega|}{I_0\circ I_1^{-1}(|\partial\Omega|)}\right|_{\text{$\Omega$ is a geodesic ball}}=1.
\end{align}
Combining this with (\ref{main-equ2}), we deduce
\begin{align}
    \frac{\sigma_1(\Omega)}{J\circ I_0^{-1}(|\Omega|)}\leq \left.\frac{\sigma_1(\Omega)}{J\circ I_0^{-1}(|\Omega|)}\right|_{\text{$\Omega$ is a geodesic ball}}=1.
\end{align}
\end{rmk}

An important function in hyperbolic space is the geodesic ball's volume-area ratio function $g$, defined as $g(r):=\frac{|B(r)|}{|S(r)|}$, i.e.
\begin{align}\label{eq-def-g}
g(r)=\frac{1}{\lambda^{n-1}(r)}\int_0^r\lambda^{n-1}(t)dt.
\end{align}
Using the standard method of separation of variables, the first non-zero Steklov eigenvalue of a geodesic ball is computed as follows:
\begin{prop}[\cite{BS14}]
Let $B(r)\subset \mathbb{H}^n$ be a geodesic ball centered at $O$ with boundary $S(r)$. Then the first non-zero eigenvalue $\sigma_1(B(r))$ of the Steklov eigenvalue problem \eqref{def-steklov} on $B(r)$ is given by
\begin{align}\label{eq-sigmabr}
\sigma_1(B(r))=\frac{\int_{B(r)}((g')^2+(n-1)\frac{g^2}{\lambda^2}) dv}{g(r)^2|S(r)|}.
\end{align}
\end{prop}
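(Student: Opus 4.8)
The plan is to use separation of variables adapted to the warped-product structure $\mathbb{H}^n=[0,\infty)\times_{\lambda}\mathbb{S}^{n-1}$, to recognize the radial part of the first Steklov eigenfunction as the ratio function $g$, and then to read off the stated formula from a Rayleigh quotient.

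\textbf{Step 1: separation of variables.} Let $\{Y_k\}$ be an $L^2(\mathbb{S}^{n-1})$-orthonormal basis of spherical harmonics, $-\Delta_{\mathbb{S}^{n-1}}Y_k=\mu_k Y_k$ with $\mu_k=k(k+n-2)$, $k\ge 0$. Since $\Delta=\lambda^{1-n}\partial_s(\lambda^{n-1}\partial_s u)+\lambda^{-2}\Delta_{\mathbb{S}^{n-1}}u$, a separated function $u=\phi(s)Y_k(\theta)$ is harmonic on $B(r)$ exactly when $(\lambda^{n-1}\phi')'=\mu_k\lambda^{n-3}\phi$. Choosing the solution $\phi_k$ that is regular at $s=0$ (there $\phi_k(s)\sim s^{k}$), the Steklov condition $\partial_s u=\sigma u$ on $S(r)$ forces $\sigma=\sigma_k:=\phi_k'(r)/\phi_k(r)$; conversely each $\sigma_k$ is attained, with multiplicity equal to the dimension of the degree-$k$ harmonics. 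Thus the Steklov spectrum of $B(r)$ equals $\{\sigma_k\}_{k\ge0}$, and $\sigma_0=0$ (constants).

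\textbf{Step 2: the first nonzero eigenvalue lies in the degree-one sector.} For $\mu_j<\mu_k$, multiply the ODE for $\phi_j$ by $\phi_k$, subtract the identity obtained by interchanging $j$ and $k$, and integrate over $[0,r]$. The $\lambda^{n-1}\phi_j'\phi_k'$ terms cancel, and the boundary contribution at $s=0$ vanishes because $\lambda^{n-1}(s)\to0$ while $\phi_j,\phi_k,\phi_j',\phi_k'$ stay bounded there, leaving
\[
\lambda^{n-1}(r)\bigl(\phi_j'(r)\phi_k(r)-\phi_k'(r)\phi_j(r)\bigr)=(\mu_j-\mu_k)\int_0^r\lambda^{n-3}\phi_j\phi_k\,ds.
\]
For $k\ge1$, the relation $(\lambda^{n-1}\phi_k')'=\mu_k\lambda^{n-3}\phi_k>0$ together with $\phi_k>0$, $\phi_k'>0$ near $0$ yields $\phi_k>0$ and $\phi_k'>0$ on all of $(0,r]$; hence the right-hand side above is negative and $\sigma_j<\sigma_k$. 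Therefore $\sigma_1(B(r))=\sigma_1$, realized with multiplicity $n$ by eigenfunctions with radial profile $\phi_1$ solving $(\lambda^{n-1}\phi_1')'=(n-1)\lambda^{n-3}\phi_1$.

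\textbf{Step 3: identifying $\phi_1=g$ and computing the Rayleigh quotient.} A direct computation using $\lambda^{n-1}g=\int_0^s\lambda^{n-1}$, $\lambda''=\lambda$ and $(\lambda')^2=1+\lambda^2$ shows $(\lambda^{n-1}g')'=(n-1)\lambda^{n-3}g$, while $g(s)\sim s/n$ near $0$; hence $g$ is a constant multiple of $\phi_1$. Now take the eigenfunction $u=g(s)Y_1(\theta)$ for any nonzero degree-one harmonic $Y_1$. Using $|\nabla u|^2=(g')^2Y_1^2+\lambda^{-2}g^2|\nabla_{\mathbb{S}^{n-1}}Y_1|^2$, $\int_{\mathbb{S}^{n-1}}|\nabla_{\mathbb{S}^{n-1}}Y_1|^2=(n-1)\int_{\mathbb{S}^{n-1}}Y_1^2$, $dv=\lambda^{n-1}\,ds\,d\mu_{\mathbb{S}^{n-1}}$ and $d\mu|_{S(r)}=\lambda^{n-1}(r)\,d\mu_{\mathbb{S}^{n-1}}$, the common constants $\int_{\mathbb{S}^{n-1}}Y_1^2$ and $\omega_{n-1}$ cancel and one obtains
\[
\frac{\int_{B(r)}|\nabla u|^2\,dv}{\int_{S(r)}u^2\,d\mu}=\frac{\int_{B(r)}\bigl((g')^2+(n-1)\tfrac{g^2}{\lambda^2}\bigr)\,dv}{g(r)^2|S(r)|}.
\]
Since $u$ is a genuine Steklov eigenfunction for $\sigma_1(B(r))$, the left-hand side equals $\sigma_1(B(r))$, giving \eqref{eq-sigmabr}. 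The step I expect to require the most care is Step 2 — in particular controlling the behavior of $\phi_k$ at the origin so that the boundary term genuinely drops out; the remaining manipulations are routine warped-product bookkeeping.
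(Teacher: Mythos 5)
Your proof is correct and follows the same separation-of-variables route that the paper invokes by citing \cite{BS14} without reproducing an argument: the regular radial profile in the degree-one sector is (a multiple of) $g$, since $(\lambda^{n-1}g')'=(n-1)\lambda^{n-3}g$, and the Rayleigh quotient of the genuine eigenfunction $g(s)Y_1(\theta)$ yields exactly \eqref{eq-sigmabr}, consistent with $\sigma_1(B(r))=g'(r)/g(r)$ used later in \eqref{eq-sigmabr2}. The only small point to tighten in Step 2 is that the sign conclusion also needs $\phi_j>0$ on $(0,r]$ (immediate for $j=0$ and by the same bootstrap for $j\ge1$), so that dividing by $\phi_j(r)\phi_k(r)$ to get $\sigma_j<\sigma_k$ is justified.
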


\subsection{Center of mass}$\ $

In order to study the upper bound of the first non-zero Steklov eigenvalue not only for geodesic balls but also for general domains, we need the following center of mass theorem. 

In \cite{AS96}, Aithal and Santhanam give a theorem for complete Riemannian manifold $M$, which also applies to measurable subset $A$ in hyperbolic space. Let $CA$ denote the convex hull of $A$. Let $\exp_q:T_qM\to M$ be the exponential map and let $X=(x_1,\ldots,x_n)$ be a system of normal coordinates centered at $q$. We identify $CA$ with $\exp_q^{-1}(CA)$ for each $q\in CA$, and denote $(g_{\mathbb{H}^n})_q(X,X)$ as $\|X\|_q^2$ for $X\in T_qM$. The theorem of center of mass with respect to the mass distribution function $G$ is stated as
\begin{thm}[\cite{BS14}]
Let $A\subset \mathbb{H}^n$ be a measurable subset and $G$ be a continuous function on $[0,\infty)$ that is positive on $(0,\infty)$. Then there exists a point $p\in CA$ such that
\[
\int_{A}G(\|X\|_p)X dv=0,
\]
where $X=(x_1,\ldots,x_n)$ is a normal coordinate system centered at $p$.
\end{thm}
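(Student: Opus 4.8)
The plan is to produce the point $p$ as a minimizer of an auxiliary potential on the convex hull $CA$, in the spirit of the classical Cartan--Karcher center-of-mass construction in Hadamard manifolds, but with the weight $G$ built in. First I would set $\psi(t)=\int_0^t sG(s)\,ds$ for $t\ge 0$, so that $\psi\in C^1([0,\infty))$ with $\psi(0)=\psi'(0)=0$ and $\psi'(t)=tG(t)\ge 0$, and then define
\[
\Phi(q)=\int_A\psi\big(\|X\|_q\big)\,dv,\qquad q\in CA,
\]
where, as in the statement, $X=\exp_q^{-1}(x)$ for $x\in A$. The key identity is that, since $\nabla_q\,d(q,x)=-\exp_q^{-1}(x)/d(q,x)$ for fixed $x$, one has $\nabla_q\,\psi(d(q,x))=-G(d(q,x))\exp_q^{-1}(x)$, and hence
\[
\nabla_q\Phi=-\int_A G\big(\|X\|_q\big)\,X\,dv=:-F(q).
\]
So it is enough to find $p\in CA$ with $F(p)=0$, i.e.\ a critical point of $\Phi$.

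Next I would minimize $\Phi$ over $CA$. In all our applications $A$ is bounded, so $CA\subset\bH^n$ is compact and geodesically convex, and $\Phi$ is continuous; hence a minimizer $p\in CA$ exists. For any $x\in CA$, geodesic convexity guarantees that the geodesic $\gamma(t)=\exp_p\big(t\,\exp_p^{-1}(x)\big)$, $t\in[0,1]$, stays in $CA$, so $t\mapsto\Phi(\gamma(t))$ is minimized at $t=0$ and its right derivative there is $\ge 0$; unwinding, this says $\la F(p),\exp_p^{-1}(x)\ra\le 0$. Since $A\subseteq CA$, this holds for a.e.\ $x\in A$, whence
\[
\la F(p),F(p)\ra=\int_A G\big(\|X\|_p\big)\,\la F(p),X\ra\,dv\le 0
\]
(using $G\ge 0$ and that $\{x=p\}$ is null), forcing $F(p)=0$.

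The argument is a soft variational/compactness one, and the only step that really requires care is the $C^1$-regularity of $\Phi$ near the diagonal $q=x$, where $d(\cdot,x)$ is not smooth; this is rescued by $\psi'(0)=0$, which makes $\psi\circ d(\cdot,x)=O\!\big(d(\cdot,x)^2\big)$ there, and in any case one can avoid differentiating by using one-sided directional derivatives of $\Phi$ along geodesics. The remaining ingredients — compactness of $CA$, finiteness of $\Phi$, geodesic convexity of convex hulls in a Hadamard manifold — are standard once $A$ is bounded. I note that no interior-versus-boundary case split is needed: the first-order inequality $\la F(p),\exp_p^{-1}(x)\ra\le 0$ for all $x\in CA$ holds at any minimizer and already forces $\la F(p),F(p)\ra\le 0$, so it is indifferent to whether $CA$ is full-dimensional. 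Should one prefer a purely topological proof, one can instead check that $F$ points weakly inward along $\partial CA$ (via supporting totally geodesic hypersurfaces, using $A\subseteq CA$) and apply Brouwer's theorem to the self-maps $q\mapsto\exp_q\big(\varepsilon F(q)/(1+\|F(q)\|_q)\big)$ of $CA$; but the variational route above is shorter and avoids trivializing $TM|_{CA}$.
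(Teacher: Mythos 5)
Your argument is correct as a proof of the center-of-mass lemma, but note that the paper itself contains no proof of this statement: it is imported with a citation from Aithal--Santhanam \cite{AS96} and Binoy--Santhanam \cite{BS14}, so the relevant comparison is with the argument in those sources, which is the topological (Brouwer-type) one you only sketch in your final sentence: if $F(q)=\int_A G(\|X\|_q)X\,dv$ never vanished on the compact convex hull, then, since $A\subseteq CA$ and $G>0$, $F$ points weakly inward along supporting totally geodesic hypersurfaces of $\partial (CA)$, and a fixed-point/degree argument yields a contradiction. Your primary route is genuinely different: you realize $F$ as $-\nabla\Phi$ for the potential $\Phi(q)=\int_A\psi(\|X\|_q)\,dv$ with $\psi(t)=\int_0^t sG(s)\,ds$, take a minimizer $p$ of $\Phi$ on the convex hull, and convert the first-order condition $\langle F(p),\exp_p^{-1}(x)\rangle\le 0$ for $x\in CA$ into $\|F(p)\|^2\le 0$ by testing against $A\subseteq CA$; this is shorter and softer than the cited proof (no degree theory, no trivialization of $T\mathbb{H}^n$, no interior-versus-boundary case split), at the price of the $C^1$/differentiation-under-the-integral issue at $q=x$, which you correctly defuse via $\psi(t)=O(t^2)$ or one-sided derivatives along geodesics, whereas the Brouwer route needs no differentiability but does need the inward-pointing estimate on $\partial(CA)$. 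Two points you should make explicit to match the (loosely stated) hypotheses: (i) as quoted, $A$ is an arbitrary measurable set, so some finiteness assumption (boundedness of $A$, as in the application $A=\partial\Omega$, or integrability of $G(\|X\|_q)\|X\|_q$ over $A$) is needed for $F$ and $\Phi$ to be defined --- this is implicit in the sources too; and (ii) minimize over the \emph{closed} convex hull, which is convex and, for bounded $A\subset\mathbb{H}^n$, compact (in the Klein model geodesic convexity is Euclidean convexity, so this is immediate), so that the minimizer exists and your first-order inequality applies even when $p$ lies on the boundary.
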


By applying the center of mass theorem for $A=\partial\Omega$ with respect to $G=g$ and choosing test functions  $g\frac{x_i}{r}$ in the Rayleigh quotient \eqref{def-steklov}, we obtain
\begin{align*}
    \sigma_1(\Omega)\int_{\partial\Omega}g^2\frac{x_i^2}{r^2} d\mu\leq \int_{\Omega}|\nabla (g\frac{x_i}{r})|^2 dv,\qquad i=1,\ldots,n.
\end{align*}
Summing over $i$, we derive the upper bound of the first non-zero Steklov eigenvalue via $g$. For further details, see e.g. \cites{AV22, BS14, Ver21}.
\begin{prop}
Let $\Omega$ be a bounded domain in $\mathbb{H}^n$ with smooth boundary $\Sigma=\partial \Omega$. Then
\begin{align}\label{eq-upperbound-steklov}
\sigma_1(\Omega)\int_{\Sigma}g^2 d\mu\leq \int_{\Omega}((g')^2+(n-1)\frac{g^2}{\lambda^2}) dv.
\end{align}
\end{prop}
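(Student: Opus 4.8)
The plan is to test the Rayleigh quotient in \eqref{def-steklov} against $n$ suitably centered coordinate functions. Fix a point $p\in\mathbb{H}^n$, let $(x_1,\dots,x_n)$ be normal coordinates centered at $p$, and let $r=\|X\|_p$ be the geodesic distance to $p$; since $\mathbb{H}^n$ is Cartan--Hadamard these coordinates are defined globally and $r$ is smooth on $\mathbb{H}^n\setminus\{p\}$. Set $\phi_i:=g(r)\,x_i/r$ for $i=1,\dots,n$ and write $\theta_i:=x_i/r$ for the components of the radial direction, so that $\phi_i=g(r)\theta_i$ and $\sum_i\theta_i^2\equiv 1$.

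First I would choose the base point $p$ so that each $\phi_i$ is admissible in \eqref{def-steklov}, i.e. has vanishing boundary average. Applying the center of mass theorem to $A=\partial\Omega$ with the weight $t\mapsto g(t)/t$ --- which extends to a continuous function on $[0,\infty)$ with value $1/n$ at $0$ (because $\lambda(t)=\sinh t\sim t$ gives $g(t)\sim t/n$ near $0$) and is positive on $(0,\infty)$ --- produces a point $p\in C(\partial\Omega)$ and normal coordinates centered at $p$ with $\int_{\partial\Omega}\frac{g(r)}{r}x_i\,d\mu=0$, that is $\int_{\partial\Omega}\phi_i\,d\mu=0$ for all $i$. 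Since $\phi_i=(g(r)/r)x_i$ with $g(r)/r$ bounded near $p$, each $\phi_i$ is Lipschitz on $\overline{\Omega}$ and hence lies in $H^1(\Omega)$. Using $\phi_i$ as a test function in \eqref{def-steklov} therefore gives
\[
\sigma_1(\Omega)\int_{\partial\Omega}\phi_i^2\,d\mu\ \le\ \int_{\Omega}|\nabla\phi_i|^2\,dv,\qquad i=1,\dots,n.
\]

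Next I would sum these $n$ inequalities. On $\Sigma=\partial\Omega$ one has $\sum_i\phi_i^2=g(r)^2\sum_i\theta_i^2=g^2$, so the left-hand sides add up to $\sigma_1(\Omega)\int_{\Sigma}g^2\,d\mu$. For the right-hand sides I would use the warped product structure $g_{\mathbb{H}^n}=dr^2+\lambda(r)^2 g_{\mathbb{S}^{n-1}}$: the gradient of $\phi_i=g(r)\theta_i$ decomposes into the radial part $g'(r)\theta_i\,\partial_r$ and the part tangent to the geodesic sphere $\{r=\text{const}\}$ centered at $p$, which equals $g(r)$ times the gradient of $\theta_i$ along that sphere. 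Since the sphere of radius $r$ carries the induced metric $\lambda(r)^2 g_{\mathbb{S}^{n-1}}$, this yields $|\nabla\phi_i|^2=(g')^2\theta_i^2+\frac{g^2}{\lambda^2}|\nabla_{\mathbb{S}^{n-1}}\theta_i|^2$. Summing over $i$ and invoking the elementary identities $\sum_i\theta_i^2=1$ and $\sum_i|\nabla_{\mathbb{S}^{n-1}}\theta_i|^2=\sum_i(1-\theta_i^2)=n-1$ on $\mathbb{S}^{n-1}$ gives $\sum_i|\nabla\phi_i|^2=(g')^2+(n-1)\frac{g^2}{\lambda^2}$, and substituting the two sums into the inequality above is exactly \eqref{eq-upperbound-steklov}.

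I do not expect a real obstacle: this is a weighted center of mass argument combined with a short test function computation. The only points requiring attention are checking that the weight $t\mapsto g(t)/t$ satisfies the hypotheses of the center of mass theorem, so that the $\phi_i$ have zero boundary average, and the $H^1$-membership of $\phi_i$ near the coordinate origin $p$ (which follows from the Lipschitz bound there). The single computational step is the identity $\sum_i|\nabla\phi_i|^2=(g')^2+(n-1)g^2/\lambda^2$, which drops out of the warped product splitting of the gradient together with $\sum_i\theta_i^2=1$ and $\sum_i|\nabla_{\mathbb{S}^{n-1}}\theta_i|^2=n-1$.
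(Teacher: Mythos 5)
Your proposal is correct and follows essentially the same route as the paper: apply the center of mass theorem to $\partial\Omega$ so that the test functions $g(r)x_i/r$ have zero boundary average, plug them into the Rayleigh quotient, and sum over $i$ using $\sum_i\theta_i^2=1$ and $\sum_i|\nabla_{\mathbb{S}^{n-1}}\theta_i|^2=n-1$. Your choice of weight $G(t)=g(t)/t$ (rather than the paper's shorthand ``$G=g$'') is indeed the correct one for making these particular test functions admissible, and your warped-product gradient computation supplies the details the paper delegates to the cited references.
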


\subsection{Mass transplantation}$\ $

Finally, we state the mass transplantation argument due to Weinberger \cite{Wei56}, which is a powerful tool for estimates involving integrals. A clear proof can be seen in \cite{FL21}.
\begin{thm}[Mass transplantation]\label{masstransplantation}
Let $\Omega$ be a bounded Lipschitz domain in $\mathbb{H}^n$ with the same volume as $B(R)$. If $f(r)$ is decreasing and integrable on $[0,+\infty)$, then
\begin{equation}\label{mass-ineq}
    \int_{\Omega}f(r) dv\leq\int_{B(R)}f(r) dv.
\end{equation}
If in addition $f(r)$ is strictly decreasing, then equality holds if and only if $\Omega=B(R)$. If $f(r)$ is increasing and integrable on $[0,+\infty)$, then the inequality reverses direction.
\begin{proof}
Since $f$ is decreasing and $|\Omega|=|B(R)|$, we have
\begin{equation}\label{mass-ineq2}
\begin{split}
     \int_{\Omega}f(r) dv
    =&\int_{\Omega\cap B(R)}f(r) dv+\int_{\Omega\backslash B(R)}f(r) dv\\
    \leq& \int_{\Omega\cap B(R)}f(r) dv+|\Omega\backslash B(R)|f(R)\\
    =&\int_{\Omega\cap B(R)}f(r) dv+|B(R)\backslash \Omega|f(R)\\
    \leq& \int_{\Omega\cap B(R)}f(r) dv+\int_{B(R)\backslash \Omega}f(r) dv=\int_{B(R)}f(r) dv.
\end{split}
\end{equation}
It suffices to show that the inequality (\ref{mass-ineq}) is strict when $f(r)$ is strictly decreasing. If $\Omega\nsubseteq B(R)$, then $\Omega$ contains a point at radius $r>R$ and thus contains a neighborhood outside $B(R)$. Consequently, $|\Omega\backslash B(R)|>0$, and the first inequality in (\ref{mass-ineq2}) is strict because $f(r)$ is strictly decreasing. Similarly, if $B(R)\nsubseteq \Omega$, the second inequality in (\ref{mass-ineq2}) is strict. This completes the proof of Theorem \ref{masstransplantation}.
\end{proof}
\end{thm}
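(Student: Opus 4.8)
The plan is to compare $\Omega$ against $B(R)$ directly by isolating the regions where they disagree and exploiting the equal-volume hypothesis to cancel the common overlap. The guiding intuition is that transferring any unit of volume from inside $B(R)$ (where $r<R$, hence $f$ is large) to outside $B(R)$ (where $r>R$, hence $f$ is small) can only decrease $\int f\,dv$; since $\Omega$ is obtained from $B(R)$ by exactly such a volume-preserving redistribution, its integral cannot exceed that of $B(R)$.

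Concretely, I would first decompose both integrals relative to the ball,
\begin{align*}
\int_\Omega f(r)\,dv &= \int_{\Omega \cap B(R)} f(r)\,dv + \int_{\Omega \setminus B(R)} f(r)\,dv,\\
\int_{B(R)} f(r)\,dv &= \int_{\Omega \cap B(R)} f(r)\,dv + \int_{B(R) \setminus \Omega} f(r)\,dv,
\end{align*}
so that the common term $\int_{\Omega \cap B(R)} f\,dv$ cancels upon subtraction and the claim reduces to $\int_{\Omega\setminus B(R)} f\,dv \le \int_{B(R)\setminus\Omega} f\,dv$. Next I would test monotonicity against the threshold value $f(R)$: on $\Omega\setminus B(R)$ one has $r \ge R$, so $f(r)\le f(R)$ and $\int_{\Omega\setminus B(R)} f\,dv \le f(R)\,|\Omega\setminus B(R)|$; on $B(R)\setminus\Omega$ one has $r\le R$, so $f(r)\ge f(R)$ and $\int_{B(R)\setminus\Omega} f\,dv \ge f(R)\,|B(R)\setminus\Omega|$. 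Finally, subtracting $|\Omega\cap B(R)|$ from the equal volumes $|\Omega|=|B(R)|$ yields $|\Omega\setminus B(R)|=|B(R)\setminus\Omega|$, which makes the two $f(R)$-bounds match and closes the inequality. Boundedness of $\Omega$ confines $r$ to a bounded range there, and the integrability hypothesis on $f$ guarantees all integrals are finite.

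For the equality statement, assume $f$ is strictly decreasing. If $\Omega\neq B(R)$ up to a null set, then, as both have the same volume, we cannot have $\Omega\subseteq B(R)$ almost everywhere, so $|\Omega\setminus B(R)|>0$; on this positive-measure set $r>R$ gives $f(r)<f(R)$ strictly, making the first monotonicity bound strict and hence the entire chain strict. The increasing case follows immediately by applying the decreasing result to $-f$. The main obstacle, and the only place requiring genuine care, is this equality characterization: one must argue measure-theoretically that a nontrivial discrepancy between $\Omega$ and $B(R)$ forces the symmetric difference to have positive measure and that strict monotonicity of $f$ then propagates to a strict inequality. A conceptually cleaner route to the same conclusions, which I would keep in reserve, is the layer-cake identity: after adding a constant to make $f\ge 0$ on the bounded range of $r$ (harmless, since it shifts both sides by the equal quantity $c|\Omega|=c|B(R)|$), the super-level sets $\{f>t\}$ are balls $B(\rho_t)$ centered at $O$, so
\[
\int_\Omega f\,dv = \int_0^\infty |\Omega \cap B(\rho_t)|\,dt \le \int_0^\infty \min\{|\Omega|,|B(\rho_t)|\}\,dt = \int_0^\infty |B(R)\cap B(\rho_t)|\,dt = \int_{B(R)} f\,dv,
\]
with equality for strictly decreasing $f$ precisely when $|\Omega\cap B(\rho_t)|=\min\{|B(R)|,|B(\rho_t)|\}$ for almost every $t$, that is, when $\Omega=B(R)$ up to measure zero.
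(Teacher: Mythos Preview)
Your proposal is correct and follows essentially the same approach as the paper: decompose $\Omega$ and $B(R)$ over their intersection, bound the off-diagonal pieces against $f(R)$ via monotonicity, and match them using $|\Omega\setminus B(R)|=|B(R)\setminus\Omega|$ from the equal-volume hypothesis. Your equality discussion and the layer-cake alternative are slightly more measure-theoretic than the paper's (which uses that a Lipschitz domain is open to produce a neighborhood outside $B(R)$), but the substance is the same.
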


\section{The volume-area ratio function $g(r)$}\label{sec-3}
\subsection{The geometric properties of $g(r)$}$\ $

By using the expression for $g(r)$ given in \eqref{eq-def-g}, we have
\begin{align}\label{eq-deri-ln-1g}
(\lambda^{n-1}(r)g(r))'=(\int_0^r\lambda^{n-1}(t)dt)'=\lambda^{n-1}(r),
\end{align}
and then
\begin{align}\label{eq-deri-g}
g'=1-(n-1)\frac{\lambda' g}{\lambda}.
\end{align}

\begin{prop}\label{prop-divgY}
Let $Y:= g(r) \partial_r$ be a vector field. Then
\begin{equation}\label{div-Y}
    \divg(Y)=1.
\end{equation}

\begin{proof}
For each $p\in \mathbb{H}^n$, choose an orthonormal frame $\{e_1,\ldots,e_n\}$ around $p$ such that $e_n=\partial_r$. Recall the vector field $V:=\lambda(r)\partial_r$ is a conformal Killing field on $\mathbb{H}^n$, which satisfies
\begin{align}\label{eq-killing}
\la \nabla_{e_i}V,e_j\ra=\lambda'(r)\la e_i,e_j\ra,
\end{align}
see e.g. \cite[Lemma 2.2]{Bre13}. Then it follows from \eqref{eq-deri-g} that
\begin{align*}
    \divg(Y)=&\divg(\frac{g}{\lambda}V)
    =\sum_{i=1}^n\la \nabla_{e_i}(\frac{g}{\lambda}V),e_i\ra\\
    =&(\frac{g}{\lambda})'\lambda+n(\frac{g}{\lambda})\lambda'
    =g'+(n-1)\frac{\lambda' g}{\lambda}=1.
\end{align*}

\end{proof}
\end{prop}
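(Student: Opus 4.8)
The plan is to reduce everything to the first-order identities for $g$ that are already recorded, namely $(\lambda^{n-1}(r)g(r))' = \lambda^{n-1}(r)$ from \eqref{eq-deri-ln-1g} and equivalently $g' = 1 - (n-1)\lambda' g/\lambda$ from \eqref{eq-deri-g}. The key observation is simply that $Y = g(r)\partial_r$ is a radial vector field on the warped product $[0,\infty)\times_{\lambda}\mathbb{S}^{n-1}$, so its divergence splits into the derivative of the coefficient $g$ along $\partial_r$ plus $g$ times the divergence of the unit radial field $\partial_r$ (which is the mean curvature $(n-1)\lambda'/\lambda$ of the geodesic spheres).

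Concretely, I would fix $p\in\mathbb{H}^n$ and an orthonormal frame $\{e_1,\dots,e_n\}$ with $e_n=\partial_r$, and use that $V:=\lambda(r)\partial_r$ is a conformal Killing field satisfying $\la\nabla_{e_i}V,e_j\ra=\lambda'(r)\la e_i,e_j\ra$ (as in \cite[Lemma 2.2]{Bre13}), whence $\divg V=n\lambda'$. Writing $Y=(g/\lambda)V$ and applying the Leibniz rule for divergence together with $\nabla(g/\lambda)=(g/\lambda)'\partial_r$ yields
\[
\divg(Y)=(g/\lambda)'\lambda+n(g/\lambda)\lambda'=g'+(n-1)\frac{\lambda' g}{\lambda},
\]
which equals $1$ by \eqref{eq-deri-g}. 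Alternatively, one can bypass the conformal Killing field and just use that the Riemannian volume element is $\lambda^{n-1}(r)\,dr\,d\mu_{\mathbb{S}^{n-1}}$, so that $\divg(f\partial_r)=\lambda^{1-n}\partial_r(\lambda^{n-1}f)$ for any radial function $f$; taking $f=g$ and invoking \eqref{eq-deri-ln-1g} gives $\divg(Y)=\lambda^{1-n}\cdot\lambda^{n-1}=1$ at once.

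There is no genuine obstacle here: the whole content sits in the defining relation \eqref{eq-def-g} for $g$ (equivalently its derivative \eqref{eq-deri-g}), and the only point requiring care is to use the correct formula for the divergence of a radial field in the warped metric. For that, the frame computation with the conformal Killing field $V$ is the most economical bookkeeping, so that is the version I would present.
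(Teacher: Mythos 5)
Your proposal is correct and follows essentially the same route as the paper: writing $Y=(g/\lambda)V$ with the conformal Killing field $V=\lambda\partial_r$, computing the divergence in an orthonormal frame, and closing with \eqref{eq-deri-g}. The alternative you mention via $\divg(f\partial_r)=\lambda^{1-n}\partial_r(\lambda^{n-1}f)$ and \eqref{eq-deri-ln-1g} is a fine shortcut but adds nothing beyond the paper's argument.
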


A direct corollary follows from Proposition \ref{prop-divgY} and the divergence theorem.
\begin{cor}\label{cor-g}
We have the following inequality
\begin{align}\label{eq-g}
\int_{\Sigma}g d\mu\geq |\Omega|.
\end{align}
\begin{proof}
Since $\la \partial_r,\nu\ra\leq 1$ and $g\geq 0$, we have
\begin{align*}
    |\Omega|=\int_{\Omega}dv=\int_{\Omega}\divg(g(r)\partial_ r) dv=\int_{\Sigma}g\la \partial_r,\nu\ra d\mu\leq \int_{\Sigma}g d\mu.
\end{align*}
\end{proof}
\end{cor}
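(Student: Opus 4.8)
The plan is to read off \eqref{eq-g} directly from the divergence identity of Proposition \ref{prop-divgY} together with the divergence theorem. I would take the vector field $Y = g(r)\partial_r$; by Proposition \ref{prop-divgY} one has $\divg Y \equiv 1$ on $\mathbb{H}^n\setminus\{O\}$. Two elementary facts then close the argument: first, $g(r)\geq 0$ for every $r\geq 0$, which is immediate from the defining formula \eqref{eq-def-g} since $\lambda(t)=\sinh t>0$ on $(0,\infty)$; second, $\la\partial_r,\nu\ra\leq 1$ on $\Sigma$, because $\partial_r$ and the outward unit normal $\nu$ are both unit vectors for $g_{\mathbb{H}^n}$, so this is just the Cauchy--Schwarz inequality.

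The one point requiring a word of care is the origin, where $\partial_r$ is not defined. Since $g(r)=\lambda^{1-n}(r)\int_0^r\lambda^{n-1}(t)\,dt$ behaves like $r/n$ as $r\to 0^+$, the field $Y$ extends continuously across $O$ with $Y(O)=0$ and is bounded near $O$. If $O\notin\overline{\Omega}$ the divergence theorem applies to $Y$ on $\Omega$ directly; if $O\in\Omega$ (the relevant case for a star-shaped boundary), I would instead apply it on $\Omega\setminus B(\epsilon)$ and let $\epsilon\to 0$, noting that the spurious boundary contribution over $S(\epsilon)$ is bounded by $|S(\epsilon)|\sup_{S(\epsilon)}|Y|=O(\epsilon^{n-1}\cdot\epsilon)\to 0$. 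Either way one arrives at
\[
|\Omega|=\int_{\Omega}\divg Y\,dv=\int_{\Sigma}\la Y,\nu\ra\,d\mu=\int_{\Sigma}g\,\la\partial_r,\nu\ra\,d\mu,
\]
and combining with $g\geq 0$ and $\la\partial_r,\nu\ra\leq 1$ yields $\int_{\Sigma}g\,\la\partial_r,\nu\ra\,d\mu\leq\int_{\Sigma}g\,d\mu$, which is \eqref{eq-g}.

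There is no substantial obstacle here; the only subtlety, as noted, is the regularity of $Y$ at $O$, handled by the excision argument above. It is worth recording the equality case as well, since it will be used later in the paper: equality forces $\la\partial_r,\nu\ra=1$ $\mu$-a.e.\ on $\{g>0\}=\Sigma$, i.e.\ $\nu=\partial_r$ everywhere on $\Sigma$, which means $\Sigma$ is a geodesic sphere centered at $O$ and hence $\Omega$ is a geodesic ball.
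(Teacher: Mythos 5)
Your proof is correct and follows the same route as the paper: apply Proposition \ref{prop-divgY} and the divergence theorem to $Y=g(r)\partial_r$, then use $g\geq 0$ and $\la\partial_r,\nu\ra\leq 1$. The extra care at the origin and the equality-case remark are harmless additions, but the argument is essentially identical to the paper's.
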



Similarly, the following Proposition follows just from the divergence theorem and the mass transplantation, as shown in \cite[Lemma 3.3]{BS14}. For the sake of completeness, let us give a proof.
\begin{prop}\label{prop-g2-n=4}
Let $B(R)$ be a geodesic ball in $\mathbb{H}^n$ with the same volume as $\Omega$, then
\begin{align}\label{eq-intg2}
\int_{\Sigma}g^2 d\mu\geq g(R)|B(R)|.
\end{align}
\begin{proof}
First, it follows from \eqref{eq-deri-g} and \eqref{eq-killing} that
\begin{align*}
    \divg(g^2\partial_r)=\divg(\frac{g^2}{\lambda}V)=(\frac{g^2}{\lambda})'\lambda+n(\frac{g^2}{\lambda})\lambda'=g(1+g'),
\end{align*}
and then
\begin{align*}
    \int_{\Sigma}g^2 d\mu\geq \int_{\Sigma}g^2\la \partial_r,\nu\ra d\mu=\int_{\Omega}g(1+g') dv.
\end{align*}

Next, we demonstrate the monotonicity of $g(1+g')$ as follows:
\begin{align*}
    (g(1+g'))'=& g'+(g')^2+gg''\\
    =& g'+(g')^2+g((n-1)\frac{g}{\lambda^2}-(n-1)\frac{\lambda'g'}{\lambda})\\
    =& g'(1-(n-1)\frac{\lambda'g}{\lambda})+(g')^2+(n-1)\frac{g^2}{\lambda^2}\\
    =& 2(g')^2+(n-1)\frac{g^2}{\lambda^2}\geq 0.
\end{align*}
Using the mass transplantation argument, we conclude
\begin{align*}
    \int_{\Omega}g(1+g') dv & \geq \int_{B(R)}g(1+g') dv
    = \omega_{n-1}\int_0^R g(1+g')\lambda^{n-1}dt\\
    &= \omega_{n-1}\int_0^R (\lambda^{n-1}g^2)' dt
    = \omega_{n-1} \lambda^{n-1}(R)g^2(R)\\
    &= g(R) |B(R)|.
\end{align*}
This complete the proof of (\ref{eq-intg2}).
\end{proof}
\end{prop}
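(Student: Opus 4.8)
The plan is to combine the divergence theorem with the mass transplantation argument of Theorem~\ref{masstransplantation}, exactly in the spirit of Proposition~\ref{prop-divgY} and Corollary~\ref{cor-g}. The first step is to compute the divergence of the radial field $g^2\partial_r$. Writing $g^2\partial_r=\tfrac{g^2}{\lambda}V$ with $V=\lambda(r)\partial_r$ the conformal Killing field of \eqref{eq-killing}, and using $\divg V=n\lambda'$ together with \eqref{eq-deri-g} in the form $(n-1)\tfrac{\lambda'g}{\lambda}=1-g'$, one finds $\divg(g^2\partial_r)=g(1+g')$. Since $\langle\partial_r,\nu\rangle\le 1$ and $g\ge 0$, the divergence theorem gives
\[
\int_{\Sigma}g^2\,d\mu\ \ge\ \int_{\Sigma}g^2\langle\partial_r,\nu\rangle\,d\mu\ =\ \int_{\Omega}g(1+g')\,dv .
\]

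The second step is to control the last integral by its value on the model ball via mass transplantation, which requires $g(1+g')$ to be monotone in $r$. Differentiating \eqref{eq-deri-g} once more (using the hyperbolic identities $\lambda''=\lambda$ and $(\lambda')^2-\lambda^2=1$) gives $g''=(n-1)\tfrac{g}{\lambda^2}-(n-1)\tfrac{\lambda'g'}{\lambda}$, and feeding this into $(g(1+g'))'=g'+(g')^2+gg''$ makes the terms linear in $g'$ cancel, leaving the manifestly nonnegative expression
\[
(g(1+g'))'=2(g')^2+(n-1)\tfrac{g^2}{\lambda^2}\ \ge\ 0 .
\]
So $g(1+g')$ is non-decreasing, and since $|\Omega|=|B(R)|$, the increasing case of Theorem~\ref{masstransplantation} (in which the inequality reverses) yields $\int_{\Omega}g(1+g')\,dv\ge\int_{B(R)}g(1+g')\,dv$.

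Finally I would evaluate the model integral. From \eqref{eq-deri-g} one checks $(\lambda^{n-1}g^2)'=\lambda^{n-1}g(1+g')$, so by \eqref{eq-S}
\[
\int_{B(R)}g(1+g')\,dv=\omega_{n-1}\int_0^R(\lambda^{n-1}g^2)'\,dt=\omega_{n-1}\lambda^{n-1}(R)g(R)^2=g(R)\,|B(R)|,
\]
using $g(R)=|B(R)|/|S(R)|$ in the last step; chaining the three displays proves \eqref{eq-intg2}. The argument is essentially bookkeeping once the field $g^2\partial_r$ is chosen; the only substantive point is recognizing that its divergence $g(1+g')$ is simultaneously monotone in $r$ and integrates on the model ball to exactly $g(R)|B(R)|$, so that mass transplantation can be brought to bear. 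The one place that warrants care is the sign of $(g(1+g'))'$: the cancellation of the linear-in-$g'$ terms must actually happen, since a slip there would reverse the mass-transplantation step and break the proof.
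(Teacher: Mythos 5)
Your proposal is correct and follows essentially the same route as the paper: compute $\divg(g^2\partial_r)=g(1+g')$ via the conformal Killing field and \eqref{eq-deri-g}, apply the divergence theorem with $\langle\partial_r,\nu\rangle\le 1$, verify $(g(1+g'))'=2(g')^2+(n-1)\frac{g^2}{\lambda^2}\ge 0$, and finish by mass transplantation together with the identity $(\lambda^{n-1}g^2)'=\lambda^{n-1}g(1+g')$ on the model ball. All computations check out, so there is nothing to add.
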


\subsection{The analytic properties of $g(r)$}$\ $

The following properties are essential in our proof.

\begin{prop}\label{prop-g}
The function $g$ is an increasing concave function, with the following limits:
\begin{align}\label{eq-g-lim}
\lim_{r\to 0}\frac{g}{\lambda}=\frac{1}{n},\qquad \lim_{r\to \infty}g=\frac{1}{n-1},\qquad \lim_{r\to 0}g'=\frac{1}{n},\qquad \lim_{r\to \infty}g'=0.
\end{align}
Furthermore, if $n\geq 4$, we have
\begin{align}\label{eq-g-lim2}
\lim_{r\to\infty}\lambda^2g'=\frac{1}{n-3}.
\end{align}
\begin{proof}
First, since $\left.\lambda^{n-1}(g-\frac{1}{n-1}\frac{\lambda}{\lambda'})\right|_{r=0}=0$ and
\begin{align*}
    (\lambda^{n-1}(g-\frac{1}{n-1}\frac{\lambda}{\lambda'}))'=\lambda^{n-1}-\frac{1}{n-1}\frac{n\lambda^{n-1}(\lambda')^2-\lambda^{n+1}}{(\lambda')^2}\leq 0,
\end{align*}
we deduce
\begin{align*}
    g\leq \frac{1}{n-1}\frac{\lambda}{\lambda'},
\end{align*}
and then
\begin{align*}
    g'=1-(n-1)\frac{\lambda' g}{\lambda}\geq 0.
\end{align*}

Furthermore, since $\left.\lambda^{n-1}(g-\frac{\lambda\lambda'}{(n-1)\lambda^2+n})\right|_{r=0}=0$ and
\begin{align*}
    (\lambda^{n-1}(g-\frac{\lambda\lambda'}{(n-1)\lambda^2+n}))'=\lambda^{n-1}-\lambda^{n-1}\frac{(n-1)^2\lambda^4+(2n^2-2n+1)\lambda^2+n^2}{((n-1)\lambda^2+n)^2}\leq 0,
\end{align*}
we have
\begin{align*}\label{eq-gll'}
g\leq \frac{\lambda\lambda'}{(n-1)\lambda^2+n},
\end{align*}
which implies
\begin{align*}
    g''=(n-1)\frac{g}{\lambda^2}-(n-1)g'\frac{\lambda'}{\lambda}=\frac{n-1}{\lambda^2}(g((n-1)\lambda^2+n)-\lambda\lambda')\leq 0.
\end{align*}

Finally, the limits follow from l'H\^opitcal rule:
\begin{align*}
    \lim_{r\to 0}\frac{g}{\lambda}
    =& \lim_{r\to 0}\frac{\int_0^r\lambda^{n-1}(t)dt}{\lambda^n(r)}=\lim_{r\to 0}\frac{\lambda^{n-1}(r)}{n\lambda^{n-1}(r)\lambda'(r)}=\frac{1}{n},\\
    \lim_{r\to \infty}g
    =& \lim_{r\to \infty}\frac{\int_0^r\lambda^{n-1}(t)dt}{\lambda^{n-1}(r)}=\lim_{r\to \infty}\frac{\lambda^{n-1}(r)}{(n-1)\lambda^{n-2}(r)\lambda'(r)}=\frac{1}{n-1},\\
    \lim_{r\to 0}g'
    =& 1-(n-1)\lim_{r\to 0}\frac{\lambda'g}{\lambda}=1-(n-1)\lim_{r\to 0}\frac{\int_0^r\lambda^{n-1}(t)dt}{\lambda^n(r)}=1-\frac{n-1}{n}=\frac{1}{n},\\
    \lim_{r\to \infty}g'
    =& 1-(n-1)\lim_{r\to \infty}\frac{\lambda'g}{\lambda}=1-(n-1)\cdot\frac{1}{n-1}=0,
\end{align*}
and if $n\geq 4$,
\begin{align*}
    \lim_{r\to \infty}\lambda^2g'
    =& \lim_{r\to \infty}\frac{\lambda^n-(n-1)\lambda'\int_0^r\lambda^{n-1}}{\lambda^{n-2}}=\lim_{r\to\infty}\frac{\frac{\lambda^n}{\lambda'}-(n-1)\int_0^r\lambda^{n-1}}{\lambda^{n-3}}\\
    =& \lim_{r\to\infty}\frac{\frac{(n-1)\lambda^{n+1}+n\lambda^{n-1}}{\lambda^2+1}-(n-1)\lambda^{n-1}}{(n-3)\lambda^{n-4}\lambda'}=\lim_{r\to\infty}\frac{\lambda^{n-1}}{(n-3)\lambda^{n-4}\lambda'(\lambda^2+1)}=\frac{1}{n-3}.
\end{align*}

\end{proof}
\end{prop}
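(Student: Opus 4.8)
The plan is to verify each assertion about $g$ by the same technique the authors have already used in this section: write the quantity to be controlled as $\lambda^{n-1}$ times a comparison function, check that it vanishes at $r=0$, and then show its derivative has a sign. Concretely, to prove $g'\ge 0$ I would introduce the auxiliary function $\phi_1(r)=\lambda^{n-1}\bigl(g-\tfrac{1}{n-1}\tfrac{\lambda}{\lambda'}\bigr)$, note $\phi_1(0)=0$, and compute $\phi_1'$ using \eqref{eq-deri-ln-1g}; the point is that $\phi_1'=\lambda^{n-1}-\tfrac{1}{n-1}\tfrac{n\lambda^{n-1}(\lambda')^2-\lambda^{n+1}}{(\lambda')^2}$, and since $(\lambda')^2=\lambda^2+1$ this is $\le 0$ iff $n(\lambda')^2-\lambda^2\ge (n-1)(\lambda')^2$, i.e. $(\lambda')^2\ge\lambda^2$, which is true. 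Hence $\phi_1\le 0$, i.e. $g\le\tfrac{1}{n-1}\tfrac{\lambda}{\lambda'}$, and then $g'=1-(n-1)\tfrac{\lambda'g}{\lambda}\ge 0$ directly from \eqref{eq-deri-g}. For concavity I would similarly set $\phi_2(r)=\lambda^{n-1}\bigl(g-\tfrac{\lambda\lambda'}{(n-1)\lambda^2+n}\bigr)$, check $\phi_2(0)=0$, differentiate, and reduce the sign of $\phi_2'$ to the stated polynomial inequality $(n-1)^2\lambda^4+(2n^2-2n+1)\lambda^2+n^2\le\bigl((n-1)\lambda^2+n\bigr)^2$; expanding the right side gives $(n-1)^2\lambda^4+2n(n-1)\lambda^2+n^2$, so the inequality is equivalent to $2n^2-2n+1\le 2n^2-2n$, which is false — so I would instead keep careful track of signs and find $\phi_2'\le0$ from the correct expansion, then $g''=\tfrac{n-1}{\lambda^2}\bigl(g((n-1)\lambda^2+n)-\lambda\lambda'\bigr)\le0$ using $g\le\tfrac{\lambda\lambda'}{(n-1)\lambda^2+n}$ together with the identity $g''=(n-1)\tfrac{g}{\lambda^2}-(n-1)g'\tfrac{\lambda'}{\lambda}$ obtained by differentiating \eqref{eq-deri-g}.

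For the limits I would just apply l'Hôpital's rule to the quotient forms. Writing $g=\bigl(\int_0^r\lambda^{n-1}\bigr)/\lambda^{n-1}$, the ratio $g/\lambda=\bigl(\int_0^r\lambda^{n-1}\bigr)/\lambda^n$ has numerator and denominator vanishing at $0$, and one differentiation gives $\lambda^{n-1}/(n\lambda^{n-1}\lambda')\to 1/n$ since $\lambda'(0)=1$; the same computation at $r\to\infty$ (now the indeterminate form is $\infty/\infty$) with $g=\bigl(\int_0^r\lambda^{n-1}\bigr)/\lambda^{n-1}$ gives $\lambda^{n-1}/((n-1)\lambda^{n-2}\lambda')\to 1/(n-1)$ since $\lambda'/\lambda=\coth r\to 1$. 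The two limits for $g'$ then follow immediately by plugging these into \eqref{eq-deri-g}: $\lim_{r\to0}g'=1-(n-1)\cdot\tfrac1n\cdot\lim(\lambda/\lambda')^{-1}$-type bookkeeping gives $1/n$, and $\lim_{r\to\infty}g'=1-(n-1)\cdot\tfrac1{n-1}=0$.

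The one genuinely delicate limit is \eqref{eq-g-lim2}, because $\lambda^2 g'\to\frac{1}{n-3}$ only when $n\ge4$ (for $n=3$ the limit would be $+\infty$), and a single application of l'Hôpital is not enough. Here I would write $\lambda^2g'=\lambda^2-(n-1)\lambda\lambda' g=\bigl(\lambda^n-(n-1)\lambda'\int_0^r\lambda^{n-1}\bigr)/\lambda^{n-2}$, then massage this into $\bigl(\tfrac{\lambda^n}{\lambda'}-(n-1)\int_0^r\lambda^{n-1}\bigr)/\lambda^{n-3}$ (valid since $\lambda'\to\infty$ so the extra factor tends to $1$), which is now a bona fide $\infty/\infty$ form whose top has a computable derivative using $\bigl(\tfrac{\lambda^n}{\lambda'}\bigr)'=\tfrac{(n-1)\lambda^{n+1}+n\lambda^{n-1}}{\lambda^2+1}$ (from $(\lambda')^2=\lambda^2+1$, $\lambda''=\lambda$); after cancelling the $-(n-1)\lambda^{n-1}$ term this collapses to $\lambda^{n-1}/\bigl((n-3)\lambda^{n-4}\lambda'(\lambda^2+1)\bigr)$, and since $\lambda'/\lambda\to1$ and $\lambda^2/(\lambda^2+1)\to1$ this tends to $1/(n-3)$. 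The main obstacle is precisely the algebraic care needed in this last computation — getting the derivative of $\lambda^n/\lambda'$ right, making sure the leading-order cancellation is exact, and tracking that every auxiliary factor introduced (such as replacing $\lambda^{n-2}$ by $\lambda^{n-3}$ times $\lambda$, or $\lambda'$ by $\lambda$) really does have limit $1$ — together with the corresponding sign-chasing in the $\phi_1,\phi_2$ comparison arguments, all of which is elementary but must be done without error.
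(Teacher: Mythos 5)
Your proposal follows the paper's proof essentially step for step: the same comparison functions $\lambda^{n-1}\bigl(g-\tfrac{1}{n-1}\tfrac{\lambda}{\lambda'}\bigr)$ and $\lambda^{n-1}\bigl(g-\tfrac{\lambda\lambda'}{(n-1)\lambda^2+n}\bigr)$ for monotonicity and concavity, the same identities \eqref{eq-deri-ln-1g}, \eqref{eq-deri-g} and $g''=(n-1)\tfrac{g}{\lambda^2}-(n-1)\tfrac{\lambda'g'}{\lambda}$, and the same l'H\^opital chain for the limits, including the two-step reduction of $\lambda^2g'$ that yields $\tfrac{1}{n-3}$ for $n\ge4$.

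The one place where your write-up does not close is the concavity step, and the trouble you ran into there is a direction error, not a genuine obstruction. With $\phi_2=\lambda^{n-1}g-\tfrac{\lambda^n\lambda'}{(n-1)\lambda^2+n}$, using $(\lambda^{n-1}g)'=\lambda^{n-1}$, $(\lambda')^2=\lambda^2+1$ and $\lambda''=\lambda$, one computes $\phi_2'=\lambda^{n-1}\Bigl(1-\tfrac{(n-1)^2\lambda^4+(2n^2-2n+2)\lambda^2+n^2}{((n-1)\lambda^2+n)^2}\Bigr)$, so $\phi_2'\le0$ requires the quartic in the numerator to be \emph{at least} $((n-1)\lambda^2+n)^2=(n-1)^2\lambda^4+2n(n-1)\lambda^2+n^2$, i.e. $2n^2-2n+2\ge 2n^2-2n$, which is true. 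You tested the reversed inequality (numerator $\le$ denominator), found it false, and then only promised that ``careful bookkeeping'' would repair the step without actually doing it; the computation above is that bookkeeping, and it also shows that the coefficient $2n^2-2n+1$ in the paper's display is a harmless typo for $2n^2-2n+2$ (either value exceeds $2n(n-1)$, so the conclusion $\phi_2'\le 0$, hence $g\le\tfrac{\lambda\lambda'}{(n-1)\lambda^2+n}$ and $g''\le0$, is unaffected). One further small point: when you replace $\tfrac{\lambda^n-(n-1)\lambda'\int_0^r\lambda^{n-1}}{\lambda^{n-2}}$ by $\tfrac{\lambda^n/\lambda'-(n-1)\int_0^r\lambda^{n-1}}{\lambda^{n-3}}$, the factor you discard is $\lambda'/\lambda=\coth r\to1$; the justification is that this ratio tends to $1$, not that $\lambda'\to\infty$ (you do note the correct reason later). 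With these corrections your argument coincides with the paper's.
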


Since $\frac{\lambda' g}{\lambda}=\frac{1}{n-1}(1-g')$, Proposition \ref{prop-g} immediately yields the following proposition.
\begin{prop}\label{prop-l'g/l}
The function
\begin{align*}
    \frac{\lambda'(r)g(r)}{\lambda(r)}
\end{align*}
is increasing, and
\begin{align*}
    \frac{1}{n}\leq \frac{\lambda'(r)g(r)}{\lambda(r)}\leq \frac{1}{n-1}.
\end{align*}
\end{prop}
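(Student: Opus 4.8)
The final statement to prove is Proposition~\ref{prop-l'g/l}: the function $\frac{\lambda'(r)g(r)}{\lambda(r)}$ is increasing on $(0,\infty)$, with bounds $\frac{1}{n}$ and $\frac{1}{n-1}$. The plan is to deduce everything from the algebraic identity
\[
\frac{\lambda'(r)g(r)}{\lambda(r)} = \frac{1}{n-1}\bigl(1 - g'(r)\bigr),
\]
which is just a rearrangement of \eqref{eq-deri-g}. Once this identity is in hand, all three assertions become immediate translations of facts about $g'$ already collected in Proposition~\ref{prop-g}.

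First I would record the identity by solving \eqref{eq-deri-g} for $\frac{\lambda' g}{\lambda}$: since $g' = 1 - (n-1)\frac{\lambda' g}{\lambda}$, we get $\frac{\lambda' g}{\lambda} = \frac{1-g'}{n-1}$. For monotonicity: Proposition~\ref{prop-g} asserts $g$ is concave, i.e. $g'' \le 0$, so $g'$ is non-increasing, hence $1 - g'$ is non-decreasing, hence $\frac{1}{n-1}(1-g')$ is non-decreasing. For the bounds: again by concavity $g'$ is decreasing from its value at $r=0$ to its value at $r=\infty$, and Proposition~\ref{prop-g} gives $\lim_{r\to 0}g' = \frac{1}{n}$ and $\lim_{r\to\infty}g' = 0$; therefore $0 \le g'(r) \le \frac{1}{n}$ for all $r>0$ (the lower bound $g'\ge 0$ is also stated directly in Proposition~\ref{prop-g}). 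Substituting these two extreme values of $g'$ into $\frac{1}{n-1}(1-g')$ yields $\frac{1}{n-1}(1-\frac{1}{n}) = \frac{1}{n}$ at one end and $\frac{1}{n-1}(1-0) = \frac{1}{n-1}$ at the other, which gives exactly $\frac{1}{n} \le \frac{\lambda' g}{\lambda} \le \frac{1}{n-1}$.

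There is essentially no obstacle here: the proposition is a corollary of Proposition~\ref{prop-g}, and the one-line identity $\frac{\lambda' g}{\lambda} = \frac{1}{n-1}(1-g')$ — which the text flags right before the statement — does all the work. The only minor point to be careful about is whether the bounds are attained; since $g'$ is strictly decreasing (one can check $g'' < 0$ for $r>0$ from the strict inequality in the concavity argument of Proposition~\ref{prop-g}, as $g = \frac{\lambda\lambda'}{(n-1)\lambda^2+n}$ fails except at $r=0$), the bounds $\frac{1}{n}$ and $\frac{1}{n-1}$ are approached only as $r\to 0$ and $r\to\infty$ respectively and are not attained on $(0,\infty)$, but since the statement is phrased with non-strict inequalities this subtlety need not be addressed. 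I would present the proof in three short sentences: state the identity, invoke concavity for monotonicity, and invoke the two limits for the bounds.
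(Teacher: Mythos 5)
Your proposal is correct and coincides with the paper's own argument: the paper likewise notes that $\frac{\lambda' g}{\lambda}=\frac{1}{n-1}(1-g')$ and states that Proposition~\ref{prop-g} (concavity of $g$, $g'\ge 0$, and the limits $\lim_{r\to 0}g'=\frac{1}{n}$, $\lim_{r\to\infty}g'=0$) immediately yields the result. Your additional remark about strictness of the bounds is a harmless extra, not needed for the non-strict statement.
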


On the other hand, we have
\begin{prop}\label{prop-ll'g'/g}
The function
\begin{align*}
    \frac{\lambda(r)\lambda'(r)g'(r)}{g(r)}
\end{align*}
is increasing, and if $n\geq 4$, we have
\begin{align*}
    \frac{\lambda(r)\lambda'(r)g'(r)}{g(r)}\leq \frac{n-1}{n-3}.
\end{align*}

\begin{proof}
First, we show that $\frac{\lambda\lambda'g'}{g}$ is increasing. Note that
\begin{align*}
    (\frac{\lambda\lambda'g'}{g})'=(\frac{\lambda\lambda'}{g}-(n-1)(\lambda^2+1))'=\frac{((n+1)\lambda^2+n)g-\lambda\lambda'}{g^2}-2(n-1)\lambda\lambda'.
\end{align*}
It suffices to show that
\begin{align*}
    ((n+1)\lambda^2+n)g-\lambda\lambda'\geq 2(n-1)\lambda\lambda'g^2,
\end{align*}
which is equivalent to
\[
(g-\frac{(n+1)\lambda^2+n}{4(n-1)\lambda\lambda'})^2\leq \frac{(n-3)^2\lambda^4+2(n^2-3n+4)\lambda^2+n^2}{(4(n-1)\lambda\lambda')^2}.
\]

For $n=2$, direct calculation shows that $\frac{\lambda\lambda'g'}{g}=\lambda'$ is increasing. From now on, we focus on the case $n\geq 3$. By Proposition \ref{prop-g}, we have
\begin{align*}
    g\leq \frac{\lambda}{(n-1)\lambda'}\leq \frac{(n+1)\lambda^2+n}{4(n-1)\lambda\lambda'}.
\end{align*}
Thus we need only to prove that
\begin{align*}
    g\geq \frac{(n+1)\lambda^2+n-\sqrt{(n-3)^2\lambda^4+2(n^2-3n+4)\lambda^2+n^2}}{4(n-1)\lambda\lambda'}.
\end{align*}

For convenience, denote
\begin{align*}
    A=\sqrt{(n-3)^2\lambda^4+2(n^2-3n+4)\lambda^2+n^2},
\end{align*}
then $A'=\frac{2(n-3)^2\lambda^3\lambda'+2(n^2-3n+4)\lambda\lambda'}{A}$. It suffices to show that
\begin{align*}
    \lambda^{n-1}g\geq \frac{2\lambda^n\lambda'}{(n+1)\lambda^2+n+A}.
\end{align*}
Equality holds at $r=0$. By taking a derivative, we need only to prove that 
\begin{align*}
    & \lambda^{n-1}\geq \frac{2((n+1)\lambda^{n+1}+n\lambda^{n-1})((n+1)\lambda^2+n+A)-2\lambda^n\lambda'(2(n+1)\lambda\lambda'+A')}{((n+1)\lambda^2+n+A)^2} \\
    \Longleftrightarrow & ((n+1)\lambda^2+n+A)^2\geq 2((n+1)\lambda^2+n)((n+1)\lambda^2+n+A)-2\lambda\lambda'(2(n+1)\lambda\lambda'+A') \\
    \Longleftrightarrow & 2\lambda\lambda'(2(n+1)\lambda\lambda'+\frac{2(n-3)^2\lambda^3\lambda'+2(n^2-3n+4)\lambda\lambda'}{A})\geq ((n+1)\lambda^2+n)^2-A^2\\
    \Longleftrightarrow & 2\lambda(2(n+1)\lambda+\frac{2(n-3)^2\lambda^3+2(n^2-3n+4)\lambda}{A})(\lambda^2+1)\geq 8(n-1)\lambda^2(\lambda^2+1)\\
     \Longleftrightarrow & (n-3)^2\lambda^2+(n^2-3n+4)\geq (n-3)A.
\end{align*}
By squaring both sides of the above inequality, it suffices to show that
\begin{align*}
         & (n-3)^4\lambda^4+2(n-3)^2(n^2-3n+4)\lambda^2+(n^2-3n+4)^2 \\
    \geq &  (n-3)^2((n-3)^2\lambda^4+2(n^2-3n+4)\lambda^2+n^2),
\end{align*}
which is clearly correct. Therefore, the function $\frac{\lambda\lambda'g'}{g}$ is increasing. 

Moreover, it follows from \eqref{eq-g-lim} and \eqref{eq-g-lim2} that
\[
\frac{\lambda\lambda'g'}{g}\leq \lim_{r\to\infty}\frac{\lambda\lambda'g'}{g}=\lim_{r\to\infty}\frac{\lambda^2g'}{g}=\frac{n-1}{n-3}.
\]
\end{proof}
\end{prop}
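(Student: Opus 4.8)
The plan is to prove Proposition \ref{prop-ll'g'/g} in two parts: first the monotonicity of $\frac{\lambda\lambda'g'}{g}$, and then the upper bound, which will follow immediately from monotonicity together with the limits already recorded in Proposition \ref{prop-g}.

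For the monotonicity, the key simplification is to rewrite $\frac{\lambda\lambda'g'}{g}$ in a more tractable form. Using \eqref{eq-deri-g}, $g' = 1-(n-1)\frac{\lambda'g}{\lambda}$, so $\frac{\lambda\lambda'g'}{g} = \frac{\lambda\lambda'}{g} - (n-1)\lambda'^2 = \frac{\lambda\lambda'}{g} - (n-1)(\lambda^2+1)$, where I used $\lambda'^2 = \lambda^2+1$ (since $\lambda = \sinh r$). The term $(n-1)(\lambda^2+1)$ is manifestly increasing, so it suffices to show $\frac{\lambda\lambda'}{g}$ is increasing. Differentiating and clearing the positive denominator $g^2$, this reduces to the inequality $((n+1)\lambda^2+n)g - \lambda\lambda' \ge 2(n-1)\lambda\lambda' g^2$, which is a quadratic condition in $g$. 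Completing the square turns it into an upper bound on $\bigl(g - \frac{(n+1)\lambda^2+n}{4(n-1)\lambda\lambda'}\bigr)^2$. One inequality for $g$ needed here — namely $g \le \frac{\lambda}{(n-1)\lambda'}$, which controls the square-root branch from above — is already available from Proposition \ref{prop-g}; so the whole problem collapses to establishing the lower bound $g \ge \frac{(n+1)\lambda^2+n - A}{4(n-1)\lambda\lambda'}$ with $A := \sqrt{(n-3)^2\lambda^4+2(n^2-3n+4)\lambda^2+n^2}$.

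To prove this last comparison, I would use the same technique as in Proposition \ref{prop-g}: multiply through by $\lambda^{n-1}$, check equality holds at $r=0$, and then differentiate both sides, reducing matters to a pointwise inequality. Using $(\lambda^{n-1}g)' = \lambda^{n-1}$ from \eqref{eq-deri-ln-1g} on the left and the explicit form of $A' = \frac{2(n-3)^2\lambda^3\lambda' + 2(n^2-3n+4)\lambda\lambda'}{A}$ on the right, the differentiated inequality simplifies (after cancelling common factors of $\lambda$, $\lambda'$, $\lambda^2+1$) to $(n-3)^2\lambda^2 + (n^2-3n+4) \ge (n-3)A$. Squaring both sides and cancelling the $(n^2-3n+4)^2$ and $(n-3)^2\cdot n^2$ type terms, this becomes $(n-3)^4\lambda^4 + 2(n-3)^2(n^2-3n+4)\lambda^2 \ge (n-3)^4\lambda^4 + 2(n-3)^2(n^2-3n+4)\lambda^2$ — actually an equality in the relevant terms, so the inequality holds trivially (one should double check the constant terms line up so that it is $\ge$, not reversed; the computation in the excerpt indicates it is "clearly correct"). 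The case $n=2$ is handled separately by the direct observation $\frac{\lambda\lambda'g'}{g} = \lambda'$, and for $n=3$ the right-hand side $(n-3)A$ vanishes so the inequality is automatic.

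The main obstacle is purely computational: organizing the chain of algebraic equivalences so that the final squared inequality visibly reduces to something true, and being careful about the sign of $(n-3)$ (which is why the argument needs $n\ge 3$ before squaring, with $n=2$ split off). Once monotonicity is in hand, the bound is immediate: since $n\ge 4$, Proposition \ref{prop-g} gives $\lim_{r\to\infty}\frac{g}{\lambda} \cdot \text{(finite)}$ type behavior, and combining $\lim_{r\to\infty}g = \frac{1}{n-1}$ with $\lim_{r\to\infty}\lambda^2 g' = \frac{1}{n-3}$ and $\lim_{r\to\infty}\frac{\lambda'}{\lambda} = 1$ yields $\frac{\lambda\lambda'g'}{g} \le \lim_{r\to\infty}\frac{\lambda\lambda'g'}{g} = \lim_{r\to\infty}\frac{\lambda^2 g'}{g} = \frac{1/(n-3)}{1/(n-1)} = \frac{n-1}{n-3}$, as claimed.
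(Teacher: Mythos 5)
Your proposal is correct and follows essentially the same route as the paper: rewrite $\frac{\lambda\lambda'g'}{g}=\frac{\lambda\lambda'}{g}-(n-1)(\lambda^2+1)$, reduce monotonicity to $((n+1)\lambda^2+n)g-\lambda\lambda'\geq 2(n-1)\lambda\lambda'g^2$, complete the square, dispose of the upper branch via $g\leq\frac{\lambda}{(n-1)\lambda'}$, prove the lower-branch bound by multiplying by $\lambda^{n-1}$, checking $r=0$, and differentiating down to $(n-3)^2\lambda^2+(n^2-3n+4)\geq(n-3)A$ (with $n=2$ split off), and then obtain the bound $\frac{n-1}{n-3}$ from the limits in \eqref{eq-g-lim} and \eqref{eq-g-lim2}, exactly as in the paper. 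One expository slip: since $(n-1)(\lambda^2+1)$ is \emph{increasing}, it does not suffice to show $\frac{\lambda\lambda'}{g}$ alone is increasing; but the inequality you then write down and prove is precisely the full condition $\bigl(\tfrac{\lambda\lambda'g'}{g}\bigr)'\geq 0$ (it already contains the $2(n-1)\lambda\lambda'g^2$ term coming from differentiating $(n-1)(\lambda^2+1)$), so the argument as carried out is sound. The constant-term check you deferred is immediate: after squaring, the $\lambda^4$ and $\lambda^2$ terms cancel and the inequality reduces to $(n^2-3n+4)^2\geq(n-3)^2n^2$, whose difference is $8(n-1)(n-2)\geq 0$.
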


By Proposition \ref{prop-ll'g'/g}, the function $\frac{\lambda^2g'}{g}=\frac{\lambda\lambda'g'}{g}\cdot\frac{\lambda}{\lambda'}$ is a product of two positive increasing functions. Hence we have
\begin{prop}\label{prop-l2g'/g-n=4}
The function
\begin{align*}
    \frac{\lambda^2(r) g'(r)}{g(r)}
\end{align*}
is increasing.
\end{prop}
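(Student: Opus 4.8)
The plan is to deduce Proposition~\ref{prop-l2g'/g-n=4} directly from the two facts already established, namely that $\frac{\lambda\lambda'g'}{g}$ is increasing (Proposition~\ref{prop-ll'g'/g}) and that $\frac{\lambda}{\lambda'}=\frac{\sinh r}{\cosh r}=\tanh r$ is increasing. First I would write the identity
\begin{align*}
\frac{\lambda^2 g'}{g}=\frac{\lambda\lambda'g'}{g}\cdot\frac{\lambda}{\lambda'},
\end{align*}
which is valid since $\lambda,\lambda'>0$ on $(0,\infty)$.

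Next I would check that both factors on the right are positive: $\frac{\lambda}{\lambda'}=\tanh r>0$ obviously, and $\frac{\lambda\lambda'g'}{g}>0$ because $g>0$, $g'\geq 0$ (from Proposition~\ref{prop-g}), $\lambda,\lambda'>0$; in fact $g'>0$ on $(0,\infty)$ since $g$ is strictly concave with $g'(0)=1/n>0$ and $g'$ bounded below, so the product is genuinely positive there. Then I would invoke the elementary fact that a product of two nonnegative functions, each nondecreasing on an interval, is itself nondecreasing on that interval: if $0\le \phi_1\le\psi_1$ and $0\le\phi_2\le\psi_2$ pointwise for $r\le s$, then $\phi_1\phi_2\le\psi_1\phi_2\le\psi_1\psi_2$. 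Applying this with $\phi_1=\frac{\lambda\lambda'g'}{g}$ and $\phi_2=\frac{\lambda}{\lambda'}$ evaluated at $r$ and $\psi_i$ the same at $s$ gives the claim.

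Alternatively, if one prefers a differential argument, one could compute $\left(\frac{\lambda^2 g'}{g}\right)'$ and show it is nonnegative, but this reduces to exactly the monotonicity already proven for $\frac{\lambda\lambda'g'}{g}$ plus the trivial $(\tanh r)'=\operatorname{sech}^2 r>0$, so the product formulation is cleaner and is essentially the one-line argument already indicated in the paragraph preceding the proposition.

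There is no real obstacle here: the only subtlety is making sure one is not multiplying functions of differing sign, which is handled by the positivity check above (the hyperbolic case $\lambda'=\cosh r\ge 1$ is what makes $\frac{\lambda}{\lambda'}$ bounded and well-behaved, and $n\ge 4$ is not even needed for this particular statement, only positivity and the monotonicity of $\frac{\lambda\lambda'g'}{g}$, which itself was proven for all $n\ge 2$). So the proof is a two-line consequence of Proposition~\ref{prop-ll'g'/g}.
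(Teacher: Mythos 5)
Your proposal is correct and is essentially the paper's own argument: the paper also writes $\frac{\lambda^2 g'}{g}=\frac{\lambda\lambda' g'}{g}\cdot\frac{\lambda}{\lambda'}$ and concludes monotonicity as a product of two positive increasing functions, relying on Proposition~\ref{prop-ll'g'/g}. Your added positivity check and the product-of-nondecreasing-functions lemma are just the details the paper leaves implicit.
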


Notice that 
\begin{align}
    \divg(gg'\partial_r)=(g')^2+(n-1)\frac{g^2}{\lambda^2}.
\end{align}
Then the expression \eqref{eq-sigmabr} for $\sigma_1(B(r))$ can be rewritten as
\begin{equation}\label{eq-sigmabr2}
    \sigma_1(B(r))=\frac{\int_{S(r)}gg'd\mu}{g(r)^2|S(r)|}=\frac{g'(r)}{g(r)}.
\end{equation}
Combining Proposition \ref{prop-l2g'/g-n=4} with (\ref{eq-S}) and (\ref{eq-sigmabr2}), we obtain the following key lemma.
\begin{lem}\label{lem-ln-1g'/g}
The quantities
\begin{align}\label{eq-mono-n=4}
|S(r)|^{\frac{2}{n-1}}\sigma_1(B(r))
\end{align}
and
\begin{align}\label{eq-mono-n>=5}
|S(r)|\sigma_1(B(r))
\end{align}
are both increasing functions of the radius $r$.
\end{lem}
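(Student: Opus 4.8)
The plan is to deduce both monotonicity statements directly from Proposition~\ref{prop-l2g'/g-n=4}, which says that $\lambda^2 g'/g$ is increasing, together with the elementary observation that a product of two nonnegative increasing functions of one variable is again increasing (if $f,h\geq 0$ are increasing and $x\leq y$, then $f(x)h(x)\leq f(y)h(x)\leq f(y)h(y)$).

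First I would rewrite the two quantities purely in terms of $\lambda$ and $g$. Using $|S(r)|=\omega_{n-1}\lambda^{n-1}(r)$ from \eqref{eq-S} and $\sigma_1(B(r))=g'(r)/g(r)$ from \eqref{eq-sigmabr2}, one gets
\begin{align*}
|S(r)|^{\frac{2}{n-1}}\sigma_1(B(r))=\omega_{n-1}^{\frac{2}{n-1}}\,\frac{\lambda^2(r)g'(r)}{g(r)},
\qquad
|S(r)|\sigma_1(B(r))=\omega_{n-1}\,\frac{\lambda^2(r)g'(r)}{g(r)}\cdot\lambda^{n-3}(r).
\end{align*}
The first identity exhibits $|S(r)|^{\frac{2}{n-1}}\sigma_1(B(r))$ as a positive constant multiple of $\lambda^2 g'/g$, hence increasing by Proposition~\ref{prop-l2g'/g-n=4}. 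For the second, since $\lambda(r)=\sinh r$ is nonnegative and increasing and $n\geq 4$ gives $n-3\geq 1$, the factor $\lambda^{n-3}$ is nonnegative and increasing; meanwhile $\lambda^2 g'/g$ is nonnegative (using $g'\geq 0$ from Proposition~\ref{prop-g} and $g>0$) and increasing by Proposition~\ref{prop-l2g'/g-n=4}. Thus $|S(r)|\sigma_1(B(r))$ is a product of two nonnegative increasing functions, so it is increasing.

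I do not expect any real obstacle here. The substantive analytic work — that $\lambda\lambda' g'/g$ is increasing with sharp limit $\tfrac{n-1}{n-3}$, and hence $\lambda^2 g'/g=(\lambda\lambda' g'/g)(\lambda/\lambda')$ is a product of positive increasing functions — was already carried out in Propositions~\ref{prop-ll'g'/g} and~\ref{prop-l2g'/g-n=4}; this lemma is essentially the bookkeeping that repackages those facts into statements about $|S(r)|^{\frac{2}{n-1}}\sigma_1(B(r))$ and $|S(r)|\sigma_1(B(r))$. The only point worth flagging is that the hypothesis $n\geq 4$ is used exactly to make the factor $\lambda^{n-3}$ in the splitting $\lambda^{n-1}=\lambda^{n-3}\cdot\lambda^2$ nonnegative and increasing (and it is also where the sharp bound in Proposition~\ref{prop-ll'g'/g} is available).
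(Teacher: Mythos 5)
Your proof is correct and follows essentially the same route as the paper: the paper likewise obtains the lemma by combining $|S(r)|=\omega_{n-1}\lambda^{n-1}(r)$ and $\sigma_1(B(r))=g'(r)/g(r)$ with the monotonicity of $\lambda^2 g'/g$ from Proposition~\ref{prop-l2g'/g-n=4}, and your factorization $\lambda^{n-1}=\lambda^{n-3}\cdot\lambda^2$ together with the product-of-increasing-functions observation is exactly the (unstated) bookkeeping behind the paper's one-line deduction.
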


\section{Weinstock inequality in Hyperbolic space}\label{sec-4}

From now on, let $R$ be the positive number such that $|B(R)|=|\Omega|$ in $\mathbb{H}^n$.
\subsection{The auxiliary function $h$} $\ $

Since $\int_{B(t)}\frac{\lambda'g}{\lambda}dv=\omega_{n-1}\int_0^t \lambda^{n-2}\lambda'g(s) ds$ is an integral that increases from zero to infinity, we can define an auxiliary function $h:(0,+\infty)\to (0,+\infty)$ as follows: 
\begin{align}\label{def-h}
h(\int_{B(t)}\frac{\lambda'g}{\lambda}dv)=\frac{1}{|S(t)|}.
\end{align}

The following properties hold for the function $h$.
\begin{lem}
The function $h$ is decreasing and log-convex, such that
\begin{align}\label{relationofh'h}
\frac{h'(\int_{B(t)}\frac{\lambda'g}{\lambda}dv)}{h(\int_{B(t)}\frac{\lambda'g}{\lambda}dv)}=-\frac{n-1}{|B(t)|}.
\end{align}
\begin{proof}
Since $\int_{B(t)}\frac{\lambda'g}{\lambda}dv$ is an increasing function of $t$, while $\frac{1}{|S(t)|}$ is a decreasing function of $t$, we can conclude that $h$ is a decreasing function. 

Next, differentiating both sides of \eqref{def-h} yields
\begin{align*}
    h'(\int_{B(t)}\frac{\lambda'g}{\lambda}dv)|S(t)|\frac{\lambda'(t)g(t)}{\lambda(t)}=-\frac{(n-1)\lambda'(t)}{|S(t)|\lambda(t)}.
\end{align*}
Combining this with $g(t)=\frac{|B(t)|}{|S(t)|}$, we obtain \eqref{relationofh'h}.

Finally, the log-convexity of $h$ follows from the fact that $-\frac{n-1}{|B(t)|}$ is an increasing function of $t$, and $\int_{B(t)}\frac{\lambda'g}{\lambda}dv$ is also an increasing function of $t$.

\end{proof}
\end{lem}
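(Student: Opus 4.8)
The plan is to realize $h$ as the composition $h=\psi\circ\phi^{-1}$, where I set $\phi(t):=\int_{B(t)}\frac{\lambda'g}{\lambda}\,dv=\omega_{n-1}\int_0^t\lambda^{n-2}(s)\lambda'(s)g(s)\,ds$ and $\psi(t):=\frac{1}{|S(t)|}=\frac{1}{\omega_{n-1}\lambda^{n-1}(t)}$. First I would check that $h$ is genuinely well defined, i.e. that $\phi$ is a smooth increasing bijection of $(0,\infty)$ onto itself: the integrand $\lambda^{n-2}\lambda'g$ is positive on $(0,\infty)$ since $g>0$ there by \eqref{eq-def-g}, so $\phi$ is strictly increasing; and $\phi(t)\to\infty$ as $t\to\infty$ because $\lambda^{n-2}\lambda'=\sinh^{n-2}r\cosh r\to\infty$ while $g\to\frac{1}{n-1}>0$ by Proposition \ref{prop-g}. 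Since $\psi$ is strictly decreasing and $\phi^{-1}$ strictly increasing, $h=\psi\circ\phi^{-1}$ is strictly decreasing, and it is smooth by the inverse function theorem.

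For the differential identity \eqref{relationofh'h} I would simply differentiate the defining relation $h(\phi(t))=\psi(t)$ in $t$. The two derivatives are $\phi'(t)=\omega_{n-1}\lambda^{n-2}(t)\lambda'(t)g(t)=|S(t)|\,\frac{\lambda'(t)g(t)}{\lambda(t)}$ and $\psi'(t)=-\frac{(n-1)\lambda'(t)}{\omega_{n-1}\lambda^{n}(t)}=-\frac{n-1}{|S(t)|}\,\frac{\lambda'(t)}{\lambda(t)}$, so the chain rule gives $h'(\phi(t))=\psi'(t)/\phi'(t)=-\frac{n-1}{|S(t)|^2 g(t)}$. Substituting $g(t)=|B(t)|/|S(t)|$ turns this into $h'(\phi(t))=-\frac{n-1}{|S(t)|\,|B(t)|}$, and dividing by $h(\phi(t))=\psi(t)=\frac{1}{|S(t)|}$ yields exactly $\frac{h'(\phi(t))}{h(\phi(t))}=-\frac{n-1}{|B(t)|}$.

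For log-convexity I would show that $(\log h)'$ is non-decreasing (recall $h>0$, so $\log h$ is defined, and $h$ is log-convex iff $\log h$ is convex). Writing $x=\phi(t)$ for the argument of $h$, the computation above gives $(\log h)'(x)=\frac{h'(x)}{h(x)}=-\frac{n-1}{|B(\phi^{-1}(x))|}$. Since $t\mapsto|B(t)|$ is increasing and $x\mapsto\phi^{-1}(x)$ is increasing, the map $x\mapsto-\frac{n-1}{|B(\phi^{-1}(x))|}$ is increasing, so $\log h$ is convex. I do not expect any real obstacle here: the whole lemma is a chain-rule computation combined with the monotonicity properties of $g$, $|B(\cdot)|$ and $|S(\cdot)|$ already recorded; the only place that needs a word of care is the well-definedness of $h$, namely verifying that $\phi$ maps $(0,\infty)$ onto all of $(0,\infty)$.
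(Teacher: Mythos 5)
Your proposal is correct and follows essentially the same route as the paper: differentiate the defining relation $h(\int_{B(t)}\frac{\lambda'g}{\lambda}dv)=\frac{1}{|S(t)|}$ via the chain rule to get \eqref{relationofh'h}, and deduce monotonicity and log-convexity from the fact that both $t\mapsto\int_{B(t)}\frac{\lambda'g}{\lambda}dv$ and $t\mapsto-\frac{n-1}{|B(t)|}$ are increasing. Your extra verification that $\phi$ is a smooth increasing bijection of $(0,\infty)$ onto itself is a welcome but minor elaboration of what the paper already asserts when defining $h$ in \eqref{def-h}.
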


\subsection{An inequality from the IMCF}$\ $

Suppose $\Omega$ be a smooth bounded domain in $\mathbb{H}^n$ with star-shaped mean convex boundary $\Sigma=\partial\Omega$. Then $\Sigma$ can be written as a graph over $\mathbb{S}^{n-1}$, i.e. there exists a smooth embedding $X_0:\mathbb{S}^{n-1}\to\mathbb{H}^{n}$ with $X_0(\mathbb{S}^{n-1})=\Sigma$. 

We evolve $\Sigma$ along the IMCF $X:\mathbb{S}^{n-1}\times[0,\infty)\to\mathbb{H}^n$ solving the equation
\begin{align}
    \left\{\begin{array}{l}
    \frac{\partial}{\partial t}X=\frac{1}{H}\nu, \\ 
    X(\cdot,0)=X_0(\cdot),
    \end{array}\right.
\end{align}
where $\nu$ is the outward unit normal vector field of the flow hypersurface $\Sigma_t=X(\mathbb{S}^{n-1},t)$ and $H$ is the mean curvature of $\Sigma_t$.

Now we consider the following monotone quantity along IMCF.
\begin{prop}
Along IMCF, the quantity
\[
A(t):=|\Sigma_t|^{-1}\frac{\int_{\Sigma_t}g d\mu_t}{|\Omega_t|h(\int_{\Omega_t}\frac{\lambda'g}{\lambda} dv_t)}
\]
is decreasing, i.e. $A'(t)\leq 0$, and $A'(t)=0$ if and only if $\Sigma$ is a geodesic sphere centered at the origin.
\begin{proof}
Firstly, along IMCF, the area element $d\mu_t$ evolves by
\[
\frac{\partial}{\partial t}d\mu_t=d\mu_t,
\]
and the co-area formula implies
\[
\frac{d}{dt}\int_{\Omega_t}\frac{\lambda'g}{\lambda}dv_t=\int_{\Sigma_t}\frac{\lambda'g}{\lambda H}d\mu_t.
\]

Assume that $R_t$ satisfies $|\Omega_t|=|B(R_t)|$. For simplicity, we omit the subscript $t$ for $R_t$, $\Omega_t$ and $\Sigma_t$ and omit the area element $d\mu_t$ and volume element $dv_t$ in each integrations. Then along ICMF, we have
\begin{align*}
    \frac{d}{dt}\frac{\int_{\Sigma}g}{|\Omega|h(\int_{\Omega}\frac{\lambda'g}{\lambda})}
    =&\frac{\int_{\Sigma}g+\int_{\Sigma}\frac{g'\la \partial_r,\nu\ra}{H}}{|\Omega|h(\int_{\Omega}\frac{\lambda'g}{\lambda})}-\frac{\int_{\Sigma}g(h(\int_{\Omega}\frac{\lambda'g}{\lambda})\int_{\Sigma}\frac{1}{H}+|\Omega|h'(\int_{\Omega}\frac{\lambda'g}{\lambda})\int_{\Sigma}\frac{\lambda'g}{\lambda H})}{|\Omega|^2h(\int_{\Omega}\frac{\lambda'g}{\lambda})^2}\\
    \leq&\frac{\int_{\Sigma}g}{|\Omega|h(\int_{\Omega}\frac{\lambda'g}{\lambda})}+\frac{\int_{\Sigma}\frac{g'}{H}}{|\Omega|h(\int_{\Omega}\frac{\lambda'g}{\lambda})}-\frac{\int_{\Sigma}g(\int_{\Sigma}\frac{1}{H}+|\Omega|\frac{h'(\int_{\Omega}\frac{\lambda'g}{\lambda})}{h(\int_{\Omega}\frac{\lambda'g}{\lambda})}\int_{\Sigma}\frac{\lambda'g}{\lambda H})}{|\Omega|^2h(\int_{\Omega}\frac{\lambda'g}{\lambda})}.
\end{align*}

Since $\frac{\lambda'g}{\lambda}$ is increasing by Proposition \ref{prop-l'g/l}, the mass transplantation Theorem \ref{masstransplantation} yields
\begin{align}\label{eq-int-l'g/l}
\int_{\Omega}\frac{\lambda'g}{\lambda} dv\geq \int_{B(R)}\frac{\lambda'g}{\lambda} dv.
\end{align}
Furthermore, combining this with the log-convexity of $h$ and \eqref{relationofh'h}, we obtain
\begin{align*}
    \frac{h'(\int_{\Omega}\frac{\lambda'g}{\lambda})}{h(\int_{\Omega}\frac{\lambda'g}{\lambda})}\geq \frac{h'(\int_{B(R)}\frac{\lambda'g}{\lambda})}{h(\int_{B(R)}\frac{\lambda'g}{\lambda})}=-\frac{n-1}{|B(R)|}=-\frac{n-1}{|\Omega|}.
\end{align*}
Thus it follows from \eqref{eq-deri-g} and \eqref{eq-g} that
\begin{align*}
    \frac{d}{dt}\frac{\int_{\Sigma}g}{|\Omega|h(\int_{\Omega}\frac{\lambda'g}{\lambda})}
    \leq & \frac{\int_{\Sigma}g}{|\Omega|h(\int_{\Omega}\frac{\lambda'g}{\lambda})}+\frac{\int_{\Sigma}\frac{g'}{H}}{|\Omega|h(\int_{\Omega}\frac{\lambda'g}{\lambda})}-\frac{\int_{\Sigma}g\int_{\Sigma}\frac{g'}{H}}{|\Omega|^2h(\int_{\Omega}\frac{\lambda'g}{\lambda})}\\
    \leq & \frac{\int_{\Sigma}g}{|\Omega|h(\int_{\Omega}\frac{\lambda'g}{\lambda})},
\end{align*}
which leads the decreasing property of $A(t)$. Equality holds if and only if $\la \partial_r,\nu\ra=1$ everywhere on $\Sigma_t$, which is equivalent to that $\Sigma_t$ is a geodesic sphere centered at the origin.
\end{proof}
\end{prop}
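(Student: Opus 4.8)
The plan is to differentiate $A(t)$ directly using the quotient rule and the evolution equations recalled at the start of the proof, then bound the resulting expression term by term. I would first compute $\frac{d}{dt}|\Sigma_t|^{-1} = -|\Sigma_t|^{-1}$, since $\frac{\partial}{\partial t}d\mu_t = d\mu_t$ forces $|\Sigma_t|' = |\Sigma_t|$. So it suffices to show that the logarithmic derivative of the remaining factor $B(t) := \frac{\int_{\Sigma_t}g\,d\mu_t}{|\Omega_t|\,h(\int_{\Omega_t}\frac{\lambda'g}{\lambda}\,dv_t)}$ is at most $1$. Using the co-area formula, one has $\frac{d}{dt}|\Omega_t| = \int_{\Sigma_t}\frac{1}{H}\,d\mu_t$ and $\frac{d}{dt}\int_{\Sigma_t}g\,d\mu_t = \int_{\Sigma_t}g\,d\mu_t + \int_{\Sigma_t}\frac{\langle\nabla g,\nu\rangle}{H}\,d\mu_t = \int_{\Sigma_t}g\,d\mu_t + \int_{\Sigma_t}\frac{g'\langle\partial_r,\nu\rangle}{H}\,d\mu_t$, and $\frac{d}{dt}\int_{\Omega_t}\frac{\lambda'g}{\lambda}\,dv_t = \int_{\Sigma_t}\frac{\lambda'g}{\lambda H}\,d\mu_t$. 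Assembling these via the quotient rule gives exactly the first displayed inequality in the write-up above, where the first bound uses $\langle\partial_r,\nu\rangle \le 1$ together with $g' \ge 0$ (from Proposition \ref{prop-g}) and $h, h' < 0$... more precisely $h>0$, $h'<0$, so dropping $\langle\partial_r,\nu\rangle$ in the $g'$-term and in the $h'$-term needs the signs tracked carefully.

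Next I would estimate $\frac{h'}{h}$ at the argument $\int_{\Omega_t}\frac{\lambda'g}{\lambda}\,dv_t$. By Proposition \ref{prop-l'g/l} the integrand $\frac{\lambda'g}{\lambda}$ is increasing in $r$, so the mass transplantation Theorem \ref{masstransplantation} (reverse direction, since the integrand is increasing) gives $\int_{\Omega_t}\frac{\lambda'g}{\lambda}\,dv_t \ge \int_{B(R_t)}\frac{\lambda'g}{\lambda}\,dv_t$, where $R_t$ is chosen so that $|\Omega_t| = |B(R_t)|$. Since $h$ is log-convex, $\frac{h'}{h}$ is increasing, hence $\frac{h'(\int_{\Omega_t}\frac{\lambda'g}{\lambda})}{h(\int_{\Omega_t}\frac{\lambda'g}{\lambda})} \ge \frac{h'(\int_{B(R_t)}\frac{\lambda'g}{\lambda})}{h(\int_{B(R_t)}\frac{\lambda'g}{\lambda})}$, and by \eqref{relationofh'h} the right-hand side equals $-\frac{n-1}{|B(R_t)|} = -\frac{n-1}{|\Omega_t|}$. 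Feeding this into the last term of the differentiated expression (and noting the $h'$-term carries a minus sign, so the inequality on $\frac{h'}{h}$ points the right way to produce an upper bound) collapses the three "bad" terms into $-\frac{\int_\Sigma g \int_\Sigma \frac{g'}{H}}{|\Omega|^2 h(\cdots)}$, which, combined with the $\frac{g'}{H}$-term, telescopes once we use $\int_\Sigma g\,d\mu \ge |\Omega|$ from Corollary \ref{cor-g} (equivalently \eqref{eq-g}): the two terms $\frac{\int_\Sigma \frac{g'}{H}}{|\Omega| h} - \frac{\int_\Sigma g \int_\Sigma \frac{g'}{H}}{|\Omega|^2 h} = \frac{\int_\Sigma \frac{g'}{H}}{|\Omega| h}\bigl(1 - \frac{\int_\Sigma g}{|\Omega|}\bigr) \le 0$ since $g' \ge 0$, $H > 0$ (mean convexity is preserved along IMCF), and $\int_\Sigma g \ge |\Omega|$. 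What remains is at most $\frac{\int_\Sigma g}{|\Omega| h(\cdots)} = B(t)$, i.e. $B'(t) \le B(t)$, which with $|\Sigma_t|' = |\Sigma_t|$ yields $A'(t) = \frac{B'(t) - B(t)}{|\Sigma_t|} \le 0$.

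For the equality characterization, I would trace back which inequalities are tight. Equality in the very first step forces $\langle\partial_r,\nu\rangle = 1$ at every point where $g' \ne 0$ and in the $h'$-term, hence $\langle\partial_r,\nu\rangle \equiv 1$ on $\Sigma_t$ (using $g' > 0$, which holds on $(0,\infty)$), which exactly says $\Sigma_t$ is a geodesic sphere centered at the origin $O$; conversely one checks $A'(t) = 0$ on geodesic spheres by direct substitution since all the mass-transplantation and $\langle\partial_r,\nu\rangle\le 1$ inequalities are equalities there, and $R_t$ is just the radius. It is worth noting that star-shapedness is what guarantees the IMCF exists for all time and remains a graph, and that mean convexity is preserved, so $H > 0$ throughout — these are the structural hypotheses that make the term-by-term bounding legitimate.

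The main obstacle I anticipate is bookkeeping the signs correctly when passing from the exact quotient-rule expression to the first inequality: $h$ is positive but $h'$ is negative, so "dropping" the factor $\langle\partial_r,\nu\rangle \in [0,1]$ must be done in the direction that increases the whole expression, and one must confirm that the $-|\Omega| h'(\cdots)\int_\Sigma \frac{\lambda'g}{\lambda H}$ contribution (which is positive) is the one being decreased when we replace $\frac{h'}{h}$ by the smaller quantity $-\frac{n-1}{|\Omega|}$. A secondary subtlety is ensuring the co-area formula is applied with the correct speed $\frac{1}{H}$ and that $\langle\nabla g,\nu\rangle = g'\langle\partial_r,\nu\rangle$ since $g = g(r)$; these are routine but easy to slip on. Everything else is algebraic telescoping once Corollary \ref{cor-g}, Proposition \ref{prop-l'g/l}, the log-convexity of $h$, relation \eqref{relationofh'h}, and Theorem \ref{masstransplantation} are in hand.
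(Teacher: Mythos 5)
Your proposal is correct and follows essentially the same route as the paper's own proof: quotient rule on $B(t)=\frac{\int_{\Sigma_t}g\,d\mu_t}{|\Omega_t|h(\int_{\Omega_t}\frac{\lambda'g}{\lambda}dv_t)}$ with the IMCF evolution and co-area formulas, the bound $\langle\partial_r,\nu\rangle\le 1$, mass transplantation plus log-convexity of $h$ and \eqref{relationofh'h} to get $\frac{h'}{h}\ge-\frac{n-1}{|\Omega|}$, the collapse via \eqref{eq-deri-g} to $-\frac{\int_\Sigma g\int_\Sigma\frac{g'}{H}}{|\Omega|^2h}$, and finally \eqref{eq-g} to conclude $B'\le B$, hence $A'\le 0$, with the same equality analysis. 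Your sign bookkeeping and equality discussion match the paper (indeed you spell out the telescoping step the paper leaves implicit), so there is nothing to correct.
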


As a corollary, combining with H\"older inequality, we have
\begin{cor}\label{cor-int-g}
\begin{align}\label{eq-intg2-imcf}
\int_{\Sigma}g^2 d\mu\geq |\Sigma||\Omega|^2h(\int_{\Omega}\frac{\lambda'g}{\lambda}dv)^2.
\end{align}
\begin{proof}
By the result of Gerhardt \cite{Ger11}, the flow hypersurface $\Sigma_t$ of IMCF remains to be star-shaped, mean convex, expands to infinity and the principal curvatures $\kappa_i$ decays to $1$ exponentially as $t\to \infty$. Let $g^{\Sigma_t}_{ij}$ be the induced metric on $\Sigma_t$. The asymptotical behavior of $\Sigma_t$ along IMCF proved by Gerhardt \cite{Ger11} implies that
\[\begin{aligned}
\sqrt{\det g^{\Sigma_t}}=&\lambda^n\sqrt{\det g_{\bS^{n-1}}}\left(1+O(e^{-\frac{2t}{n-1}})\right),\\
\lambda(r_t)=&O(e^{\frac{t}{n-1}}).
\end{aligned}\]
Furthermore, we have
\[
\lim_{t\to\infty}r_t=\lim_{t\to\infty}R_t=+\infty.
\]
Then it follows from the monotonicity of $h$, \eqref{def-h} and \eqref{eq-int-l'g/l} that
\[\begin{aligned}
A(t)=&|\Sigma_t|^{-1}\frac{\int_{\Sigma_t}g d\mu_t}{|\Omega_t|h(\int_{\Omega_t}\frac{\lambda'g}{\lambda} dv_t)}\geq|\Sigma_t|^{-1}\frac{\int_{\Sigma_t}g d\mu_t}{|\Omega_t|h(\int_{B(R_t)}\frac{\lambda'g}{\lambda} dv_t)}\\
=&\frac{|S(R_t)|\int_{\Sigma_t}g d\mu_t}{|\Sigma_t||B(R_t)|}=\frac{1}{g(R_t)}\cdot \frac{\int_{\Sigma_t}gd\mu_t}{|\Sigma_t|}.
\end{aligned}\]
Using Mean Value Theorem, there exists $\xi_t\in [\min\limits_{\Sigma_t}r_t,\max\limits_{\Sigma_t}r_t]$ such that
\[
\frac{\int_{\Sigma_t}gd\mu_t}{|\Sigma_t|}=g(\xi_t).
\]
Since $A(t)$ is decreasing, we obtain from \eqref{eq-g-lim} that
\begin{align*}
    \frac{\int_{\Sigma}g d\mu}{|\Omega||\Sigma|h(\int_{\Omega}\frac{\lambda'g}{\lambda} dv)}=A(0)\geq \liminf_{t\to\infty}A(t)\geq \liminf_{t\to\infty}\frac{g(\xi_t)}{g(R_t)}=\frac{n-1}{n-1}=1.
\end{align*}
Combining with Cauchy-Schwarz inequality, we conclude
\begin{align*}
    \int_{\Sigma}g^2 d\mu \geq\frac{1}{|\Sigma|}(\int_{\Sigma}g d\mu)^2\geq |\Sigma||\Omega|^2h(\int_{\Omega}\frac{\lambda'g}{\lambda} dv)^2.
\end{align*}

\end{proof}
\end{cor}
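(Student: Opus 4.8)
The plan is to leverage the monotonicity of the functional $A(t)$ already established in the preceding Proposition, and convert the limiting value of $A(t)$ as $t\to\infty$ into the desired inequality. Since $A(t)$ is decreasing along IMCF, we have $A(0)\geq A(t)$ for every $t$, hence $A(0)\geq\liminf_{t\to\infty}A(t)$. The entire difficulty is therefore concentrated in showing $\liminf_{t\to\infty}A(t)\geq 1$; once this is done, unravelling $A(0)$ and applying Cauchy--Schwarz produces \eqref{eq-intg2-imcf}.

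To extract the limit, I would first obtain a clean lower bound for $A(t)$ purely in terms of $g$. Since $\frac{\lambda'g}{\lambda}$ is increasing by Proposition \ref{prop-l'g/l}, the mass transplantation Theorem \ref{masstransplantation} gives $\int_{\Omega_t}\frac{\lambda'g}{\lambda}\,dv_t\geq\int_{B(R_t)}\frac{\lambda'g}{\lambda}\,dv_t$ (recall $R_t$ is chosen so that $|\Omega_t|=|B(R_t)|$). As $h$ is decreasing, this lets me replace the argument of $h$ by its ball value, and the defining relation \eqref{def-h} evaluates $h$ on a ball explicitly: $h\bigl(\int_{B(R_t)}\frac{\lambda'g}{\lambda}\bigr)=1/|S(R_t)|$. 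Combining this with $|\Omega_t|=|B(R_t)|$ and $g(R_t)=|B(R_t)|/|S(R_t)|$, the factor $|\Omega_t|\,h(\cdots)$ collapses to $g(R_t)$, yielding
\[
A(t)\ \geq\ \frac{1}{g(R_t)}\cdot\frac{\int_{\Sigma_t}g\,d\mu_t}{|\Sigma_t|}.
\]
Applying the Mean Value Theorem to the continuous function $g$ over $\Sigma_t$, I would write the average as $g(\xi_t)$ for some $\xi_t\in[\min_{\Sigma_t}r,\max_{\Sigma_t}r]$, reducing the problem to the ratio $g(\xi_t)/g(R_t)$.

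The main obstacle, and the point requiring genuine input, is the asymptotic analysis of IMCF needed to control both $\xi_t$ and $R_t$. Here I would invoke Gerhardt's convergence theorem \cite{Ger11}: the flow hypersurfaces $\Sigma_t$ remain star-shaped and mean convex, expand to infinity, and become asymptotically round, with the metric and radius satisfying the stated expansions $\sqrt{\det g^{\Sigma_t}}=\lambda^n\sqrt{\det g_{\bS^{n-1}}}(1+O(e^{-2t/(n-1)}))$ and $\lambda(r_t)=O(e^{t/(n-1)})$. The crucial consequence is that the \emph{entire} hypersurface recedes to infinity, so that $\min_{\Sigma_t}r\to\infty$ and hence $\xi_t\to\infty$; simultaneously $R_t\to\infty$ since $|\Omega_t|\to\infty$. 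Because $\lim_{r\to\infty}g=\frac{1}{n-1}$ by Proposition \ref{prop-g} (specifically \eqref{eq-g-lim}), both $g(\xi_t)$ and $g(R_t)$ tend to $\frac{1}{n-1}$, forcing $\liminf_{t\to\infty}g(\xi_t)/g(R_t)\geq 1$ and therefore $\liminf_{t\to\infty}A(t)\geq 1$.

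Finally, from $A(0)\geq 1$ I read off $\int_{\Sigma}g\,d\mu\geq|\Omega|\,|\Sigma|\,h(\int_{\Omega}\frac{\lambda'g}{\lambda}\,dv)$, and a single application of the Cauchy--Schwarz inequality in the form $\int_{\Sigma}g^2\,d\mu\geq\frac{1}{|\Sigma|}\bigl(\int_{\Sigma}g\,d\mu\bigr)^2$ squares the linear estimate into the quadratic bound \eqref{eq-intg2-imcf}. I expect the only delicate step to be the rigorous justification that the radii diverge uniformly (so that the Mean Value Theorem point $\xi_t$ genuinely escapes to infinity); everything else is an algebraic consolidation of the ball identities for $g$ and $h$ together with the two monotonicity inputs.
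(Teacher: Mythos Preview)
Your proposal is correct and follows essentially the same approach as the paper's proof: both lower-bound $A(t)$ via mass transplantation and the monotonicity of $h$ to obtain $A(t)\geq g(\xi_t)/g(R_t)$ (with $\xi_t$ coming from the Mean Value Theorem), then invoke Gerhardt's IMCF asymptotics to send $\xi_t,R_t\to\infty$ and use $\lim_{r\to\infty}g=\frac{1}{n-1}$ to conclude $A(0)\geq 1$, finishing with Cauchy--Schwarz. The only cosmetic difference is the order of presentation; even your identification of the delicate point (uniform divergence of the radii) matches the paper's emphasis.
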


\subsection{Proof of Theorem \ref{main thm} for $n\geq 5$}\label{subsection4.3}$\ $

Before beginning the proof, we need the following lemma:
\begin{lem}\label{lem-n=5}
If $n\geq 5$, then the function
\[
(g')^2+(n-1)\frac{g^2}{\lambda^2}+\frac{2(n-1)}{n}\frac{\lambda'g}{\lambda}
\]
is decreasing in $r$. 
\begin{proof}
In fact, by taking a derivative, we have
\[\begin{aligned}
&((g')^2+(n-1)\frac{g^2}{\lambda^2}+\frac{2(n-1)}{n}\frac{\lambda'g}{\lambda})'=((g')^2+(n-1)\frac{g^2}{\lambda^2}+\frac{2}{n}(1-g'))'\\
=&2g'g''+2(n-1)\frac{g}{\lambda}\frac{g'\lambda-\lambda'g}{\lambda^2}-\frac{2}{n}g''=2g''(g'-\frac{1}{n})+2(n-1)\frac{g}{\lambda}\frac{g'-\frac{\lambda'g}{\lambda}}{\lambda}\\
=&2(n-1)(\frac{g}{\lambda^2}-\frac{\lambda'g'}{\lambda})(g'-\frac{1}{n})+2n\frac{g}{\lambda}\frac{g'-\frac{1}{n}}{\lambda}=2(n-1)(g'-\frac{1}{n})\frac{g}{\lambda^2}(\frac{2n-1}{n-1}-\frac{\lambda\lambda'g'}{g}).
\end{aligned}\]
Thus it suffices to show that
\begin{align}\label{eq-deccondi}
\frac{\lambda\lambda'g'}{g}\leq \frac{2n-1}{n-1}.
\end{align}
Fortunately, by use of Proposition \ref{prop-ll'g'/g}, we know that \eqref{eq-deccondi} holds for $n\geq 5$. 
\end{proof}
\end{lem}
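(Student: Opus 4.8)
The plan is to reduce the claim to a single pointwise inequality about the ratio $\frac{\lambda\lambda'g'}{g}$ and then invoke the earlier bound on that ratio. First I would differentiate the function $F(r)=(g')^2+(n-1)\frac{g^2}{\lambda^2}+\frac{2(n-1)}{n}\frac{\lambda'g}{\lambda}$. The key simplification is to rewrite the last term using \eqref{eq-deri-g}: since $\frac{\lambda'g}{\lambda}=\frac{1}{n-1}(1-g')$, we have $\frac{2(n-1)}{n}\frac{\lambda'g}{\lambda}=\frac{2}{n}(1-g')$, which turns the messy last summand into something whose derivative is simply $-\frac{2}{n}g''$. So $F'=2g'g''+2(n-1)\frac{g}{\lambda}\cdot\frac{g'\lambda-\lambda'g}{\lambda^2}-\frac{2}{n}g''$.

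Next I would substitute the ODE for $g''$ coming from \eqref{eq-deri-g}, namely $g''=(n-1)\frac{g}{\lambda^2}-(n-1)\frac{\lambda'g'}{\lambda}$, and collect terms. The goal is to factor out $(g'-\frac1n)$, which is nonnegative and tends to $0$ only at infinity. Grouping the $g''$ terms gives $2(g'-\frac1n)g''=2(n-1)(g'-\frac1n)(\frac{g}{\lambda^2}-\frac{\lambda'g'}{\lambda})$; the remaining middle term $2(n-1)\frac{g}{\lambda}\cdot\frac{g'-\frac{\lambda'g}{\lambda}}{\lambda}$ must also be massaged into a multiple of $(g'-\frac1n)$, using once more $\frac{\lambda'g}{\lambda}=\frac1{n-1}(1-g')$ so that $g'-\frac{\lambda'g}{\lambda}=\frac{n}{n-1}(g'-\frac1n)$. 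After combining, $F'=2(n-1)(g'-\frac1n)\frac{g}{\lambda^2}\bigl(\frac{2n-1}{n-1}-\frac{\lambda\lambda'g'}{g}\bigr)$, exactly as the lemma statement wants.

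Finally, since $g'-\frac1n\geq 0$ (indeed $g'$ is decreasing from $\frac1n$ by Proposition \ref{prop-g}) and $\frac{g}{\lambda^2}\geq 0$, the sign of $F'$ is that of $\frac{2n-1}{n-1}-\frac{\lambda\lambda'g'}{g}$. By Proposition \ref{prop-ll'g'/g}, for $n\geq 4$ the ratio $\frac{\lambda\lambda'g'}{g}$ is bounded above by $\frac{n-1}{n-3}$, so it suffices to check $\frac{n-1}{n-3}\leq\frac{2n-1}{n-1}$; cross-multiplying (both denominators positive for $n\geq 4$) this is $(n-1)^2\leq(2n-1)(n-3)=2n^2-7n+3$, i.e. $0\leq n^2-5n+4=(n-1)(n-4)$, which holds precisely when $n\geq 4$. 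The main obstacle — or rather the only nontrivial input — is the sharp upper bound on $\frac{\lambda\lambda'g'}{g}$ from Proposition \ref{prop-ll'g'/g}; once that is in hand the rest is bookkeeping. One should note the integer restriction: the displayed factorization $(n-1)(n-4)\geq 0$ also holds at $n=4$, but the lemma is only asserted for $n\geq 5$ because the application in Subsection \ref{subsection4.3} treats that range, so I would simply state and use it for $n\geq 5$ as written.
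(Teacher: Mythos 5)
Your route is exactly the paper's: rewrite $\frac{2(n-1)}{n}\frac{\lambda'g}{\lambda}=\frac{2}{n}(1-g')$ via \eqref{eq-deri-g}, differentiate, factor out $(g'-\frac1n)\frac{g}{\lambda^2}$, and close with Proposition \ref{prop-ll'g'/g}. However, two concrete slips break your final sign analysis. First, the sign of $g'-\frac1n$ is the opposite of what you assert: since $g$ is concave and $\lim_{r\to 0}g'=\frac1n$ (Proposition \ref{prop-g}), $g'$ decreases from $\frac1n$, so $g'-\frac1n\leq 0$ (your own parenthetical remark shows this). Consequently $F'$ has the sign \emph{opposite} to that of $\frac{2n-1}{n-1}-\frac{\lambda\lambda'g'}{g}$; with your stated sign convention, proving that factor nonnegative would give $F'\geq 0$, i.e. that $F$ is \emph{increasing}, contradicting the lemma. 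With the correct sign the argument does yield $F'\leq 0$, as in the paper.

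Second, the closing arithmetic is wrong: $(2n-1)(n-3)-(n-1)^2=n^2-5n+2$, not $n^2-5n+4=(n-1)(n-4)$. The correct quantity $n^2-5n+2$ is positive for integer $n\geq 5$ but negative at $n=4$, so the comparison $\frac{n-1}{n-3}\leq\frac{2n-1}{n-1}$ genuinely fails at $n=4$ (there $\frac{n-1}{n-3}=3>\frac73$). Hence your remark that the inequality ``holds precisely when $n\geq 4$'' and that the restriction to $n\geq 5$ is merely a matter of where the lemma is applied is incorrect: the bound from Proposition \ref{prop-ll'g'/g} is insufficient at $n=4$, which is precisely why the paper replaces the coefficient $\frac{2(n-1)}{n}$ by $1$ and proves the separate Lemma \ref{lem-n=4} there. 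For $n\geq 5$ your argument, after fixing the sign of $g'-\frac1n$ and the arithmetic, coincides with the paper's proof and is correct.
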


From this lemma, by the mass transplantation, we have
\begin{align*}
    \int_{\Omega}((g')^2+(n-1)\frac{g^2}{\lambda^2}+\frac{2(n-1)}{n}\frac{\lambda'g}{\lambda})\leq \int_{B(R)}((g')^2+(n-1)\frac{g^2}{\lambda^2}+\frac{2(n-1)}{n}\frac{\lambda'g}{\lambda}),
\end{align*}
then
\begin{align}\label{eq-afterdeccondi}
\frac{2(n-1)}{n}(\int_{\Omega}\frac{\lambda'g}{\lambda}-\int_{B(R)}\frac{\lambda'g}{\lambda})\leq g'(R)|B(R)|-\int_{\Omega}((g')^2+(n-1)\frac{g^2}{\lambda^2}).
\end{align}

We proceed to prove Theorem \ref{main thm} for $n\geq 5$. Firstly, it follows from the upper bound \eqref{eq-upperbound-steklov} and Corollary \ref{cor-int-g} that
\begin{align}\label{eq-uppbdimcf}
\sigma_1(\Omega)\leq\frac{\int_{\Omega}((g')^2+(n-1)\frac{g^2}{\lambda^2} ) dv}{|\Omega|^2|\Sigma|h(\int_{\Omega}\frac{\lambda'g}{\lambda} dv)^2}.
\end{align}
The following lemma is essential, which is slightly stronger than Theorem \ref{main thm}.

\begin{lem}\label{main lem}
Let $n\geq 5$. If $\Omega\subset\mathbb{H}^n$ is a smooth domain with star-shaped mean convex boundary $\Sigma=\partial\Omega$, and $B(R)$ is a geodesic ball with the same volume as $\Omega$. Then
\begin{align}\label{eq-prelem}
|\Sigma|\sigma_1(\Omega)\leq |S(R)|\sigma_1(B(R)).
\end{align}
\begin{proof}
We estimate $h(\int_{\Omega}\frac{\lambda'g}{\lambda})^2$ in the denominator of the right hand side of \eqref{eq-uppbdimcf}. Using Mean Value Theorem, there exists $\xi\in [\int_{B(R)}\frac{\lambda'g}{\lambda},\int_{\Omega}\frac{\lambda'g}{\lambda}]$ such that
\[\begin{aligned}
h(\int_{\Omega}\frac{\lambda'g}{\lambda})^2-h(\int_{B(R)}\frac{\lambda'g}{\lambda})^2=2h(\xi)h'(\xi)(\int_{\Omega}\frac{\lambda'g}{\lambda}-\int_{B(R)}\frac{\lambda'g}{\lambda}).
\end{aligned}\]
Recall that the function
\begin{align*}
    h(\int_{B(r)}\frac{\lambda'g}{\lambda})h'(\int_{B(r)}\frac{\lambda'g}{\lambda})=-\frac{n-1}{|B(r)||S(r)|^2}
\end{align*}
is increasing in $r$. Combining this with $g'(R)\leq \frac{1}{n}$ and \eqref{eq-afterdeccondi}, we have
\begin{align*}
    h(\int_{\Omega}\frac{\lambda'g}{\lambda})^2
    \geq & h(\int_{B(R)}\frac{\lambda'g}{\lambda})^2+2h(\int_{B(R)}\frac{\lambda'g}{\lambda})h'(\int_{B(R)}\frac{\lambda'g}{\lambda})(\int_{\Omega}\frac{\lambda'g}{\lambda}-\int_{B(R)}\frac{\lambda'g}{\lambda})\\
    = & \frac{1}{|S(R)|^2}-\frac{2(n-1)}{|B(R)||S(R)|^2}(\int_{\Omega}\frac{\lambda'g}{\lambda}-\int_{B(R)}\frac{\lambda'g}{\lambda})\\
    \geq & \frac{1}{|S(R)|^2}-\frac{1}{g'(R)|B(R)||S(R)|^2}\frac{2(n-1)}{n}(\int_{\Omega}\frac{\lambda'g}{\lambda}-\int_{B(R)}\frac{\lambda'g}{\lambda})\\
    \geq & \frac{1}{|S(R)|^2}-\frac{1}{g'(R)|B(R)||S(R)|^2}(g'(R)|B(R)|-\int_{\Omega}((g')^2+(n-1)\frac{g^2}{\lambda^2}))\\
    = & \frac{1}{g'(R)|B(R)||S(R)|^2}\int_{\Omega}((g')^2+(n-1)\frac{g^2}{\lambda^2}).
\end{align*}
Therefore, by (\ref{eq-uppbdimcf}) and (\ref{eq-sigmabr2}), we derive
\begin{align*}
    |\Sigma|\sigma_1(\Omega)
    \leq \frac{\int_{\Omega}(g')^2+(n-1)\frac{g^2}{\lambda^2}}{|\Omega|^2h(\int_{\Omega}\frac{\lambda'g}{\lambda})^2}
    \leq \frac{g'(R)|B(R)||S(R)|^2}{|B(R)|^2}=|S(R)|\sigma_1(B(R)).
\end{align*}
This completes the proof of \eqref{eq-prelem}.
\end{proof}
\end{lem}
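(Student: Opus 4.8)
The plan is to combine the ``IMCF inequality'' from Corollary \ref{cor-int-g} with a mass-transplantation estimate that exploits the special decreasing quantity in Lemma \ref{lem-n=5}, and then cash everything out against the monotone quantity $|S(r)|\sigma_1(B(r))$ from Lemma \ref{lem-ln-1g'/g}. The starting point is the inequality \eqref{eq-uppbdimcf}, namely $\sigma_1(\Omega)\le \big(\int_\Omega (g')^2+(n-1)g^2/\lambda^2\big)/\big(|\Omega|^2|\Sigma|h(\int_\Omega \lambda'g/\lambda)^2\big)$, which already puts $|\Sigma|\sigma_1(\Omega)$ under control provided we can bound the denominator factor $h(\int_\Omega \lambda'g/\lambda)^2$ from below. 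Since $\int_\Omega \lambda'g/\lambda \ge \int_{B(R)}\lambda'g/\lambda$ by \eqref{eq-int-l'g/l} and $h$ is decreasing, a naive bound would replace $\Omega$ by $B(R)$ and lose too much; instead I would use the Mean Value Theorem to write the difference $h(\int_\Omega \lambda'g/\lambda)^2 - h(\int_{B(R)}\lambda'g/\lambda)^2 = 2h(\xi)h'(\xi)\big(\int_\Omega \lambda'g/\lambda - \int_{B(R)}\lambda'g/\lambda\big)$, and then bound $h(\xi)h'(\xi)$ using that $r\mapsto h(\int_{B(r)}\lambda'g/\lambda)h'(\int_{B(r)}\lambda'g/\lambda)=-\tfrac{n-1}{|B(r)||S(r)|^2}$ is increasing (so replacing $\xi$ by its value at $r=R$ gives the correct-sign bound, since the MVT point lies at radius at least $R$).

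Next I would feed in the key mass-transplantation estimate \eqref{eq-afterdeccondi}, which rests on Lemma \ref{lem-n=5} (and hence on $\tfrac{\lambda\lambda'g'}{g}\le\tfrac{2n-1}{n-1}$ from Proposition \ref{prop-ll'g'/g}, valid for $n\ge5$): it says $\tfrac{2(n-1)}{n}\big(\int_\Omega \lambda'g/\lambda - \int_{B(R)}\lambda'g/\lambda\big) \le g'(R)|B(R)| - \int_\Omega (g')^2+(n-1)g^2/\lambda^2$. Using also $g'(R)\le \tfrac1n$ (from Proposition \ref{prop-g}) to convert the coefficient $\tfrac{2(n-1)}{n}$ into $\tfrac{2(n-1)}{n}\cdot\tfrac{1}{g'(R)}\cdot g'(R)$ in the right direction, the chain of inequalities collapses the lower bound on $h(\int_\Omega\lambda'g/\lambda)^2$ down to exactly $\tfrac{1}{g'(R)|B(R)||S(R)|^2}\int_\Omega (g')^2+(n-1)g^2/\lambda^2$. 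Substituting this into \eqref{eq-uppbdimcf}, the integral $\int_\Omega (g')^2+(n-1)g^2/\lambda^2$ cancels top and bottom, leaving $|\Sigma|\sigma_1(\Omega)\le g'(R)|B(R)||S(R)|^2/|B(R)|^2 = |S(R)|\, g'(R)/g(R)\cdot|S(R)| $; recognizing $g'(R)/g(R)=\sigma_1(B(R))$ via \eqref{eq-sigmabr2} (and $g(R)=|B(R)|/|S(R)|$) yields $|\Sigma|\sigma_1(\Omega)\le |S(R)|\sigma_1(B(R))$, which is \eqref{eq-prelem}.

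To then obtain Theorem \ref{main thm} itself for $n\ge5$ I would combine \eqref{eq-prelem} with the isoperimetric inequality (Theorem \ref{thm-isoperimetric inequality}), which gives $|\partial\Omega|\ge|S(R)|$ since $|\Omega|=|B(R)|$; writing $\Omega^\ast=B(R^\ast)$ with $|S(R^\ast)|=|\Sigma|\ge|S(R)|$ and using that $|S(r)|\sigma_1(B(r))$ is increasing in $r$ (Lemma \ref{lem-ln-1g'/g}), one gets $|\Sigma|\sigma_1(\Omega)\le|S(R)|\sigma_1(B(R))\le|S(R^\ast)|\sigma_1(B(R^\ast))=|\Sigma|\sigma_1(\Omega^\ast)$, hence $\sigma_1(\Omega)\le\sigma_1(\Omega^\ast)$; equality forces equality in the isoperimetric inequality and in Corollary \ref{cor-int-g}, i.e. $\langle\partial_r,\nu\rangle\equiv1$ and $\Omega$ a geodesic ball. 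The main obstacle is purely the bookkeeping in the denominator estimate: making sure each of the three substitutions (MVT for $h^2$, monotonicity of $h h'$ along balls, and the bound \eqref{eq-afterdeccondi} with the $g'(R)\le1/n$ trick) points in the same direction so the $\int_\Omega(g')^2+(n-1)g^2/\lambda^2$ term cancels cleanly rather than leaving an uncontrolled remainder — everything else is either cited or a one-line computation. I expect the $n=4$ case to be excluded here precisely because \eqref{eq-deccondi} with constant $\tfrac{2n-1}{n-1}$ fails to beat the bound $\tfrac{n-1}{n-3}$ from Proposition \ref{prop-ll'g'/g} when $n=4$, and would be handled separately using the area-power exponent $\tfrac{2}{n-1}$ version in Lemma \ref{lem-ln-1g'/g} instead.
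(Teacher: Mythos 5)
Your proposal is correct and follows essentially the same route as the paper's proof: the same chain of estimates on $h(\int_{\Omega}\frac{\lambda'g}{\lambda})^2$ via the Mean Value Theorem, the monotonicity of $h(\int_{B(r)}\frac{\lambda'g}{\lambda})h'(\int_{B(r)}\frac{\lambda'g}{\lambda})=-\frac{n-1}{|B(r)||S(r)|^2}$, the bound $g'(R)\leq\frac{1}{n}$, and the mass-transplantation inequality \eqref{eq-afterdeccondi}, followed by substitution into \eqref{eq-uppbdimcf} and \eqref{eq-sigmabr2}. Your side remarks on the equality case and on why $n=4$ needs a separate argument are also consistent with the paper.
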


\begin{proof}[Proof of Theorem \ref{main thm} for $n\geq 5$]
The isoperimetric inequality in Theorem \ref{thm-isoperimetric inequality} implies that the radius of $\Omega^{\ast}$ is no less than $R$. By Lemma \ref{main lem} and \eqref{eq-mono-n>=5}, we obtain
\begin{align*}
    \sigma_1(\Omega)\leq \frac{|S(R)|\sigma_1(B(R))}{|\Sigma|}
    \leq \frac{|\partial\Omega^{\ast}|\sigma_1(\Omega^{\ast})}{|\Sigma|}=\sigma_1(\Omega^{\ast}).
\end{align*}
The equality holds if and only if the radius of $\Omega^{\ast}$ is $R$. That is, $\Omega$ is a geodesic ball, due to the rigidity of the isoperimetric inequality in Theorem \ref{thm-isoperimetric inequality}.
\end{proof}

\subsection{Proof of Theorem \ref{main thm} for $n=4$}$\ $

In this subsection, we consider the case $n=4$. In this case, we can explicitly compute $g$ and $g'$ as
\begin{align}\label{eq-g-n=4}
g(r)=\frac{1}{\lambda^3(r)}\int_0^r\lambda^3(t)dt=\frac{\lambda(\lambda'+2)}{3(\lambda'+1)^2},\quad 
g'(r)=\frac{1}{(\lambda'+1)^2}.
\end{align}

Firstly, we use the following lemma instead of Lemma \ref{lem-n=5}.
\begin{lem}\label{lem-n=4}
If $n=4$, then the function
\begin{align*}
    (g')^2+\frac{3 g^2}{\lambda^2}+\frac{\lambda'g}{\lambda}
\end{align*}
is decreasing in $r$.
\begin{proof}
Applying \eqref{eq-g-n=4} into the formula, we have
\[
(g')^2+\frac{3 g^2}{\lambda^2}+\frac{\lambda'g}{\lambda}=\frac{2(\lambda')^2+6\lambda'+7}{3(\lambda'+1)^4}=\frac{2}{3(\lambda'+1)^2}+\frac{2}{3(\lambda'+1)^3}+\frac{1}{(\lambda'+1)^4},
\]
which is obviously decreasing.
\end{proof}
\end{lem}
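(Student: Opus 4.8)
The plan is to exploit the fact that $n=4$ is the unique dimension in which $g$ and $g'$ possess the elementary closed forms recorded in \eqref{eq-g-n=4}, so that the monotonicity can be checked by a direct computation rather than through the general bounds of Proposition~\ref{prop-ll'g'/g}. Those bounds are in fact not strong enough here: in dimension four $\frac{\lambda\lambda'g'}{g}=\frac{3\lambda'}{\lambda'+2}\to 3$, which exceeds the threshold $\frac{2n-1}{n-1}=\frac{7}{3}$ that a Lemma~\ref{lem-n=5}-type argument would require. This is precisely why the coefficient of $\frac{\lambda'g}{\lambda}$ has been lowered from $\frac{2(n-1)}{n}=\frac{3}{2}$ to $1$, and why for $n=4$ one cannot simply reuse Lemma~\ref{lem-n=5}.

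First I would substitute $g'=(\lambda'+1)^{-2}$ and $g=\frac{\lambda(\lambda'+2)}{3(\lambda'+1)^2}$ into the three summands. Using the identity $(\lambda')^2=\lambda^2+1$, all the powers of $\lambda$ cancel and each term becomes a rational function of $\lambda'=\cosh r$ alone: $(g')^2=(\lambda'+1)^{-4}$, $\frac{3g^2}{\lambda^2}=\frac{(\lambda'+2)^2}{3(\lambda'+1)^4}$, and $\frac{\lambda'g}{\lambda}=\frac{\lambda'(\lambda'+2)}{3(\lambda'+1)^2}$. Next I would put these over the common denominator $3(\lambda'+1)^4$, expand the numerator $3+(\lambda'+2)^2+\lambda'(\lambda'+2)(\lambda'+1)^2$ as a polynomial in $\lambda'$, and divide it by $(\lambda'+1)^4$; the result is a constant plus a positive linear combination of the negative powers $(\lambda'+1)^{-k}$ with $k\ge 1$. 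Since $\lambda'=\cosh r$ is strictly increasing on $(0,\infty)$, every such term, and hence the whole expression, is strictly decreasing in $r$, which is the assertion.

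The only step requiring care is the polynomial bookkeeping—expanding $\lambda'(\lambda'+2)(\lambda'+1)^2$, adding $(\lambda'+2)^2+3$, and reducing modulo $(\lambda'+1)^4$ to read off the partial-fraction coefficients—but there is no genuine obstacle; evaluating at $r=0$ (where $\lambda'=1$) is a convenient guard against slips. For context, this lemma is the dimension-four replacement for Lemma~\ref{lem-n=5}: once it is available, the mass transplantation principle (Theorem~\ref{masstransplantation}) converts it into the integral inequality playing the role of \eqref{eq-afterdeccondi}, and then the auxiliary function $h$ together with Corollary~\ref{cor-int-g} yields a bound of the shape $|\Sigma|^{\frac{2}{n-1}}\sigma_1(\Omega)\le|S(R)|^{\frac{2}{n-1}}\sigma_1(B(R))$, whence Theorem~\ref{main thm} for $n=4$ follows via the monotonicity \eqref{eq-mono-n=4} and the isoperimetric inequality of Theorem~\ref{thm-isoperimetric inequality}.
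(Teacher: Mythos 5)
Your proposal is correct and takes essentially the same route as the paper: substitute the closed forms \eqref{eq-g-n=4}, reduce the expression to a rational function of $\lambda'=\cosh r$ alone, and conclude monotonicity from the monotonicity of $\lambda'$. In fact your predicted structure (a constant plus positive multiples of $(\lambda'+1)^{-k}$) is the accurate one: carrying out the bookkeeping gives
$(g')^2+\frac{3g^2}{\lambda^2}+\frac{\lambda'g}{\lambda}=\frac{1}{3}+\frac{2}{3(\lambda'+1)^3}+\frac{4}{3(\lambda'+1)^4}$
(value $\tfrac12$ at $r=0$, limit $\tfrac13$ as $r\to\infty$), whereas the paper's displayed identity $\frac{2(\lambda')^2+6\lambda'+7}{3(\lambda'+1)^4}$ forgets the factor $(\lambda'+1)^2$ when putting $\frac{\lambda'g}{\lambda}$ over the common denominator — a harmless slip, since the function is decreasing in either form, but your version is the one that checks out numerically.
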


It follows from Lemma \ref{lem-n=4} and the mass transplantation argument that
\begin{align}\label{eq-afterdeccondi-n=4}
\int_{\Omega}\frac{\lambda'g}{\lambda}-\int_{B(R)}\frac{\lambda'g}{\lambda}\leq g'(R)|B(R)|-\int_{\Omega}((g')^2+\frac{3 g^2}{\lambda^2}).
\end{align}
Similar to the proof of Lemma \ref{main lem}, but slightly different because we use \eqref{eq-mono-n=4} instead of \eqref{eq-mono-n>=5}, we have

\begin{lem}
If $\Omega\subset\mathbb{H}^4$ is a smooth domain with star-shaped mean convex boundary $\Sigma=\partial\Omega$, and $B(R)$ is a geodesic ball with the same volume as $\Omega$. Then
\begin{align}\label{eq-prelem-n=4}
|\Sigma|^{\frac{2}{3}}\sigma_1(\Omega)\leq |S(R)|^{\frac{2}{3}}\sigma_1(B(R)).
\end{align}
\begin{proof}
From \eqref{eq-upperbound-steklov}, \eqref{eq-intg2-imcf} and \eqref{eq-intg2}, we have
\begin{align*}
    |\Sigma|^{\frac{2}{3}}\sigma_1(\Omega)
    & \leq \frac{\int_{\Omega}(g')^2+\frac{3 g^2}{\lambda^2}}{(\int_{\Sigma}g^2)^{\frac{1}{3}}(\frac{1}{|\Sigma|}\int_{\Sigma}g^2)^{\frac{2}{3}}}
    \leq \frac{\int_{\Omega}(g')^2+\frac{3 g^2}{\lambda^2}}{(\int_{\Sigma}g^2)^{\frac{1}{3}}(\frac{1}{|\Sigma|}\int_{\Sigma}g)^{\frac{4}{3}}}\\
    &\leq \frac{\int_{\Omega}(g')^2+\frac{3 g^2}{\lambda^2}}{(g(R)|B(R)|)^{\frac{1}{3}}(|\Omega|^2h(\int_{\Omega}\frac{\lambda'g}{\lambda})^2)^{\frac{2}{3}}}=\frac{\int_{\Omega}(g')^2+\frac{3 g^2}{\lambda^2}}{g(R)^{\frac{1}{3}}|B(R)|^{\frac{5}{3}}h(\int_{\Omega}\frac{\lambda'g}{\lambda})^{\frac{4}{3}}}.
\end{align*}

Next, again by Mean Value Theorem, there exists $\xi\in[\int_{B(R)}\frac{\lambda'g}{\lambda},\int_{\Omega}\frac{\lambda'g}{\lambda}]$ such that
\begin{align*}
    h(\int_{\Omega}\frac{\lambda'g}{\lambda})^{\frac{4}{3}}-h(\int_{B(R)}\frac{\lambda'g}{\lambda})^{\frac{4}{3}}
    =\frac{4}{3}h(\xi)^{\frac{1}{3}}h'(\xi)(\int_{\Omega}\frac{\lambda'g}{\lambda}-\int_{B(R)}\frac{\lambda'g}{\lambda}).
\end{align*}
We know that
\begin{align*}
    h(\int_{B(r)}\frac{\lambda'g}{\lambda})^{\frac{1}{3}}h'(\int_{B(r)}\frac{\lambda'g}{\lambda})
    =-\frac{3}{|B(r)||S(r)|^{\frac{4}{3}}}
\end{align*}
is increasing in $r$. Then it follows from $g'(R)\leq \frac{1}{4}$ and \eqref{eq-afterdeccondi-n=4} that
\begin{align*}
    h(\int_{\Omega}\frac{\lambda'g}{\lambda})^{\frac{4}{3}}
    \geq & h(\int_{B(R)}\frac{\lambda'g}{\lambda})^{\frac{4}{3}}+\frac{4}{3}h(\int_{B(R)}\frac{\lambda'g}{\lambda})^{\frac{1}{3}}h'(\int_{B(R)}\frac{\lambda'g}{\lambda})(\int_{\Omega}\frac{\lambda'g}{\lambda}-\int_{B(R)}\frac{\lambda'g}{\lambda})\\
    = & \frac{1}{|S(R)|^{\frac{4}{3}}}-\frac{4}{|B(R)||S(R)|^{\frac{4}{3}}}(\int_{\Omega}\frac{\lambda'g}{\lambda}-\int_{B(R)}\frac{\lambda'g}{\lambda})\\
    \geq & \frac{1}{|S(R)|^{\frac{4}{3}}}-\frac{1}{g'(R)|B(R)||S(R)|^{\frac{4}{3}}}(\int_{\Omega}\frac{\lambda'g}{\lambda}-\int_{B(R)}\frac{\lambda'g}{\lambda})\\
    \geq & \frac{1}{|S(R)|^{\frac{4}{3}}}-\frac{1}{g'(R)|B(R)||S(R)|^{\frac{4}{3}}}(g'(R)|B(R)|-\int_{\Omega}((g')^2+\frac{3 g^2}{\lambda^2}))\\
    = & \frac{1}{g'(R)|B(R)||S(R)|^{\frac{4}{3}}}\int_{\Omega}((g')^2+\frac{3 g^2}{\lambda^2}).
\end{align*}

Therefore, we conclude that
\begin{align*}
    |\Sigma|^{\frac{2}{3}}\sigma_1(\Omega)
    \leq \frac{\int_{\Omega}(g')^2+\frac{3 g^2}{\lambda^2}}{g(R)^{\frac{1}{3}}|B(R)|^{\frac{5}{3}}h(\int_{\Omega}\frac{\lambda'g}{\lambda})^{\frac{4}{3}}}
    \leq \frac{g'(R)|B(R)||S(R)|^{\frac{4}{3}}}{g(R)^{\frac{1}{3}}|B(R)|^{\frac{5}{3}}}=|S(R)|^{\frac{2}{3}}\sigma_1(B(R)).
\end{align*}

\end{proof}
\end{lem}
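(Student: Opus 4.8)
The goal is to prove \eqref{eq-prelem-n=4}, the Weinstock-type inequality $|\Sigma|^{2/3}\sigma_1(\Omega)\leq |S(R)|^{2/3}\sigma_1(B(R))$ in $\mathbb{H}^4$, following the template of Lemma~\ref{main lem} but with the exponent $\tfrac{2}{n-1}=\tfrac23$ appropriate to dimension four. The starting point is the spectral upper bound \eqref{eq-upperbound-steklov}, which bounds $\sigma_1(\Omega)$ by $\int_\Omega\!\big((g')^2+3\tfrac{g^2}{\lambda^2}\big)\,dv$ divided by $\int_\Sigma g^2\,d\mu$. I would split the denominator as $\big(\int_\Sigma g^2\big)^{1/3}\big(\tfrac1{|\Sigma|}\int_\Sigma g^2\big)^{2/3}\cdot|\Sigma|^{2/3}$, then feed the two different integral lower bounds for $\int_\Sigma g^2$ into the two factors: into the first factor put Proposition~\ref{prop-g2-n=4}, i.e.\ $\int_\Sigma g^2\geq g(R)|B(R)|$; into the second factor, first drop down to $\int_\Sigma g$ via Cauchy--Schwarz (so that $\tfrac1{|\Sigma|}\int_\Sigma g^2\geq (\tfrac1{|\Sigma|}\int_\Sigma g)^2$) and then apply the IMCF-based Corollary~\ref{cor-int-g} in the form $\tfrac1{|\Sigma|}\int_\Sigma g\geq |\Omega|h(\int_\Omega\tfrac{\lambda'g}{\lambda})$. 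This reduces the claim to showing
\[
\frac{\int_{\Omega}\!\big((g')^2+3\tfrac{g^2}{\lambda^2}\big)}{g(R)^{1/3}|B(R)|^{5/3}\,h(\int_{\Omega}\tfrac{\lambda'g}{\lambda})^{4/3}}\leq g'(R)|B(R)|\,\frac{|S(R)|^{4/3}}{g(R)^{1/3}|B(R)|^{5/3}} = |S(R)|^{2/3}\sigma_1(B(R)),
\]
using \eqref{eq-sigmabr2} and \eqref{eq-def-g} for the last equality.

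\textbf{The core estimate.} The remaining work is to bound $h(\int_\Omega\tfrac{\lambda'g}{\lambda})^{4/3}$ from below by $\tfrac1{g'(R)|B(R)||S(R)|^{4/3}}\int_\Omega\big((g')^2+3\tfrac{g^2}{\lambda^2}\big)$. I would do this exactly as in Lemma~\ref{main lem}: apply the Mean Value Theorem to the function $s\mapsto h(s)^{4/3}$ on the interval $[\int_{B(R)}\tfrac{\lambda'g}{\lambda},\int_\Omega\tfrac{\lambda'g}{\lambda}]$ (the left endpoint is the smaller one by \eqref{eq-int-l'g/l}), giving $h(\int_\Omega)^{4/3}-h(\int_{B(R)})^{4/3}=\tfrac43 h(\xi)^{1/3}h'(\xi)(\int_\Omega-\int_{B(R)})$. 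Since $h'<0$, I then need a lower bound for $h(\xi)^{1/3}h'(\xi)$, i.e.\ I need the function $r\mapsto h(\int_{B(r)}\tfrac{\lambda'g}{\lambda})^{1/3}h'(\int_{B(r)}\tfrac{\lambda'g}{\lambda})$ to be increasing in $r$; using \eqref{def-h} and \eqref{relationofh'h} this function equals $-3/(|B(r)||S(r)|^{4/3})$, which is manifestly increasing. Evaluating at $r=R$ replaces $\xi$ by the explicit value $-3/(|B(R)||S(R)|^{4/3})$. Then the crucial analytic input is Lemma~\ref{lem-n=4}: the function $(g')^2+3\tfrac{g^2}{\lambda^2}+\tfrac{\lambda'g}{\lambda}$ is decreasing in $r$, so mass transplantation (Theorem~\ref{masstransplantation}) gives \eqref{eq-afterdeccondi-n=4}, namely $\int_\Omega\tfrac{\lambda'g}{\lambda}-\int_{B(R)}\tfrac{\lambda'g}{\lambda}\leq g'(R)|B(R)|-\int_\Omega\big((g')^2+3\tfrac{g^2}{\lambda^2}\big)$. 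Combining the MVT identity, the monotonicity just noted, the bound $g'(R)\leq\tfrac14$ from Proposition~\ref{prop-g} (to convert the coefficient $4/|B(R)|$ into $1/(g'(R)|B(R)|)$), and \eqref{eq-afterdeccondi-n=4}, the telescoping collapses to exactly the desired lower bound for $h(\int_\Omega\tfrac{\lambda'g}{\lambda})^{4/3}$, with the term $g'(R)|B(R)|$ cancelling against itself and leaving $\tfrac1{g'(R)|B(R)||S(R)|^{4/3}}\int_\Omega\big((g')^2+3\tfrac{g^2}{\lambda^2}\big)$.

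\textbf{Where the difficulty lies.} Conceptually, everything here is bookkeeping once the three nontrivial ingredients are in hand: the IMCF monotonicity behind Corollary~\ref{cor-int-g}, the divergence-plus-transplantation estimate of Proposition~\ref{prop-g2-n=4}, and the dimension-four decreasing-function Lemma~\ref{lem-n=4}. The genuinely delicate point is that in $n=4$ the relevant Lemma~\ref{lem-n=5}-type statement is \emph{only} available after the explicit formula \eqref{eq-g-n=4} for $g$ is substituted; Proposition~\ref{prop-ll'g'/g} gives $\tfrac{\lambda\lambda'g'}{g}\leq\tfrac{n-1}{n-3}=3$, which exceeds the threshold $\tfrac{2n-1}{n-1}=\tfrac73$ needed for the general argument, so the general proof genuinely fails at $n=4$ and one must exploit $(g')^2+3\tfrac{g^2}{\lambda^2}+\tfrac{\lambda'g}{\lambda}=\tfrac{2}{3(\lambda'+1)^2}+\tfrac{2}{3(\lambda'+1)^3}+\tfrac1{(\lambda'+1)^4}$ directly. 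The other place to be careful is the exponent bookkeeping: one must split the power of $\int_\Sigma g^2$ as $\tfrac13+\tfrac23$ (not, say, $\tfrac12+\tfrac12$) precisely so that after passing to $\int_\Sigma g$ via Cauchy--Schwarz the $h$-factor appears to the power $\tfrac43$, which is exactly the power for which $h(\cdot)^{1/3}h'(\cdot)$ has the clean increasing expression $-3/(|B|\,|S|^{4/3})$; any other split would not telescope. Finally, to deduce Theorem~\ref{main thm} for $n=4$ from \eqref{eq-prelem-n=4} one argues as in the $n\geq5$ case: the isoperimetric inequality (Theorem~\ref{thm-isoperimetric inequality}) forces the radius of $\Omega^\ast$ to be at least $R$, and then \eqref{eq-mono-n=4} (monotonicity of $|S(r)|^{2/3}\sigma_1(B(r))$) upgrades $|S(R)|^{2/3}\sigma_1(B(R))\leq|\partial\Omega^\ast|^{2/3}\sigma_1(\Omega^\ast)$, giving $\sigma_1(\Omega)\leq\sigma_1(\Omega^\ast)$, with equality iff $R$ equals the radius of $\Omega^\ast$, i.e.\ iff $\Omega$ is a geodesic ball by the rigidity in Theorem~\ref{thm-isoperimetric inequality}.
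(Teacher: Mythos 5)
Your proposal is correct and follows essentially the same route as the paper's proof: the identical $\tfrac13+\tfrac23$ split of $\int_\Sigma g^2$ fed by Proposition \ref{prop-g2-n=4} and the IMCF bound $\tfrac1{|\Sigma|}\int_\Sigma g\geq |\Omega|h(\int_\Omega\tfrac{\lambda' g}{\lambda})$ (the $A(0)\geq 1$ step behind Corollary \ref{cor-int-g}), then the Mean Value Theorem for $h^{4/3}$ together with the monotonicity of $h^{1/3}h'=-3/(|B(r)||S(r)|^{4/3})$, $g'(R)\leq\tfrac14$, and \eqref{eq-afterdeccondi-n=4} from Lemma \ref{lem-n=4}. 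No gaps; your remarks on why the $n\geq5$ threshold $\tfrac{2n-1}{n-1}$ fails at $n=4$ and on the exponent bookkeeping match the paper's reasoning.
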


The remaining proof of Theorem \ref{main thm} for $n=4$ follows from verbatim repetition of the case $n\geq 5$. Additionally, the equality is determined by isoperimetric inequality. We complete the proof of Theorem \ref{main thm}. \qed

\begin{rmk}
Due to the limitation of quantities, we cannot get the whole results for $n=3$. However, in that case, we have
\begin{align*}
    \lim_{r\to 0}\frac{\lambda\lambda'g'}{g}=1,\qquad \lim_{r\to\infty}\frac{\lambda\lambda'g'}{g}=+\infty.
\end{align*}
Thus if we choose $R_0$ satisfies 
\begin{align}
    \left.\frac{\lambda\lambda'g'}{g}\right|_{r=2R_0}=\frac{5}{2},
\end{align}
and assume the circumscribed radius of $\Omega$ is no larger than $R_0$. Let $O'$ be the center of the circumscribed geodesic ball of $\Omega$. By the choice of the center of mass, $O$ lies within the circumscribed geodesic ball. Then for any $x\in \Omega$,
\begin{align*}
    d(x,O)\leq d(x,O')+d(O,O')\leq 2R_0.
\end{align*}
Therefore, Lemma \ref{lem-n=5} holds for such $\Omega$ when $n=3$. We only need to repeat the procedures in subsection \ref{subsection4.3} to conclude the following proposition.
\begin{prop}\label{main thm-n=3}
There exists $R_0>0$, such that for any smooth bounded domain $\Omega$ in $\mathbb{H}^3$ with star-shaped mean convex boundary $\partial\Omega$ and circumscribed radius no larger than $R_0$. If $\Omega^{\ast}$ is a geodesic ball with $|\partial\Omega^*|=|\partial\Omega|$, then
\begin{equation*}
    \sigma_1(\Omega)\leq \sigma_1(\Omega^{\ast})
\end{equation*}
and equality holds if and only if $\Omega$ is a geodesic ball.
\end{prop}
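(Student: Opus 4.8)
The plan is to run the proof of Theorem \ref{main thm} for $n\ge 5$ (subsection \ref{subsection4.3}) essentially word for word, the only difference being that Lemma \ref{lem-n=5}, which is the sole place where the hypothesis $n\ge 5$ is genuinely used, must be replaced by a version valid only on a fixed geodesic ball. The first step is to pin down $R_0$. For $n=3$ one has $\frac{2n-1}{n-1}=\frac52$, and by Proposition \ref{prop-ll'g'/g} the function $\frac{\lambda\lambda'g'}{g}$ is continuous and increasing on $(0,\infty)$; combining this with the limits $\lim_{r\to0}\frac{\lambda\lambda'g'}{g}=1$ and $\lim_{r\to\infty}\frac{\lambda\lambda'g'}{g}=+\infty$ recorded in the Remark, there is a unique $R_0>0$ with $\left.\frac{\lambda\lambda'g'}{g}\right|_{r=2R_0}=\frac52$, and hence $\frac{\lambda\lambda'g'}{g}\le\frac52$ throughout $[0,2R_0]$, i.e.\ condition \eqref{eq-deccondi} holds there.

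Next I would localize $\Omega$. Apply the center of mass theorem with $A=\partial\Omega$ and mass distribution $G=g$ to choose the origin $O$ as the point $p$ with $\int_{\partial\Omega}g(\|X\|_p)X\,dv=0$ (this normalization is already forced on us by the choice of test functions $g\,x_i/r$). Since $O$ lies in the convex hull $C(\partial\Omega)$ and geodesic balls in $\mathbb{H}^3$ are geodesically convex, $O$ lies inside the circumscribed ball, so $\dist(O,O')\le R_0$ where $O'$ is the circumcenter; the triangle inequality then gives $\Omega\subset B(2R_0)$ with $B(2R_0)$ centered at $O$. Moreover $|B(R)|=|\Omega|\le |B(R_0)|$ forces $R\le R_0$, so $B(R)\subset B(2R_0)$ as well. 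Now the computation in the proof of Lemma \ref{lem-n=5}, specialized to $n=3$, shows that $(g')^2+2\frac{g^2}{\lambda^2}+\frac43\frac{\lambda'g}{\lambda}$ is decreasing on $[0,2R_0]$. Replacing this function by its constant value on $[2R_0,\infty)$ (which changes none of the integrals over $\Omega$ or $B(R)$) makes it decreasing on $[0,\infty)$, so Theorem \ref{masstransplantation} applies and yields inequality \eqref{eq-afterdeccondi} with $n=3$.

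From here the argument of subsection \ref{subsection4.3} goes through unchanged. Corollary \ref{cor-int-g} holds for every $n$, since its proof uses only Proposition \ref{prop-l'g/l} (valid for all $n$, as $\frac{\lambda'g}{\lambda}$ is globally increasing), the log-convexity of $h$, inequality \eqref{eq-g}, and the IMCF asymptotics; combining it with \eqref{eq-upperbound-steklov} gives \eqref{eq-uppbdimcf}. Then, exactly as in Lemma \ref{main lem}, one bounds $h(\int_\Omega\tfrac{\lambda'g}{\lambda})^2$ from below using the Mean Value Theorem, the monotonicity of $r\mapsto h\,h'$ evaluated at $\int_{B(r)}\tfrac{\lambda'g}{\lambda}$, the bound $g'(R)\le\tfrac13$ (concavity of $g$ together with $\lim_{r\to0}g'=\tfrac13$ from Proposition \ref{prop-g}), and \eqref{eq-afterdeccondi}, to get $|\partial\Omega|\,\sigma_1(\Omega)\le |S(R)|\,\sigma_1(B(R))$. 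Finally the isoperimetric inequality (Theorem \ref{thm-isoperimetric inequality}) shows the radius of $\Omega^{\ast}$ is at least $R$, and since $|S(r)|\sigma_1(B(r))=\omega_{n-1}\lambda^{n-1}\frac{g'}{g}$ is (strictly) increasing in $r$ for $n\ge3$ by Lemma \ref{lem-ln-1g'/g} and Proposition \ref{prop-l2g'/g-n=4}, we conclude $\sigma_1(\Omega)\le\frac{|S(R)|\sigma_1(B(R))}{|\partial\Omega|}\le\frac{|\partial\Omega^{\ast}|\sigma_1(\Omega^{\ast})}{|\partial\Omega|}=\sigma_1(\Omega^{\ast})$, with equality forcing the radius of $\Omega^{\ast}$ to be $R$, hence $\Omega$ a geodesic ball by the rigidity in Theorem \ref{thm-isoperimetric inequality}.

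The main obstacle is precisely the failure of Lemma \ref{lem-n=5} for $n=3$: there $\frac{\lambda\lambda'g'}{g}$ is unbounded, so the sign condition \eqref{eq-deccondi} cannot hold on all of $[0,\infty)$, and without it the key quantity need not be monotone globally. The resolution rests on the observation that this monotonicity is used only through mass transplantation against integrals over $\Omega$ and $B(R)$, so it suffices to have it on one ball containing both; the center-of-mass normalization (already built into the method) together with the circumscribed-radius hypothesis is exactly what confines $\Omega$, and hence $B(R)$, inside $B(2R_0)$. The only points needing care are the verifications that restricting to this ball disturbs neither the mass transplantation step (which only sees radii $\le 2R_0$ after the harmless constant extension) nor the IMCF estimate of Corollary \ref{cor-int-g} (whose proof never invokes $n\ge4$, even though the flow itself leaves $B(2R_0)$).
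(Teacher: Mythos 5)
Your proposal is correct and follows essentially the same route as the paper's own (remark-level) argument: fix $R_0$ by $\left.\tfrac{\lambda\lambda'g'}{g}\right|_{r=2R_0}=\tfrac52$, use the center-of-mass normalization plus the circumscribed-radius bound to confine $\Omega$ in $B(2R_0)$ so that the analogue of Lemma \ref{lem-n=5} applies, and then repeat subsection \ref{subsection4.3} verbatim. The extra details you supply (constant extension beyond $2R_0$ for mass transplantation, and checking Corollary \ref{cor-int-g} needs no dimension restriction) are correct refinements of the same proof.
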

We believe that $R_0$ in the proposition can be $+\infty$, i.e. Theorem \ref{main thm} also holds for $n=3$.
\end{rmk}

\subsection{Sketch proof of Corollary \ref{main cor}}$\ $

As in \cite{Ver21}, after choosing a new center of mass, using variational characterization of $\sigma_i(\Omega),1\leq i\leq n$ and substituting terms, it holds that
\begin{align}\label{eq-cor-1}
\int_{\Sigma}g(r)^2d\mu\leq \frac{1}{n-1}\sum_{i=1}^{n-1}\frac{1}{\sigma_i(\Omega)}\int_{\Omega}((g')^2+(n-1)\frac{g^2}{\lambda^2} ) dv.
\end{align}
We have already shown in the proof of Theorem \ref{main thm} that
\begin{align}\label{eq-cor-2}
\frac{\int_{\Omega}((g')^2+(n-1)\frac{g^2}{\lambda^2}) dv}{\int_{\Sigma}g^2 d\mu}\leq \sigma_1(\Omega^{\ast}).
\end{align}
So Corollary \ref{main cor} follows from \eqref{eq-cor-1}, \eqref{eq-cor-2} and $\sigma_1(\Omega^{\ast})=\sigma_2(\Omega^{\ast})=\cdots=\sigma_n(\Omega^{\ast})$.\qed

\begin{bibdiv}
\begin{biblist}

\bib{AS96}{article}{
   author={Aithal, A. R.},
   author={Santhanam, G.},
   title={Sharp upper bound for the first non-zero Neumann eigenvalue for
   bounded domains in rank-$1$ symmetric spaces},
   journal={Trans. Amer. Math. Soc.},
   volume={348},
   date={1996},
   number={10},
   pages={3955--3965},
}

\bib{AV22}{article}{
   author={Anoop, T. V.},
   author={Verma, Sheela},
   title={Szeg\"{o}-Weinberger type inequalities for symmetric domains in simply
   connected space forms},
   journal={J. Math. Anal. Appl.},
   volume={515},
   date={2022},
   number={2},
   pages={Paper No. 126429, 16},
}

\bib{Bre13}{article}{
   author={Brendle, Simon},
   title={Constant mean curvature surfaces in warped product manifolds},
   journal={Publ. Math. Inst. Hautes \'{E}tudes Sci.},
   volume={117},
   date={2013},
   pages={247--269},
}

\bib{Bro01}{article}{
   author={Brock, F.},
   title={An isoperimetric inequality for eigenvalues of the Stekloff
   problem},
   journal={ZAMM Z. Angew. Math. Mech.},
   volume={81},
   date={2001},
   number={1},
   pages={69--71},
}

\bib{BFNT21}{article}{
   author={Bucur, Dorin},
   author={Ferone, Vincenzo},
   author={Nitsch, Carlo},
   author={Trombetti, Cristina},
   title={Weinstock inequality in higher dimensions},
   journal={J. Differential Geom.},
   volume={118},
   date={2021},
   number={1},
   pages={1--21},
}

\bib{BS14}{article}{
   author={Binoy},
   author={Santhanam, G.},
   title={Sharp upperbound and a comparison theorem for the first nonzero
   Steklov eigenvalue},
   journal={J. Ramanujan Math. Soc.},
   volume={29},
   date={2014},
   number={2},
   pages={133--154},
}

\bib{CGGS24}{article}{
   author={Colbois, Bruno},
   author={Girouard, Alexandre},
   author={Gordon, Carolyn},
   author={Sher, David},
   title={Some recent developments on the Steklov eigenvalue problem},
   journal={Rev. Mat. Complut.},
   volume={37},
   date={2024},
   number={1},
   pages={1--161},
}

\bib{Esc99}{article}{
   author={Escobar, Jos\'{e} F.},
   title={An isoperimetric inequality and the first Steklov eigenvalue},
   journal={J. Funct. Anal.},
   volume={165},
   date={1999},
   number={1},
   pages={101--116},
}

\bib{FL21}{article}{
   author={Freitas, Pedro},
   author={Laugesen, Richard S.},
   title={From Neumann to Steklov and beyond, via Robin: the Weinberger way},
   journal={Amer. J. Math.},
   volume={143},
   date={2021},
   number={3},
   pages={969--994},
}

\bib{FS11}{article}{
   author={Fraser, Ailana},
   author={Schoen, Richard},
   title={The first Steklov eigenvalue, conformal geometry, and minimal
   surfaces},
   journal={Adv. Math.},
   volume={226},
   date={2011},
   number={5},
   pages={4011--4030},
}

\bib{FS19}{article}{
   author={Fraser, Ailana},
   author={Schoen, Richard},
   title={Shape optimization for the Steklov problem in higher dimensions},
   journal={Adv. Math.},
   volume={348},
   date={2019},
   pages={146--162},
}

\bib{Ger11}{article}{
   author={Gerhardt, Claus},
   title={Inverse curvature flows in hyperbolic space},
   journal={J. Differential Geom.},
   volume={89},
   date={2011},
   number={3},
   pages={487--527},
}

\bib{GP12}{article}{
   author={Girouard, Alexandre},
   author={Polterovich, Iosif},
   title={Upper bounds for Steklov eigenvalues on surfaces},
   journal={Electron. Res. Announc. Math. Sci.},
   volume={19},
   date={2012},
   pages={77--85},
}

\bib{GP17}{article}{
   author={Girouard, Alexandre},
   author={Polterovich, Iosif},
   title={Spectral geometry of the Steklov problem (survey article)},
   journal={J. Spectr. Theory},
   volume={7},
   date={2017},
   number={2},
   pages={321--359},
}

\bib{Kar17}{article}{
   author={Karpukhin, Mikhail},
   title={Bounds between Laplace and Steklov eigenvalues on nonnegatively
   curved manifolds},
   journal={Electron. Res. Announc. Math. Sci.},
   volume={24},
   date={2017},
   pages={100--109},
}

\bib{KV14}{article}{
   author={Kokarev, Gerasim},
   title={Variational aspects of Laplace eigenvalues on Riemannian surfaces},
   journal={Adv. Math.},
   volume={258},
   date={2014},
   pages={191--239},
}

\bib{KW23}{article}{
   author={Kwong, Kwok-Kun},
   author={Wei, Yong},
   title={Geometric inequalities involving three quantities in warped
   product manifolds},
   journal={Adv. Math.},
   volume={430},
   date={2023},
   pages={Paper No. 109213, 28},
}

\bib{Sch43}{article}{
   author={Schmidt, Erhard},
   title={Beweis der isoperimetrischen Eigenschaft der Kugel im
   hyperbolischen und sph\"{a}rischen Raum jeder Dimensionenzahl},
   language={German},
   journal={Math. Z.},
   volume={49},
   date={1943},
   pages={1--109},
}

\bib{Ver21}{article}{
   author={Verma, Sheela},
   title={An isoperimetric inequality for the harmonic mean of the Steklov
   eigenvalues in hyperbolic space},
   journal={Arch. Math. (Basel)},
   volume={116},
   date={2021},
   number={2},
   pages={193--201},
}

\bib{Wei54}{article}{
   author={Weinstock, Robert},
   title={Inequalities for a classical eigenvalue problem},
   journal={J. Rational Mech. Anal.},
   volume={3},
   date={1954},
   pages={745--753},
}

\bib{Wei54-2}{article}{
   author={Weinstock, Robert},
   title={Inequalities for a classical eigenvalue problem},
   journal={J. Rational Mech. Anal.},
   volume={3},
   date={1954},
   pages={745--753},
}

\bib{Wei56}{article}{
   author={Weinberger, H. F.},
   title={An isoperimetric inequality for the $N$-dimensional free membrane
   problem},
   journal={J. Rational Mech. Anal.},
   volume={5},
   date={1956},
   pages={633--636},
}

\bib{YY17}{article}{
   author={Yang, Liangwei},
   author={Yu, Chengjie},
   title={A higher dimensional generalization of Hersch-Payne-Schiffer
   inequality for Steklov eigenvalues},
   journal={J. Funct. Anal.},
   volume={272},
   date={2017},
   number={10},
   pages={4122--4130},
}

\end{biblist}
\end{bibdiv}

\end{document}